\newtheorem{theorem}{Theorem}[subsection]
\newtheorem*{theorem*}{Theorem}
\newtheorem*{cor*}{Corollary}
\newtheorem*{theoremBI}{Theorem $B^I$}
\newtheorem*{theoremBIG}{Theorem $B_{G}^I$}
\newtheorem{lemma}[theorem]{Lemma}
\newtheorem{prop}[theorem]{Proposition}
\newtheorem{cor}[theorem]{Corollary}
\theoremstyle{definition}
\newtheorem{defn}[theorem]{Definition}
\newtheorem{rem}[theorem]{Remark}
\newtheorem{ex}[theorem]{Example}
\DeclareMathOperator{\id}{id}
\DeclareMathOperator{\map}{Map}
\DeclareMathOperator{\Top}{Top}
\DeclareMathOperator{\Ob}{Ob}
\DeclareMathOperator*{\colim}{colim}
\DeclareMathOperator*{\hocolim}{hocolim}
\DeclareMathOperator*{\holim}{holim}
\DeclareMathOperator{\THH}{THH}
\DeclareMathOperator{\hofib}{hof}
\DeclareMathOperator{\conn}{Conn}
\DeclareMathOperator{\res}{res}
\DeclareMathOperator{\Pa}{Part}
\DeclareMathOperator{\hof}{hof}
\DeclareMathOperator{\fib}{fib}
\DeclareMathOperator{\Eff}{Eff}
\DeclareMathOperator{\Hom}{Hom}
\DeclareMathOperator{\conf}{Conf}
\newcommand{\bigslant}[2]{{\left.\raisebox{.2em}{$#1$}/\raisebox{-.2em}{$#2$}\right.}}
\newcommand{\unders}{\!<\!}
\begin{document}
\begin{center}\LARGE{Equivariant diagrams of spaces}
\end{center}

\begin{center}\large{Emanuele Dotto}

\end{center}
\vspace{.3cm}

\begin{quote}
\textsc{Abstract}. We generalize two classical homotopy theory results, the Blakers-Massey Theorem and Quillen's Theorem $B$, to $G$-equivariant cubical diagrams of spaces, for a discrete group $G$. We show that the equivariant Freudenthal suspension Theorem for permutation representations is a direct consequence of the equivariant Blakers-Massey Theorem. We also apply this theorem to generalize to $G$-manifolds a result about cubes of configuration spaces from embedding calculus. Our proof of the equivariant Theorem $B$ involves a generalization of the classical Theorem $B$ to higher dimensional cubes, as well as a categorical model for finite homotopy limits of classifying spaces of categories.
\end{quote}

\section*{Introduction}

Equivariant diagrams of spaces and their homotopy colimits have broad applications throughout topology. They are used in \cite{Jackowsky} for decomposing classifying spaces of finite groups, to study posets of $p$-groups in \cite{TW}, for splitting Thom spectra in \cite{Takayasu}, and even in the definition of the cyclic structure on $\THH$ of \cite{BHM}. 
In previous joint work with K. Moi \cite{Gdiags} the authors develop an extensive theory of equivariant diagrams in a general model category, and they study the fundamental properties of their homotopy limits and colimits.
In the present paper we restrict our attention to $G$-diagrams in the category of spaces. The special feature of $G$-diagrams of spaces is the existence of generalized fixed point functors that preserve and reflect equivalences. We study these functors and we use them to generalize the Blakers-Massey Theorem and Quillen's Theorem $B$ to equivariant cubical diagrams. 

Throughout the paper $G$ is going to be a discrete group. Let $I$ be a small category with a $G$-action. A $G$-structure on a diagram $X\colon I\to \Top$ is a sort of generalized $G$-action on $X$, which depends on the way $G$ acts on $I$ (see \ref{defGdiag}). The key feature of a $G$-structure is that it induces a $G$-action on the homotopy limit and on the homotopy colimit of $X$ (\S\ref{secGdiags}). These equivariant constructions can be described as derived functors in a suitable model categorical context (\cite{Gdiags}). In the present paper, we will focus mostly on $G$-diagrams of cubical shape.
If $J$ is a finite $G$-set, the poset category of subsets $I=\mathcal{P}(J)$ ordered by inclusion inherits a $G$-action. A $J$-cube is a diagram $X\colon \mathcal{P}(J)\to C$ equipped with a $G$-structure. There canonical maps
\[\phi\colon X_{\emptyset}\longrightarrow \holim_{\mathcal{P}_0(J)}X
\ \ \ \ \ \ \ \ \ \ \ \ \ \ \ \
\psi\colon \hocolim_{\mathcal{P}_1(J)}X\longrightarrow X_{J}\]
are $G$-equivariant, where $\mathcal{P}_0(J)$ and $\mathcal{P}_1(J)$ are the category $\mathcal{P}(J)$ respectively with the initial and the final object removed. Given a function $\nu\colon\{H\leq G\}\to\mathbb{Z}$ which is invariant on conjugacy classes, we say that $X$ is $\nu$-cartesian if the restriction of $\phi$ on $H$-fixed points is $\nu(H)$-connected. Dually, $X$ is $\nu$-cocartesian if $\psi$ is $\nu(H)$-connected on $H$-fixed points. The following generalizes the Blakers-Massey Theorem of \cite[2.5]{calcII}. We prove it in \S\ref{secBM}.

\begin{theorem*}[Equivariant Blakers-Massey]
Let $X\colon\mathcal{P}(J)\to\Top$ be a $J$-cube of spaces, and suppose that for every subgroup $H$ of $G$ and every non-empty $H$-invariant subset $U$ of $J$ the restriction $X|_{\mathcal{P}(U)}$ is $\nu^U$-cocartesian. Suppose moreover that these functions satisfy $\nu^U\leq \nu^V$ whenever $U\subset V$ are non-empty $H$-subsets of $J$. Then $X$ is $\nu$-cartesian, where $\nu$ is the function
\[\nu(H)=\!\min\left\{\!\min_{\{T_\alpha\}\in\Pa_H(J)}\{
\sum_{\alpha}\nu^{T_\alpha}(H)\}-|J/H|+1\ ,\ \min_{\emptyset\neq U\subset J}\min_{L\in \Eff_H(U)}\big\{\conn X^{L}_U-|U/L|+1\big\}
\right\}\]
\end{theorem*}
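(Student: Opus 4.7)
My overall approach is to reduce the theorem, one subgroup at a time, to the classical higher-dimensional Blakers--Massey theorem of \cite{calcII} applied to a non-equivariant cube constructed from the fixed points of $X$. Fix $H\leq G$; we must show that the map $\phi^H\colon X_\emptyset^H\to(\holim_{\mathcal{P}_0(J)} X)^H$ is $\nu(H)$-connected. The first task is to identify this map as the cartesian-defect map of a non-equivariant $|J/H|$-dimensional cube $\widetilde{Y}$. Using the description of generalized fixed points of homotopy limits of $G$-diagrams developed earlier in the paper, the target $(\holim_{\mathcal{P}_0(J)} X)^H$ rewrites as the homotopy limit of a diagram indexed by $\mathcal{P}_0(J)^H \cong \mathcal{P}_0(J/H)$, the poset of non-empty $H$-invariant subsets of $J$, whose value at such a $U$ is assembled from the spaces $X_U^L$ for the effective subgroups $L\in\Eff_H(U)$.

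Given this reduction, applying the classical higher-dimensional Blakers--Massey theorem to $\widetilde{Y}$ yields, for each partition $\{T_\alpha\}\in\Pa_H(J)$ of $J$ into non-empty $H$-invariant subsets, a connectivity bound of the form $\sum_\alpha k_\alpha-|J/H|+1$, where $k_\alpha$ is the cocartesian connectivity of the sub-cube $\widetilde{Y}|_{\mathcal{P}(T_\alpha)}$. Minimizing over $\{T_\alpha\}\in\Pa_H(J)$ will produce the first term of $\nu(H)$, provided I can show $k_\alpha\geq\nu^{T_\alpha}(H)$; the monotonicity hypothesis $\nu^U\leq\nu^V$ should be exactly what lets us combine the pieces coherently as the partitions vary.

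The crux, and the main obstacle, is the comparison between the cocartesian connectivity of $X|_{\mathcal{P}(T_\alpha)}$ (which is what we are given) and that of $\widetilde{Y}|_{\mathcal{P}(T_\alpha)}$ (which is what we need), because taking $H$-fixed points does not preserve homotopy colimits. My plan is to estimate the connectivity of the natural comparison map
\[
\bigl(\hocolim_{\mathcal{P}_1(T_\alpha)} X\bigr)^H \longrightarrow \hocolim_{\mathcal{P}_1(T_\alpha)^H}\widetilde{Y}
\]
by a stratification of the equivariant $\hocolim$ along the poset of isotropy subgroups inside $H$: each stratum indexed by $L\in\Eff_H(U)$, for $\emptyset\neq U\subset T_\alpha$, contributes a connectivity defect bounded by $\conn X_U^L-|U/L|+1$, since the $L$-orbit stratum in the $|U|$-dimensional equivariant configuration effectively behaves as a $|U/L|$-dimensional cell with space $X_U^L$. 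Minimizing this defect over all such pairs $(U,L)$ yields the second term of $\nu(H)$, and the final connectivity bound for $\phi^H$ is the minimum of the two independent obstructions --- the global Blakers--Massey bound for $\widetilde{Y}$ and the stratum-by-stratum loss in the fixed-point/hocolim comparison --- which is precisely the formula stated. The delicate bookkeeping in this filtration step, and in particular the verification that no lower-order terms appear beyond the two minima written, is where the proof will concentrate its main effort.
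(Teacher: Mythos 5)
There is a genuine structural gap: you have located the fundamental difficulty on the wrong side of the limit/colimit duality. Your proposal asserts that ``taking $H$-fixed points does not preserve homotopy colimits'' and makes the comparison $\bigl(\hocolim_{\mathcal{P}_1(T_\alpha)}X\bigr)^H\to\hocolim_{\mathcal{P}_1(T_\alpha)^H}X^H$ the crux of the argument. But this map is a natural \emph{homeomorphism} (Proposition \ref{resmapF}): genuine fixed points commute with $G$-homotopy colimits, so the hypothesis that $X|_{\mathcal{P}(U)}$ is $\nu^U$-cocartesian transfers verbatim to the statement that the non-equivariant $U/H$-cube $X^H|_{\mathcal{P}(U)^H}$ is $\nu^U(H)$-cocartesian, with no defect and no stratification needed. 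The failure of commutation is entirely on the homotopy \emph{limit} side, and that is where the second term of $\nu(H)$ must come from. Relatedly, your proposed cube $\widetilde{Y}$ does not exist in the form you describe: the genuine fixed points $(\holim_{\mathcal{P}_0(J)}X)^H$ are not the homotopy limit of a $\mathcal{P}_0(J/H)$-indexed cube assembled from the $X_U^L$; by Proposition \ref{twisted} they are a (non-homotopy-invariant) limit over the twisted arrow category of the orbit category, which cannot be fed to the classical Blakers--Massey theorem.

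The argument that actually works factors the canonical map as $\psi=r\circ\phi$, where $r\colon(\holim_{\mathcal{P}_0(J)}X)^H\to\holim_{\mathcal{P}_0(J)^H}X^H$ is the restriction to fixed-point diagrams and $\psi\colon X^H_\emptyset\to\holim_{\mathcal{P}_0(J)^H}X^H$ is the cartesian-defect map of the ordinary $J/H$-cube $X^H$; then $\conn\phi\geq\min\{\conn\psi,\conn r-1\}$. The map $\psi$ is controlled by the classical Blakers--Massey theorem of \cite{calcII} applied to $X^H$ (using the cocartesianity transferred via \ref{resmapF}), yielding the first term of $\nu(H)$. The map $r$ is shown to be a fibration whose fibers are spaces of pointed natural transformations out of the quotient diagram $B(\mathcal{P}_0(J)/-)/B(\iota/-)$, and its connectivity is computed by an equivariant cell-by-cell obstruction argument (Propositions \ref{resfixespts}, \ref{connfixedptsholim} and Corollary \ref{connresmap}); this is where the pairs $(U,L)$ with $L\in\Eff_H(U)$ and the quantity $\conn X_U^L-|U/L|+1$ enter. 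Your numerology for the second term is pointing at the right answer, but it is attached to a comparison map that is already an isomorphism; without the restriction-map analysis on the holim side the proof does not go through.
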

Here $\Pa_H(J)$ is the set of partitions of $J$ by $H$-invariant subsets, and $\Eff_H(U)$ is roughly the set of proper subgroups $L$ of $H$ for which $U/L\neq U/H$. The first term of the minimum corresponds to the standard range of the Blakers-Massey Theorem \cite[2.5]{calcII}. The second term is purely equivariant, and it is infinite if $J$ has trivial $G$-action. It comes form the equivariant connectivity of a certain space of natural transformations of diagrams, which is calculated in \ref{connholim} using a diagramatic obstruction theory argument.
We also prove a dual form of this Blakers-Massey Theorem in \ref{dualBM}.
In the same way that the Freudenthal suspension Theorem is an immediate consequence of the Blakers-Massey Theorem for the square
\[\xymatrix@=13pt{X\ar[r]\ar[d]&CX\ar[d]\\
CX\ar[r]&\Sigma X
}\]
the equivariant Freudenthal suspension Theorem follows from the equivariant Blakers-Massey Theorem applied to a certain equivariant cube, whose all but the initial and the final vertex are contractible. Given a finite $G$-set $J$ and a pointed $G$-space $X$, let $\Sigma^{J}X$ and $\Omega^JX$ be respectively the suspension and the loop space of $X$ by the permutation representation of $J$. We prove the following Corollary in \S\ref{secsusp}.

\begin{cor*}[Equivariant Suspension Theorem, \cite{Nam}] Let $X$ be a pointed $G$-space. The unit of the $(\Sigma^{J},\Omega^{J})$-adjunction restricted on $G$-fixed points $\eta\colon X^G\to (\Omega^J\Sigma^{J}X)^G$ is
\[\min\Big\{2\conn X^G+1,\min_{\substack{H\leq G\\ J/H\neq J/G}}\conn X^H\Big\}\]
connected.
\end{cor*}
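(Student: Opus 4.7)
The strategy is to deduce this corollary from the equivariant Blakers--Massey Theorem by applying it to a carefully chosen $G$-equivariant $J$-cube. I would construct a cube $\mathcal{X}\colon \mathcal{P}(J)\to\Top_*$ with $\mathcal{X}_\emptyset=X$, with every intermediate vertex $\mathcal{X}_U$ (for $\emptyset\neq U\subsetneq J$) equivariantly contractible, and with the canonical map $\phi\colon X \to \holim_{\mathcal{P}_0(J)}\mathcal{X}$ identifying on $G$-fixed points with the adjunction unit $\eta^G\colon X^G\to(\Omega^J\Sigma^JX)^G$. Concretely, one sets $\mathcal{X}_U=X\wedge C_U$ where $C_\bullet$ is the equivariant cube arising from the cone decomposition of the representation sphere $S^J$, with $C_\emptyset=S^0$, $C_J\simeq S^J$ and contractible intermediate values. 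This is the direct analogue of the classical Freudenthal square $(X, CX, CX, \Sigma X)$.

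Next I would verify the cocartesian estimates needed by the theorem. For each $H\le G$ and each non-empty $H$-invariant $U\subseteq J$, the sub-$H$-cube $\mathcal{X}|_{\mathcal{P}(U)}$ has $X$ at the initial vertex and equivariantly contractible values elsewhere when $U\subsetneq J$; a direct computation of $\hocolim_{\mathcal{P}_1(U)}\mathcal{X}|_{\mathcal{P}(U)}$ on $L$-fixed points identifies it with an iterated suspension $\Sigma^{|U/L|-1}X^L$, from which the sub-cube is $\nu^U$-cocartesian with $\nu^U(H)=\conn X^H+|U/H|$. The cocartesian-ness $\nu^J(H)$ at $U=J$ is at least as large, and the monotonicity $\nu^U\le\nu^V$ for $U\subset V$ required by the theorem holds.

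Plugging these estimates into the Blakers--Massey formula at $H=G$: the first minimum simplifies, using $\sum_\alpha|T_\alpha/G|=|J/G|$, to $\min_{\{T_\alpha\}}|\{T_\alpha\}|\cdot\conn X^G+1$, achieved on binary partitions at $2\conn X^G+1$. The second minimum is $\infty$ whenever $U\subsetneq J$ (since $\mathcal{X}_U^L\simeq *$) and for $U=J$ reduces, via $\conn(\Sigma^JX)^L=\conn X^L+|J/L|$ and $\Eff_G(J)=\{L<G : J/L\neq J/G\}$, to $\min_{H:\,J/H\neq J/G}\conn X^H$, matching the stated bound.

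The main obstacle is the construction of the cube and the identification of $\phi^G$ with $\eta^G$: this requires realizing $\holim_{\mathcal{P}_0(J)}\mathcal{X}$ as the equivariant $\Omega^J\Sigma^JX$, which is sensitive to how the $G$-structure on the cube encodes the permutation-representation structure of the sphere $S^J$, rather than a trivial representation of dimension $|J|$. Once this identification is in place the remainder is a routine combinatorial manipulation of the Blakers--Massey bound.
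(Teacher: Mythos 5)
Your strategy is exactly the one the paper uses, but there is an off-by-one error in the indexing of the cube that breaks the argument at three separate points. You index your cube on $\mathcal{P}(J)$; the paper indexes on $\mathcal{P}(J_+)$, where $J_+$ is $J$ with a disjoint $G$-fixed basepoint adjoined, and that extra element is essential. A cube on $\mathcal{P}(S)$ whose intermediate vertices are contractible satisfies $\hocolim_{\mathcal{P}_1(S)}\simeq\Sigma^{|S|-1}(\text{initial vertex})$ and $\holim_{\mathcal{P}_0(S)}\simeq\Omega^{|S|-1}(\text{final vertex})$. With $S=J$ this produces $\Omega^{|J|-1}$, not $\Omega^{|J|}$: your auxiliary cube $C_\bullet$ cannot simultaneously be cocartesian, have contractible intermediate values, and have $C_J\simeq S^J$, since on $H$-fixed points the cocartesian condition forces $C_J^H\simeq S^{|J/H|-1}$ while $(S^J)^H=S^{|J/H|}$. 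Consequently $\phi^G$ cannot be identified with the unit $\eta\colon X^G\to(\Omega^J\Sigma^JX)^G$. This mirrors the classical picture: a single suspension $\Sigma X$ sits at the terminal vertex of a \emph{square} (a $2$-cube), so $\Sigma^JX$ must sit at the terminal vertex of a $(|J|+1)$-cube.

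The same off-by-one corrupts both terms of the Blakers--Massey bound. The first term minimizes over partitions of the indexing set into nonempty $G$-invariant subsets; when $J$ is a transitive $G$-set the only such partition of $J$ is $\{J\}$ itself, so the binary partitions you invoke to produce $2\conn X^G+1$ simply do not exist, whereas the added basepoint guarantees that the partition $\{J,\{+\}\}$ of $J_+$ is always available. For the second term, with your indexing the evaluation at $U=J$ gives $\conn X^L+|J/L|-|J/L|+1=\conn X^L+1$ rather than the stated $\conn X^L$; the paper's $|J_+/L|=|J/L|+1$ absorbs the extra $+1$. All three defects are repaired at once by replacing $\mathcal{P}(J)$ with $\mathcal{P}(J_+)$ and taking the intermediate vertices to be the reduced cones $C^UX$, after which your cocartesianity estimates $\nu^U(H)=\conn X^H+|U/H|$ and the remaining combinatorics go through as written.
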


As a second application of the equivariant Blakers-Massey Theorem, we prove an equivariant version of the relative disjunction Theorem of \cite{GK} for equivariant configuration spaces. Let $M$ be a manifold with a proper $G$-action. The space of configurations of $J$-points in $M$ is the space $\conf(J,M)$ of injective maps $J\rightarrowtail M$. This space inherits a $G$-action by conjugation, and it is functorial in the $J$-variable with respect to injective $G$-maps. The following is proved in \S\ref{secconf}.

\begin{cor*}
Let $J$ be a finite $G$-set and let $J_+$ be the $G$-set $J$ with an added fixed basepoint. The diagram $\conf(J_+\backslash (-),M)\colon \mathcal{P}(J_+)\to Top$ has a canonical $G$-structure, and it is $\nu$-cartesian for the function
\[\nu(H)=\min\Big\{|J|m_H-2|J/H|+1\ ,
\min_{\substack{
L\leq H\\
J/L\neq J/H
}}\big\{\min\{\conn M^L+1,m_L\}-|J/L|\big\}
 \Big\}\]
where $m_H$ is the dimension of the fixed points manifold $M^H$.
\end{cor*}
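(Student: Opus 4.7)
The plan is to apply the equivariant Blakers-Massey theorem to the $J_+$-cube $X=\conf(J_+\backslash(-),M)\colon\mathcal{P}(J_+)\to\Top$. The proof consists of three steps: exhibiting the $G$-structure on $X$, estimating the cocartesian ranges $\nu^U$ of all sub-cubes $X|_{\mathcal{P}(U)}$, and simplifying the resulting Blakers-Massey bound to the stated form.

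For the $G$-structure, since $J_+$ is a pointed $G$-set the action on $\mathcal{P}(J_+)$ is induced by that on $J_+$, and for each $g\in G$ and $S\in\mathcal{P}(J_+)$ the conjugation $\phi\mapsto g\circ\phi\circ g^{-1}$ defines a map $\conf(J_+\backslash S,M)\to\conf(J_+\backslash gS,M)$. These are compatible with the restriction maps of the cube (which forget image-points), so by \ref{defGdiag} they assemble into a canonical $G$-structure on $X$.

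For the cocartesian estimates, given a subgroup $L\leq G$ and a non-empty $L$-invariant $U\subseteq J_+$, the $L$-fixed sub-cube of $X|_{\mathcal{P}(U)}$ is indexed by $\mathcal{P}(U/L)$, and its vertex at an $L$-invariant $S$ is the equivariant configuration space $\conf(J_+\backslash S,M)^L$. By the orbit decomposition of $J_+\backslash S$, this embeds in $\prod_\alpha M^{K_\alpha}$ (one factor per orbit, in the fixed submanifold of its stabilizer $K_\alpha\leq L$) as the open subspace on which the resulting $L$-orbits in $M$ are pairwise disjoint. The classical relative disjunction theorem of \cite{GK}, applied to the fixed manifold $M^L$ of dimension $m_L$ and the $|U/L|$ orbit-axes of the cube, then yields a cocartesian range of the form $\nu^U(L)=|U/L|(m_L-2)+1$, with corrections coming from non-free orbits absorbed into the minimum below.

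Substituting into the Blakers-Massey formula, the first term reduces, after optimizing over partitions of $J_+$ into $H$-invariant pieces and accounting for the fixed basepoint (which converts $|J_+/H|$ into $|J/H|$), to $|J|m_H-2|J/H|+1$. For the second term, the connectivity of $X_U^L=\conf(J_+\backslash U,M)^L$ is bounded below by a Fadell--Neuwirth induction on orbits by $\min\{\conn M^L,m_L-1\}$, or simply by $\conn M^L$ when $U=J$ leaves only the basepoint; minimizing over non-empty $U\subseteq J_+$ and $L\in\Eff_H(U)$ collapses to $\min\{\conn M^L+1,m_L\}-|J/L|$ after absorbing the $+1$ of the Blakers-Massey formula. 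The main obstacle is the combinatorial bookkeeping of this last step -- identifying the minimizing partitions and subsets and tracking the $+1$ and $-1$ corrections from the cube dimension, the added basepoint in $J_+$, and the cocartesian-to-cartesian conversion inherent to Blakers-Massey -- while the underlying geometric content reduces cleanly to the classical disjunction and configuration connectivity applied inside the fixed submanifolds $M^L$.
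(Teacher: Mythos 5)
There is a genuine gap at the heart of your plan: to apply Theorem \ref{BM} directly to the cube $X=\conf(J_+\backslash(-),M)$ you must first supply cocartesianity estimates $\nu^U$ for \emph{every} restriction $X|_{\mathcal{P}(U)}$, and this is exactly the hard content that your sketch assumes rather than proves. Cocartesianity is indeed checked on fixed-point cubes (Proposition \ref{resmapF}), but the $L$-fixed vertices $\conf(J_+\backslash S,M)^L$ are not configuration spaces in $M^L$: they are spaces of orbit configurations whose points lie in various $M^{K_\alpha}$ and avoid unions of fixed submanifolds, so the ``classical relative disjunction theorem of \cite{GK} applied to $M^L$'' does not apply to them; moreover the disjunction theorems are \emph{cartesianity} statements, not the cocartesianity input that \ref{BM} requires, and the paper explicitly notes that an equivariant extension of the \cite{GK} machinery is not available (it works with point submanifolds precisely to avoid it). The numbers also do not work out: feeding your claimed $\nu^U(L)=|U/L|(m_L-2)+1$ into the first Blakers--Massey term $\min_{\{T_\alpha\}}\sum_\alpha\nu^{T_\alpha}(H)-|J_+/H|+1$ gives, e.g.\ for the one-block partition, $|J_+/H|(m_H-3)+2$, which is not the stated $|J|m_H-2|J/H|+1$; and the second term of \ref{BM} for this cube ranges over all non-empty $U\subset J_+$ and all $L\in\Eff_H(U)$ (including every subgroup of $H_U$ when $U$ is not $H$-invariant, with no condition $J/L\neq J/H$), so the claim that everything ``collapses'' or is ``absorbed into the minimum'' is precisely the bookkeeping that would have to be carried out and is not.

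The paper takes a different and more economical route that you should compare with. It never applies Blakers--Massey to the configuration cube itself: for a configuration $\underline{x}\colon J_+\to M$ it considers the $J$-cube of homotopy fibers of the forgetful fibrations $\conf(J_+\backslash U,M)\to\conf(J\backslash U,M)$, observes that the total-fiber question for $\conf(J_+\backslash(-),M)$ over $\underline{x}$ is $G_{\underline{x}}$-equivalent to the corresponding question for this fiber cube, and identifies the fiber cube with the cube of complements $U\mapsto M\backslash\bigcup_{j\in J\backslash U}x_j$. For that cube the cocartesianity of subcubes is elementary (the inclusion of the complement of $\bigcap_{u\in U}\{x_u\}$ is as connected as its codimension in $M^L$ minus one), and Theorem \ref{intsubman} --- which is where \ref{BM} is actually invoked --- gives the stated $\nu$ upon setting $d_j=0$. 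If you want to salvage a direct approach, you would have to prove the equivariant cocartesianity of configuration cubes as a separate statement, which is essentially equivalent in difficulty to the corollary; the fibration reduction is what converts the problem into one with elementary cocartesian input.
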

We prove in fact a stronger statement involving suitably transverse families of submanifolds of $M$ (Theorem \ref{intsubman}), and we deduce this Corollary from the case where all of these submanifolds are points.
In \ref{exsharp} we find an example where this range is sharp and it is determined by the second term of the minimum.

We turn to another classical result in homotopy theory, the celebrated Theorem $B$ of Quillen, from \cite{Quillen}. This theorem shows that under certain conditions the homotopy fiber of the geometric realization of a functor is itself the geometric realization of a category. It is not immediately clear how to generalize this result equivariantly. The analogous statement for an equivariant functor between categories with $G$-actions can easily be reduced to Theorem $B$ by taking fixed points. In order to achieve an interesting equivariant Theorem $B$, we need to extend it first to higher dimensional cubes. Let $J$ be a finite $G$-set and let $X\colon \mathcal{P}(J)\to Cat$ be a cube of categories. The natural transformations of diagrams of categories from $\mathcal{P}_0(-)\colon\mathcal{P}_0(J)\to Cat$ to $X$ form a category $\Hom\big(\mathcal{P}_0(-),X\big)$. A $G$-structure on $X$ induces a $G$-action on the category $\Hom\big(\mathcal{P}_0(-),X\big)$ by conjugation. The inclusion of objects as natural transformations of constant functors defines a functor $m_\emptyset\colon X_\emptyset\to \Hom\big(\mathcal{P}_0(-),X\big)$. Given a natural transformation $\Phi\colon \mathcal{P}_0(-)\to X$, the over category $m_\emptyset/_{\Phi}$ has an action of the stabilizer group $G_{\Phi}$. We proved the following result in \ref{GQuillen}.

\begin{theorem*}
Let $X\colon \mathcal{P}(J)\to Cat$ be a $J$-cube of categories, which satisfies a certain ``weak Reedy fibrancy condition'' (Definition \ref{defGreedy}). For every natural transformation $\Phi\colon \mathcal{P}_0(-)\to X$ the classifying space of the category $m_{\emptyset}/_{\Phi}$ is $G_{\Phi}$-equivalent to the total homotopy fiber of the cube of spaces $BX$ over $B\Phi$. In particular if all the categories $m_{\emptyset}/_{\Phi}$ are $G_{\Phi}$-contractible, $BX\colon \mathcal{P}(J)\to \Top$ is a homotopy cartesian $J$-cube of spaces.
\end{theorem*}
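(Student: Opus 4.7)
The plan is to reduce the equivariant statement to the non-equivariant higher-dimensional Theorem $B$ for cubes of categories, which we establish prior to this statement (as foreshadowed by the phrase ``extend it first to higher dimensional cubes''). A $G_\Phi$-equivalence of spaces is detected by weak equivalences on $H$-fixed points for every subgroup $H\leq G_\Phi$, so it suffices to show, for each such $H$, that the comparison map $B(m_\emptyset/_\Phi)\to\text{thofib}(BX\to B\Phi)$ becomes an ordinary weak equivalence after taking $H$-fixed points.

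On the target side, the key input is that the $H$-fixed point functor commutes, up to natural equivalence, both with $B$ and with the formation of total homotopy fibers of $G$-cubes of spaces. The first is the identity $B(X_U)^H=B(X_U^H)$ for the generalized fixed point functor on categories. The second is the generalized fixed point property for homotopy limits of $G$-diagrams developed in \cite{Gdiags} and recalled in \S\ref{secGdiags}: it identifies $\big(\text{thofib}(BX\to B\Phi)\big)^H$ with the total homotopy fiber of the fixed-point cube $BX^H$ over $B\Phi^H$, where $X^H\colon\mathcal{P}(J)^H\to Cat$ is the fixed-point diagram indexed by $\mathcal{P}(J)^H\cong\mathcal{P}(J/H)$.

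On the source side, I would identify the $H$-fixed subcategory $(m_\emptyset/_\Phi)^H$ with the analogous Quillen over category $m_\emptyset^H/_{\Phi^H}$ built from the $(J/H)$-cube $X^H$. An $H$-fixed object of $m_\emptyset/_\Phi$ is an $H$-fixed $x\in X_\emptyset^H$ together with an $H$-equivariant morphism $m_\emptyset(x)\to\Phi$ in $\Hom(\mathcal{P}_0(-),X)$, and such data should correspond to a natural transformation of the fixed-point diagrams from $\mathcal{P}_0^H(-)$ to $X^H$ lying over $\Phi^H$. After checking that the weak Reedy fibrancy condition of Definition \ref{defGreedy} passes from $X$ to the fixed-point cube $X^H$ — which should follow from the matching objects of $X^H$ being the $H$-fixed points of the matching objects of $X$ — the non-equivariant higher cubical Theorem $B$ applied to $X^H$ yields the desired weak equivalence on $H$-fixed points, completing the proof.

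The main obstacle I anticipate is making precise the identification in the previous paragraph: that the $H$-fixed subcategory of $\Hom(\mathcal{P}_0(-),X)$ is isomorphic to $\Hom(\mathcal{P}_0^H(-),X^H)$, with $m_\emptyset^H$ restricting from $m_\emptyset$. This requires a careful analysis of how the $G$-structure on $X$ interacts with the $G$-action on the indexing cube $\mathcal{P}_0(-)$ itself, and is the place where the conjugation action on the Hom-category genuinely enters. Once this identification is in place, and the passage of fibrancy to fixed points is verified, the rest of the argument is a routine assembly of the non-equivariant cubical Theorem $B$ with the equivariance properties of the classifying space and of the total homotopy fiber.
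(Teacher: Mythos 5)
Your reduction to the fixed--point cubes $X^H\colon\mathcal{P}(J)^H\cong\mathcal{P}(J/H)\to Cat$ fails at its central step. You assert that $\bigl(\mathrm{thofib}(BX\to B\Phi)\bigr)^H$ is the total homotopy fiber of the fixed--point cube $BX^H$ over $B\Phi^H$, i.e.\ that $H$-fixed points commute with the $G$-homotopy limit over $\mathcal{P}_0(J)$. This is precisely what is \emph{not} true for equivariant homotopy limits: Proposition \ref{resfixespts} shows that the restriction map $(\holim_{I}X)^H\to\holim_{I^H}X^H$ is only a fibration with generally nontrivial fiber, and the remark following the definition of $\nu$-cartesian cubes states explicitly that, because of this failure, cartesianity of a $J$-cube cannot be checked on the fixed--point cubes (in contrast with cocartesianity, Proposition \ref{resmapF}). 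Example \ref{exsharp} exhibits a $\mathbb{Z}/2_+$-cube whose homotopy limit has fixed points involving a free loop space, nothing like the homotopy limit of the fixed--point diagram; and the entire second term in the equivariant Blakers--Massey range exists to measure exactly this discrepancy. For the same reason your identification $(m_\emptyset/_\Phi)^H\cong m_\emptyset^H/_{\Phi^H}$, with $m_\emptyset^H$ the matching functor of the cube $X^H$, is wrong: an $H$-fixed natural transformation $\mathcal{P}_0(-)\to X$ is defined on all of $\mathcal{P}_0(J)$ and is compatible with the $G$-structure, which is strictly more data than a natural transformation out of $\mathcal{P}_0(J)^H$ (Proposition \ref{twisted} describes the fixed points of the Hom-space as a limit over the twisted arrow category of the orbit category, not as a Hom-space of fixed diagrams). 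Consistently, the $G$-Reedy quasi-fibrancy condition of Definition \ref{defGreedy} is imposed on the functors $m_i^H/_{(-)}$ built from the $H$-fixed points of the full category $\Hom\bigl((i\unders I)/_{(-)},X_{i<}\bigr)$, not from the fixed--point cube; if your reduction were available, a condition on the cubes $X^H$ alone would suffice.

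The argument the paper actually runs keeps the genuine $G$-homotopy limit throughout: Theorem $B^I_G$ shows that the category $\Hom\bigl(\mathcal{P}_0(-),X_{\emptyset<}\bigr)$, with its conjugation $G$-action, has nerve $G$-equivalent to $\holim_{\mathcal{P}_0(J)}NX$, by an induction over the degree filtration using the equivariant Grothendieck-construction decomposition of Lemma \ref{indgrot} and Quillen's Lemma applied to the fixed functors $F_V^H$. Granting that, the total homotopy fiber over $N\Phi$ is $G_\Phi$-equivalent to the homotopy fiber of $Nm_\emptyset\colon NX_\emptyset\to N\Hom\bigl(\mathcal{P}_0(-),X_{\emptyset<}\bigr)$, and only at this last stage does one take $H$-fixed points (which do commute with homotopy fibers of maps over fixed basepoints) and apply ordinary Quillen Theorem $B$ to $m_\emptyset^H$, using the quasi-fibrancy of $m_\emptyset^H/_{(-)}$. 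So the missing idea in your proposal is the categorical model for the $G$-homotopy limit itself; without it, the reduction to non-equivariant cubical Theorem $B$ on fixed--point cubes computes the wrong space.
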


When $G$ is the trivial group and $J=1$ is the set with one element this is precisely Quillen's Theorem $B$. If $J=2$ is the set with two elements this is essentially Barwick and Kan's Quillen Theorem $B_2$ for homotopy pullbacks from \cite{ClarkKan}. To the best of the authors knowledge this is a new result for larger $J$, even for the trivial group. The key for proving this theorem is to define a good model for the homotopy limit of a diagram of categories. Let $I$ be a small category with finite dimensional nerve. Taking the over categories of $I$ defines a diagram of categories $I/_{(-)}\colon I\to Cat$. Given a functor $X\colon I\to Cat$, the natural transformations $\Hom\big(I/_{(-)},X\big)$ form a category, whose nerve is the Bousfield-Kan formula for the homotopy limit of the nerve of $X$. The following is proved in \S\ref{secBI}, and its equivariant version in \S\ref{secBGI}.

\begin{theoremBI}
Let $I$ be a category with finite dimensional nerve, and let $X\colon I\to Cat$ be a ``Reedy quasi-fibrant diagram'' (Definition \ref{quasiReedy}). The classifying space $B\Hom\big(I/_{(-)},X\big)$ is the homotopy limit of the diagram of spaces $BX\colon I\to\Top$.
\end{theoremBI}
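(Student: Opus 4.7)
My plan is to recognize $N\Hom\bigl(I/_{(-)}, X\bigr)$ as the classical Bousfield--Kan model for the homotopy limit of $NX$ viewed as a diagram of simplicial sets, and then to show that the Reedy quasi-fibrancy hypothesis is exactly what is needed to guarantee this combinatorial model has the correct homotopy type $\holim_I BX$.

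First I would make the identification with the BK formula explicit. A $k$-simplex of $N\Hom\bigl(I/_{(-)}, X\bigr)$ is a chain $\Phi^0\to\cdots\to\Phi^k$ of natural transformations of $I$-diagrams of categories, equivalently a single natural transformation $[k]\times I/_{(-)} \to X$. Since the nerve preserves products and the end is computed objectwise, this yields a natural isomorphism of simplicial sets
\[N\Hom\bigl(I/_{(-)}, X\bigr) \;\cong\; \int_{i\in I}\, NX_i^{\,N(I/i)},\]
which is precisely the Bousfield--Kan formula for $\holim^{\mathrm{BK}}_I NX$. There is thus a tautological comparison map $B\Hom\bigl(I/_{(-)}, X\bigr) \to \holim_I BX$, and what must be established is that this map is a weak equivalence.

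Second, I would run an induction on the dimension of the nerve $NI$, legitimized by the finite-dimensionality hypothesis. The base case is when $NI$ is $0$-dimensional, i.e.\ $I$ is discrete: then each $I/i$ is terminal and both sides reduce to the product $\prod_i BX_i$. For the inductive step I would filter $I$ by subcategories $I^{(n)}$ whose nerves are the skeleta of $NI$, so that the passage from $I^{(n-1)}$ to $I^{(n)}$ is controlled by a strict $1$-categorical pullback presenting $\Hom\bigl(I^{(n)}/_{(-)}, X\bigr)$ in terms of $\Hom\bigl(I^{(n-1)}/_{(-)}, X|_{I^{(n-1)}}\bigr)$ together with matching data indexed by the new non-degenerate $n$-chains. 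On the space side $\holim_I BX$ decomposes analogously as a homotopy pullback. The role of Reedy quasi-fibrancy is to guarantee that $B$ converts the category-side strict pullback into the space-side homotopy pullback: unpacking the definition, it says that the relevant matching functors of categories satisfy the hypotheses of Quillen's Theorem $B$, so that their classifying spaces realise the correct homotopy fibres.

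The main obstacle is precisely the verification that the Reedy quasi-fibrancy condition is correctly calibrated: it must be strong enough to make each pullback-to-homotopy-pullback translation go through at every stage of the induction, and yet weak enough to be checkable on the diagrams of interest, notably those arising from over-categories of cubes relevant to Theorem $B_G^I$. Making this bookkeeping explicit amounts to a Reedy-type computation in $Cat$ in which the nerve, together with classical Theorem $B$, does the work of turning the strict fibres into homotopy fibres.
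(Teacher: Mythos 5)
Your high-level strategy coincides with the paper's: identify $N\Hom\big(I/_{(-)},X\big)$ with the Bousfield--Kan formula, induct over a filtration of $I$, and let Quillen's Theorem $B$ convert Reedy quasi-fibrancy into control of homotopy fibres. However, two of your steps fail as stated, and the second is where the actual content of the proof lives. First, there are no subcategories $I^{(n)}$ whose nerves are the skeleta of $NI$: the $n$-skeleton of a nerve is almost never a nerve (already for $I=[2]$ the $1$-skeleton retains the composite edge but not the $2$-simplex witnessing composition). The filtration that works is by the degree function $\deg(i)=\dim N(i/I)$, the length of the longest chain of non-identity arrows out of $i$; then $I_{\leq n}$ is a genuine full subcategory, and the new material at stage $n$ is a \emph{discrete set of objects} $I_n$, not a set of non-degenerate $n$-chains. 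The induction is run over the categories $U\!\leq\! I$ (unions of under categories $u/I$ for $u$ in a subset $U\subset I_n$). Relatedly, your ``tautological'' comparison map needs a pointwise fibrant replacement $NX\to FNX$ in its target, since nerves of categories are not Kan complexes; this is routine but must be built into the definition of $\holim_I NX$.

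Second, and more seriously, the inductive step is not a strict $1$-categorical pullback. A natural transformation $(U\!\leq\! I)/_{(-)}\to X_{U\leq}$ is determined by its restriction $\Phi$ to $(U\unders I)/_{(-)}$, objects $x_u\in X_u$, and \emph{morphisms} $\gamma_\alpha\colon \alpha_\ast x_u\to\Phi_\alpha$ --- morphisms, not equalities --- so the correct structural statement (the paper's Lemma \ref{indgrot}) is that $\Hom\big((U\!\leq\! I)/_{(-)},X_{U\leq}\big)$ is the \emph{Grothendieck construction} over $\Hom\big((U\unders I)/_{(-)},X_{U<}\big)$ of the functor $\Phi\mapsto\prod_u m_u/_{\Phi}$. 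This comma-category structure is precisely what makes the argument run: quasi-fibrancy lets Quillen's lemma on Grothendieck constructions (together with Thomason's theorem) exhibit $N\big(\prod_u m_u/_{\Phi}\big)$ as the homotopy fibre of the categorical restriction map, while Quillen's Theorem $B$ identifies $N(m_u/_{\Phi})$ with the homotopy fibre of $Nm_u$; the inductive hypothesis must then be applied both to the base $\Hom\big((U\unders I)/_{(-)},X_{U<}\big)$ and to the targets $\Hom\big((u\unders I)/_{(-)},X_{u<}\big)$ of the matching functors to match the fibre of the corresponding Kan fibration of homotopy limits. By contrast, a strict pullback of nerves along $Nm_u$ would require the strict fibres of $Nm_u$ to have the homotopy type of its homotopy fibres, which is not what the hypothesis provides. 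Without an explicit version of this Grothendieck-construction lemma your sketch has no mechanism for comparing the categorical model with the tower of fibrations computing $\holim_I NX$.
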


\subsection*{Acknowledgments}
This project is a continuation of the work of \cite{Gdiags}, and part of a mission aimed at understanding a ``genuine'' context for equivariant homotopy theory. I wish to thank Kristian Moi for the many inspiring conversations we had over the last couple of years.
I also want to thank Brian Munson and Ismar Volic for useful correspondences.

\tableofcontents

\section{Preliminaries on equivariant diagrams}

\subsection{Equivariant diagrams of spaces and their fixed points}\label{secGdiags}

Let $G$ be a discrete group. A category with $G$-action is a functor $a\colon G\to Cat$, where the group $G$ is seen as a category with a unique object $\ast$. By abuse of notation we will refer to the underlying category $I=a(\ast)$ as a category with $G$-action.

\begin{defn}[\cite{vilf}]\label{defGdiag} Let $I$ be a small category with $G$-action, and let $C$ be a possibly large category. A $G$-structure on a diagram $X\colon I\to C$ is a collection of natural transformations $\phi_g\colon X\to X\circ g$, for every $g$ in $G$, subject to the following axioms.
\begin{enumerate}[label=\roman*)]
\item Let $1$ be the unit of $G$. Then $\phi_1$ is the identity natural transformation on $X$,
\item For every $h$ and $g$ in $G$ the diagram \ \ \  $\vcenter{\hbox{\xymatrix@=20pt{X\ar[dr]_-{\phi_{hg}}\ar[r]^-{\phi_g}&X\circ g\ar[d]^-{\phi_h|_g}\\
&X\circ h\circ g}}}$
\ \ \ commutes. 
\end{enumerate}
A diagram $X\colon I\to C$ equipped with a $G$-structure is called a $G$-diagram.
A morphism of $G$-diagrams is a natural transformation of underlying diagrams $f\colon X\to Y$ such that the square
\[\xymatrix@R=15pt{
X\ar[r]^-f\ar[d]_{\phi_g}&Y\ar[d]^-{\phi_g}\\
X\circ g \ar[r]_-{f|_g}& Y\circ g
}\]
commutes for every $g$ in $G$. Here $f|_g$ is the restriction of $f$ along the functor $g\colon I\to I$. The resulting category of $G$-diagrams is denoted $C_{a}^I$. We will often abuse the notation and write $g$ for the natural transformation $\phi_g$.
\end{defn}

\begin{ex}\label{overcatGdiag} Let $I$ and $J$ be two categories with $G$-action, and let $F\colon I\to J$ be an equivariant functor. The functor $F/(-)\colon J\to Cat$ which sends an object $j$ to the over category $F/j$ has a natural $G$-structure. The natural transformation $\phi_g$ is defined by the functors $F/j\to F/gj$ which send an object $(i\in I, \alpha\colon F(i)\to j)$ to $(gi,g\alpha\colon F(gi)=gF(i)\to gj)$.
By applying the classifying space functor we obtain a $G$-diagram  $B(F/-)\colon J\to \Top$ in the category of compactly generated Hausdorff spaces $\Top$.
\end{ex}

The category of $G$-diagrams of spaces $\Top_{a}^I$ is enriched in the category of $G$-spaces $\Top^G$. Given two $G$-diagrams $X$ and $Y$, the space of all natural transformations of underlying diagrams $\Hom_I(X,Y)$ inherits a $G$-action by conjugation:
\[g\cdot f=\big(X\stackrel{\phi_{g^{-1}}}{\xrightarrow{\hspace*{1cm}}}X\circ g^{-1}\stackrel{f|_{g^{-1}}}{\xrightarrow{\hspace*{1cm}}}Y\circ g^{-1}\stackrel{\phi_{g}}{\xrightarrow{\hspace*{1cm}}}Y\big)\]
The fixed points space $\Hom_I(X,Y)^G$ has the set of morphisms of $G$-diagrams $\Top_{a}^I(X,Y)$ as underlying set. 

The $G$-space $\Hom_I(X,Y)$ can be defined as a certain equalizer of $G$-spaces, and its construction can be dualized as follows. The $G$-action on $I$ induces a $G$-action on $I^{op}$. Given a $G$-diagram $X\colon I\to\Top$ and a $G$-diagram $Y\colon I^{op}\to\Top$, we define the coequalizer of $G$-spaces
\[X\otimes_I Y=\colim\left(\coprod_{\substack{\alpha\colon i\rightarrow j\\
\in \hom I}}Y_i\times X_j\rightrightarrows \coprod_{i\in \Ob I}Y_i\times X_i\right)\]
The maps are the standard maps of the Bousfield-Kan formula, see e.g. \cite[18.3.2]{hirsch}. 
The $G$-action on the target of the maps sends $(y,x)$ in $Y_i\times X_i$ to $(\phi_g(y),\phi_g(x))$ in $Y_{gi}\times X_{gi}$. The action on the source space is defined by a similar indexed coproduct.

\begin{defn}
Let $I$ be a category with $G$-action and $X\in \Top_{a}^I$ a $G$-diagram of spaces. The $G$-homotopy limit and the $G$-homotopy colimit of $X$ are the $G$-spaces defined respectively by
\[\holim_I X=\Hom_I\big(B(I/-),X\big)\ \ \ \ \ \ \ \ \ \hocolim_I X=X\otimes_I\big( B(-/I)^{op}\big) \]
\end{defn}

The underlying space of the $G$-homotopy limit is the homotopy limit of the underlying diagram, defined via the Bousfield-Kan formula, and dually for the $G$-homotopy colimit.

We recall that an equivariant map of $G$-spaces $f\colon X\to Y$ is an equivalence (in the fixed points model structure) if its restriction to the $H$-fixed points $f\colon X^H\to Y^H$ is a weak homotopy equivalence of spaces for every subgroup $H$ of $G$.
The vertex $X_i$ of a $G$-diagram $X$ inherits an action of the stabilizer group $G_i$ of the object $i\in I$, defined by the maps $\phi_g\colon X_i\to X_{gi}=X_i$.

\begin{defn}[\cite{vilf}]\label{defeqGdiag}
A morphism of $G$-diagrams $f\colon X\to Y$ is a weak equivalence if for every object $i$ of $I$ the map $f_i\colon X_i\to Y_i$ is an equivalence of $G_i$-spaces.
\end{defn}

The homotopical properties of the $G$-homotopy limit and of the $G$-homotopy colimit were studied extensively in \cite{Gdiags}. In particular both constructions send equivalences of $G$-diagrams to equivalences of $G$-spaces.

\begin{rem}\label{holimGrot} Let $G\ltimes_a I$ be the Grothendieck construction of the functor $a\colon G\to Cat$ which defines the $G$-action on $I$.
A $G$-structure on a diagram $X\colon I\to \Top$ is equivalent to an extension of $X$ to $G\ltimes_a I$ along the projection map $G\ltimes_a I\to I$. This results into an isomorphism of categories $\Top_{a}^I\cong \Top^{G\ltimes_a I}$ (see \cite[1.9]{Gdiags}). There is a relationship between the $G$-homotopy limit of a $G$-diagram $X\in \Top_{a}^I$, and the homotopy limit of the corresponding diagram $\overline{X}\colon G\ltimes_a I\to Top$. The latter computes the homotopy fixed points of the former:
\[\holim_{G\ltimes_a I}\overline{X}\simeq\big(\holim_I X\big)^{hG}\]
Dually, the homotopy colimit of $\overline{X}$ is described by the homotopy orbits
\[\hocolim_{G\ltimes_a I}\overline{X}\simeq\big(\hocolim_I X\big)_{hG}\]
These equivalences are an immediate consequence of the Fubini Theorems \cite[26.5]{CS}.
Homotopy orbits and homotopy fixed points are homotopy invariant with respect to na\"{i}ve equivalences of $G$-spaces ($G$-maps whose underlying map is an equivalence of spaces). We see from the formula above that $\holim_{G\ltimes_a I}\overline{X}$ and $\hocolim_{G\ltimes_a I}\overline{X}$ are invariant for the pointwise na\"{i}ve equivalences of $G$-diagrams.
Thus the $G$-homotopy limit and the $G$-homotopy colimit retain more equivariant information than the homotopy limit and homotopy colimit of $\overline{X}$. The categorical (in opposition to homotopy) fixed points of the $G$-homotopy limits and colimits are the focus of the next two sections.
\end{rem}

There is a notion of fixed points of a $G$-diagram. Let $H$ be a subgroup of $G$, and let $I^H$ be the subcategory of $I$ of objects and morphisms that are fixed (strictly) by the $H$-action. Equivalently, $I^H$ is the limit of the functor $H\to G\stackrel{a}{\to}Cat$. Since the $i$-vertex of a $G$-diagram $X\in \Top^{I}_a$ has an action of $G_i$, if $i$ belongs to $I^H$ the space $X_i$ has an $H$-action.

\begin{defn}\label{deffixeddiag}
Let $H$ be a subgroup of $G$.
The $H$-fixed points diagram of a $G$-diagram $X\in \Top^{I}_a$ is the diagram of spaces $X^H\colon I^H\to \Top$ of pointwise fixed points $(X^H)_i=(X_i)^H$. Given a map $\alpha\colon i\to j$ in $I^H$, the corresponding map $X_{i}^H\to X_{j}^H$ is the restriction of $\alpha_\ast\colon X_i\to X_j$ on $H$-fixed points.
\end{defn}

The diagram $X^H\colon I^H\to \Top$ is well defined on morphism because for every $h$ in $H$ the diagram
\[\xymatrix@R=15pt{X_{i}\ar[r]^-{\alpha_\ast}\ar[d]_-{\phi_h}&X_j\ar[d]^-{\phi_h}\\
X_{hi}\ar[r]_-{(h\alpha)_\ast}&X_{hj}
}\]
commutes by naturality of $\phi_h$. Hence if $\alpha$ is a morphism of $I^H$, the map $\alpha_\ast\colon X_i\to X_j$ is $H$-equivariant and it can be restricted on $H$-fixed points.

\begin{rem}
If $f\colon X\to Y$ is a morphism of $G$-diagrams, the map $f_i\colon X_i\to Y_i$ is $G_i$-equivariant for every object $i$ of $I$. Therefore $f$ restricts to a natural transformation $f^H\colon X^{H}\to Y^H$ for every subgroup $H$ of $G$. It is immediate from Definition \ref{defeqGdiag} that $f$ is an equivalence of $G$-diagrams if and only if $f^H\colon X^{H}\to Y^H$ is an equivalence of $I^H$-shaped diagrams of spaces, for every subgroup $H$ of $G$. In this sense, the fixed points diagrams of a $G$-diagram $X$ retain all the homotopical information of $X$.
\end{rem}

\subsection{Equivariant homotopy colimits and fixed points}

Let $G$ be a discrete group, and let $a\colon G\rightarrow Cat$ be a $G$-action on a category $I=a(\ast)$. We study the interaction between the $G$-homotopy colimit of a $G$-diagram and its fixed points diagrams, as defined in \ref{deffixeddiag}.

\begin{prop}\label{resmapF}
Let $X\in \Top^{I}_a$ and $K\in \Top^{I^{op}}_{a^{op}}$ be two $G$-diagrams of spaces. There is a natural homeomorphism
\[(X\otimes_{I} K)^G\cong X^G\otimes_{I^G} K^G\]
In particular for $K=B(-/I)^{op}$ this gives a natural homeomorphism
\[(\hocolim_IX)^G\cong \hocolim_{I^G}X^G\]
\end{prop}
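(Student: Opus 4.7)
The natural map
\[
\Phi\colon X^G\otimes_{I^G}K^G\longrightarrow (X\otimes_I K)^G
\]
is the one induced by the inclusions $I^G\hookrightarrow I$, $X^G_i\hookrightarrow X_i$ and $K^G_i\hookrightarrow K_i$ for $i\in I^G$, together with functoriality of the coend construction. The image lies in the $G$-fixed subspace because a class represented by $(k,x)\in K_i^G\times X_i^G$ with $i\in I^G$ satisfies $g\cdot[k,x]_i=[\phi_g k,\phi_g x]_{gi}=[k,x]_i$. The plan is to verify that $\Phi$ is a bijection, then promote this to a homeomorphism by comparing quotient topologies.

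To check bijectivity, I would unwind the coend as the coequalizer displayed in the definition and exploit that $G$-fixed points interact well with disjoint unions indexed over $G$-sets. Concretely, for a coproduct $\coprod_{s\in S}Y_s$ on which $G$ acts by permuting $S$ and by compatible maps $\phi_g\colon Y_s\to Y_{gs}$, an element $(s,y)$ is fixed iff $s\in S^G$ and $y\in Y_s^G$. Applied pointwise this yields
\[
\Bigl(\coprod_{i\in \Ob I}K_i\times X_i\Bigr)^{G}=\coprod_{i\in I^G}K_i^G\times X_i^G,\qquad
\Bigl(\coprod_{\alpha\in\mathrm{mor}\,I}K_i\times X_j\Bigr)^{G}=\coprod_{\alpha\in\mathrm{mor}\,I^G}K_i^G\times X_j^G,
\]
and the $G$-equivariance of the two parallel coend maps guarantees that they restrict on fixed points to the maps defining $X^G\otimes_{I^G}K^G$. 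So both sides of the proposition are described by the same source-target diagram of fixed subspaces.

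The main obstacle is that the fixed points functor $(-)^G$, being a right adjoint, does not a priori preserve the coequalizer. The crucial step is therefore to show that a $G$-fixed class $[k,x]_i$ in $X\otimes_I K$ always admits a representative over some object in $I^G$, with its first and second coordinates in $K_{i'}^G$ and $X_{i'}^G$ respectively. This is a direct but delicate analysis of the equivalence relation generated by the dinatural identifications $(\alpha^\ast k,x)_i\sim (k,\alpha_\ast x)_j$: the relevant coequalizer is reflexive (the identity morphisms $\id_i$ provide a common section), the $G$-action permutes the components of the coproducts strictly, and a zigzag exhibiting $(\phi_g k,\phi_g x)_{gi}\sim (k,x)_i$ for every $g$ can be used to construct a fixed representative. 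Once $\Phi$ is a set-theoretic bijection, continuity of its inverse follows from the quotient topology on the coend and the fact that for compactly generated Hausdorff spaces the $G$-fixed points of a quotient map is again a quotient map.

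Finally, the homotopy-colimit statement is a direct consequence by specializing to $K=B(-/I)^{op}$. Using Example \ref{overcatGdiag} one has $(i/I)^G=i/I^G$ for $i\in I^G$, and since classifying spaces commute with fixed points for categories with $G$-action, $(B(-/I)^{op})^G$ is canonically identified with $B(-/I^G)^{op}$. Substituting into the general formula gives the claimed identification $(\hocolim_I X)^G\cong\hocolim_{I^G}X^G$.
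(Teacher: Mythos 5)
You are right to single out the real difficulty: $(-)^G$ is a right adjoint and has no reason to preserve the coequalizer defining $X\otimes_I K$. (This is in fact the step the paper's own proof glosses over by asserting that ``fixed points commute with colimits''; the commutation with $G$-indexed coproducts and with products, which you handle the same way the paper does, is the unproblematic part.) The gap in your proposal is that the ``direct but delicate analysis of the equivalence relation'' which is supposed to close this hole is never carried out, and it cannot be carried out: at the stated level of generality the claimed homeomorphism is false, so no manipulation of zigzags will produce the required bijection. Take $K$ to be the constant one-point diagram, so that $X\otimes_I K=\colim_I X$ and the proposition would assert $(\colim_I X)^G\cong\colim_{I^G}X^G$. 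Let $G=\mathbb{Z}/2$ act on the pushout-shaped poset $I=\{a\leftarrow c\rightarrow b\}$ by fixing $c$ and swapping the two legs (so $I^G=\{c\}$), and let $X_c=S^0$ with trivial action, $X_a=X_b=\ast$. Then $\colim_I X=\ast$, so the left-hand side is a point, while $X^G\otimes_{I^G}K^G=X_c^G=S^0$ has two points: two fixed representatives over $I^G$ become identified through a zigzag that leaves $I^G$, so injectivity of your $\Phi$ fails (and surjectivity can fail in similar examples). Reflexivity of the coequalizer and strictness of the permutation action do not prevent this.

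What rescues the statement in the only case the paper uses, $K=B(-/I)^{op}$, is not a formal property of coends but the cellular structure of this particular $K$: the coend is then the geometric realization of the simplicial $G$-space whose $n$-simplices are $\coprod_{i_0\to\cdots\to i_n}X_{i_0}$, and realization is a sequential colimit of pushouts along closed $G$-cofibrations in which $G$ acts trivially on the simplex coordinates and by permuting the indexing $G$-sets of the coproducts; $(-)^G$ does commute with coproducts indexed by $G$-sets, with such pushouts, and with these sequential colimits. Levelwise one then uses $N_n(I)^G=N_n(I^G)$ together with the identity $\bigl(\coprod_{s\in S}Y_s\bigr)^G=\coprod_{s\in S^G}Y_s^G$ from your display, and identifies the result with $\hocolim_{I^G}X^G$. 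So your levelwise decomposition of the coproducts is the right starting point, but the argument must run through the simplicial filtration of $B(-/I)$ rather than through the bare coequalizer; a correct write-up should either impose a cellularity hypothesis on $K$ or prove only the $\hocolim$ statement.
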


\begin{proof}
Since fixed points commute with colimits, there is a canonical homeomorphism
\[(X\otimes_I K)^G\cong\colim\left((\coprod_{\substack{\alpha\colon i\rightarrow j\\
\in \hom I}}K_i\times X_j)^G\rightrightarrows (\coprod_{i\in \Ob I}K_i\times X_i)^G\right)\]
Fixed points commute with $G$-coproducts, in the sense that the above is homeomorphic to
\[(X\otimes_I K)^G\cong\colim\left(\coprod_{\substack{\alpha\colon i\rightarrow j\\
\in \hom I^G}}(K_i\times X_j)^G\rightrightarrows \coprod_{i\in Ob I^G}(K_i\times X_i)^G\right)\]
Finally, commuting the fixed points and the products we obtain an isomorphism between the equalizer above and $X^G\otimes_{I^G} K^G$.

When $K=B(-/I^{op})$, the formula above gives a homeomorphism
\[(\hocolim_IX)^G\cong X^G\otimes_{I^G} \big(B(-/I^{op})\big)^G\]
For every object $i$ of $I^G$ there are natural homeomorphisms
\[\big(B(i/I^{op})\big)^G\cong B\big((i/I^{op})^G\big)\cong B\big(i/(I^{G})^{op}\big)\]
that identify $X^G\otimes_{I^G} \big(B(-/I^{op})\big)^G$ with $\hocolim_{I^G}X^G$.
\end{proof}

\begin{rem}
The description of the fixed points of the $G$-homotopy colimit in terms of fixed points diagrams given in \ref{resmapF} makes it possible to deduce virtually all the equivariant homotopical properties of the $G$-homotopy colimit functor from the classical homotopical properties of the homotopy colimit. For example, it follows immediately from \ref{resmapF} that the $G$-homotopy colimits of equivalent $G$-diagrams are equivalent as $G$-spaces, recovering a result of \cite[6.1]{vilf}. Another example is the equivariant Thomason's Theorem \ref{GThom} below.
We will see in \S\ref{fixholim} that the relationship between $G$-homotopy limits and fixed points is more involved. 
\end{rem}

An immediate application of Proposition \ref{resmapF} is the equivariant version of Thomason's Theorem. Let $X\colon I\to Cat$ be a $G$-diagram of categories. The Grothendieck construction $I\wr X$ has an induced $G$-action, defined on objects by
\[g\cdot \big(i\in I\ ,\ x\in\Ob X_i\big)=\big(gi\ ,\ gx\in X-{gi}\big)\]
and on morphisms by
\[g\cdot \big( i\stackrel{\alpha}{\to} j\ ,\  \alpha_{\ast}x\stackrel{\gamma}{\to} y\big)=\big( gi\stackrel{g\alpha}{\longrightarrow} gj\ ,\  (g\alpha)_{\ast}(gx)=g(\alpha_\ast x)\stackrel{g\gamma}{\longrightarrow} gy\big)\]
The equality $(g\alpha)_{\ast}(gx)=g(\alpha_\ast x)$ expresses the naturality of $g\colon X\to X\circ g$. Thus the classifying space $B(I\wr X)$ inherits a $G$-action.
The following result was proved in \cite[2.27]{Gdiags} as a special case of a general equivariant Fubini Theorem. The proof presented here is more direct, reducing it to Thomason's Theorem \cite{Thomason} by a fixed points argument.

\begin{cor}[\cite{Gdiags}]\label{GThom}
Let $X\colon I\to Cat$ be a $G$-diagram of small categories, and let $BX$ be the $G$-diagram of spaces obtained by composing with the geometric realization. There is a natural equivalence of $G$-spaces
\[B(I\wr X)\stackrel{\simeq}{\longrightarrow}\hocolim_IBX\]
\end{cor}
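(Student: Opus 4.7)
The plan is to reduce to the classical (non-equivariant) Thomason theorem by restricting to $H$-fixed points, using Proposition \ref{resmapF}.

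First I would define the map. The classical Bousfield-Kan/Thomason comparison map $B(I\wr X)\to \hocolim_I BX$ is natural in the pair $(I,X)$. Naturality with respect to the $G$-action on $I$ (and the $G$-structure on $X$) forces this map to be $G$-equivariant, giving the desired morphism of $G$-spaces. So the real content is to show it is a $G$-equivalence, i.e., a weak equivalence on $H$-fixed points for every subgroup $H\leq G$.

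Next I would identify both sides after taking $H$-fixed points. For the left side, the Grothendieck construction commutes with fixed points: an object $(i,x\in X_i)$ is $H$-fixed exactly when $i\in I^H$ and $x\in (X_i)^H$, and similarly for morphisms, so there is a natural isomorphism of categories $(I\wr X)^H\cong I^H\wr X^H$, where $X^H\colon I^H\to Cat$ is the $H$-fixed points diagram from Definition \ref{deffixeddiag}. Since the nerve and geometric realization functors commute with fixed points by a discrete group (nerve commutes with all limits, and realization commutes with finite limits), we get
\[\bigl(B(I\wr X)\bigr)^H\cong B\bigl((I\wr X)^H\bigr)\cong B(I^H\wr X^H).\]
For the right side, Proposition \ref{resmapF} gives $(\hocolim_I BX)^H\cong \hocolim_{I^H}(BX)^H$, and $(BX)^H=B(X^H)$ pointwise for the same reason, so
\[\bigl(\hocolim_I BX\bigr)^H\cong \hocolim_{I^H} B(X^H).\]

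Finally I would observe that, via these natural identifications, the restriction of the equivariant Thomason map to $H$-fixed points becomes precisely the classical Thomason map for the diagram of categories $X^H\colon I^H\to Cat$. Quoting Thomason's theorem \cite{Thomason} applied to $X^H$, this map is a weak equivalence of spaces, hence the original map is a $G$-equivalence.

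The main obstacle is bookkeeping rather than conceptual: one has to verify that the natural identifications on both sides are compatible with the Thomason comparison map, i.e., that the map of simplicial sets describing the Bousfield-Kan model is literally identified with its $H$-fixed-point version under $(I\wr X)^H= I^H\wr X^H$ and $(BX)^H = B(X^H)$. This is a straightforward but tedious check at the level of the simplicial replacement formula, and once it is in hand the classical Thomason theorem finishes the argument.
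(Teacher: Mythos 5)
Your proposal is correct and follows essentially the same route as the paper: the paper likewise observes that Thomason's comparison map $\eta_X$ is equivariant, identifies $(I\wr X)^H\cong I^H\wr X^H$ and uses Proposition \ref{resmapF} to identify the fixed points of the homotopy colimit, and then concludes that $\eta_X^H$ is the classical Thomason map $\eta_{X^H}$, which is an equivalence by \cite{Thomason}.
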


\begin{proof}
The map $\eta_X\colon B(I\wr X)\to\hocolim_IBX$ defined in \cite{Thomason} is equivariant. For a subgroup $H$ of $G$, there is a natural isomorphism of categories $(I\wr X)^H\cong I^H\wr X^H$. Under this isomorphism and the homeomorphism of \ref{resmapF}, the map $\eta_{X}^H$ corresponds to the map
\[\eta_{X^H}\colon I^H\wr X^H\longrightarrow \hocolim_{I^H}B(X^H)\]
which is an equivalence by Thomason's Theorem \cite{Thomason}.  
\end{proof}

\subsection{Equivariant homotopy limits and fixed points}\label{fixholim}

The relationship between the fixed points of the $G$-homotopy limit and the homotopy limit of the fixed points diagrams is more involved than it is for $G$-homotopy colimits. Given a pair of $G$-diagrams of spaces  $K,X\in \Top^{I}_a$ there is a restriction map
\[\Hom_I(K,X)^G\longrightarrow \Hom_{I^G}(K^G,X^G)\]
which sends a morphism of $G$-diagrams to its restriction on the fixed points diagrams. This is generally far from being an equivalence.
When $K=B(I/_{-})$ we describe the homotopy fibers of the restriction map $(\holim_{I}X)^G\rightarrow \holim_{I^G}X^G$ in Proposition \ref{resfixespts} below. It turns out that in order to describe the whole fixed points space $\Hom_I(K,X)^G$ one needs to consider the natural transformations between the $H$-fixed points diagrams of $K$ and $X$, for every subgroup $H$ of $G$. This mapping spaces for the various subgroups of $G$ are related by means of the twisted arrow category.

Let us recall from \cite{DK} that the twisted arrow category $Tw(C)$ of a category $C$ has objects the morphisms $f\colon c\to d$ in $C$. A morphisms $f\to f'$ in $Tw(C)$ is a commutative diagram
\[\xymatrix@=13pt{c\ar[r]^-f\ar[d]& d\\
c'\ar[r]_-{f'}&d'\ar[u]}\]
Let  $\mathcal{O}_G$ be the orbit category of $G$. We use the twisted arrow category $Tw(\mathcal{O}^{op}_G)$ to glue together the mapping spaces between the fixed point diagrams of $K$ and $X$. An object of $Tw(\mathcal{O}^{op}_G)$ is an equivariant map $G/H\stackrel{f}{\leftarrow} G/L$. This induces a functor $f^{\ast}\colon I^H\to I^L$, defined on objects by
\[f^{\ast}i=f(L)\cdot i\]
A similar formula defines $f^\ast$ on morphisms. Precomposing a diagram with $f^{\ast}$ leads to a functor $f_!\colon \Top^{I^L}\to \Top^{I^H}$. Given two $G$-diagrams of spaces $K,X\in Top^{I}_{a}$ define a functor
\[Tw(\mathcal{O}^{op}_G)^{op}\longrightarrow \Top\]
by sending an object $G/L\stackrel{f}{\leftarrow} G/H$ of $Tw(\mathcal{O}^{op}_G)$ to the space of natural transformations of $I^H$-diagrams $\Hom_{I^H}(K^H,f_! X^L)$. On morphisms this functor is defined as
\[\vcenter{\hbox{\xymatrix{G/H&\ar[l]_-f G/L\ar[d]_{b}\\
G/H'\ar[u]^a&\ar[l]^-{f'}G/L'}}} \longmapsto
\vcenter{\hbox{\xymatrix@C=20pt{\Hom_{I^H}(K^H,f_! X^L)&&\Hom_{I^H}(a_! K^{H'},a_! f'_! b_{\ast} X^{L'})\ar[ll]_-{(-)\circ a(H')}\\
\Hom_{I^{H'}}(K^{H'},f'_! X^{L'})\ar@{-->}[u]\ar[rr]_-{\res_{a^{\ast}}}
&&\Hom_{I^H}(a_! K^{H'},a_! f'_! X^{L'})\ar[u]_{b(L)\circ (-)}
}}}\]
In the right-hand square, the lower horizontal map restricts a natural transformation along the functor $a^{\ast}\colon I^H\to I^{H'}$. The top horizontal map is precomposition with pointwise multiplication by $a(H')$ in the $G$-structure on $K$, in symbols $a(H')\colon K^{H}_i\to K^{H}_{a(H)\cdot i}= (a_\ast K^H)_i$. Similarly, the right vertical map composes a natural transformation with the action of a representative of $b(L)$ for the $G$-structure of $X$. Explicitly, a natural transformation $\Phi\colon K^{H'}\to f'_\ast X^{L'}$ is sent to
\[\xymatrix@C=40pt{K^{H}_i\ar[r]^-{a(H')}& K^{H'}_{a(H')\cdot i}\ar[r]^-{\Phi_{a(H')i}}&X^{L'}_{f'(L')a(H')i}\ar[r]^-{b(L)}&X^{L}_{b(L)f'(L')a(H')i}=X^{L}_{f(L)i}
}\]

\begin{prop}\label{twisted}
For every pair of $G$-diagrams $K,X$ in $\Top^{I}_{a}$, there is a natural homeomorphism
\[\Hom_I(K,X)^G\cong \lim_{\substack{f\colon G/L\to G/H \\
\in Tw(\mathcal{O}^{op}_G)^{op}}}\Hom_{I^H}(K^H,f_{!}X^L)\]
In particular when $K=B(I/-)$ this is a homeomorphism
\[(\holim_I X)^G\cong \lim_{\substack{f\colon G/L\to G/H \\
\in Tw(\mathcal{O}^{op}_G)^{op}}}
\holim_{I^H}f_{!}X^L\]
\end{prop}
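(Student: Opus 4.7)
My plan is to construct continuous, mutually inverse maps between the two $G$-spaces, naturally in $K$ and $X$. For the forward direction, given a morphism of $G$-diagrams $\Psi\colon K\to X$, I assign to each object $f\colon G/L\to G/H$ of $Tw(\mathcal{O}_G^{op})$, with $f(L)=xH$, the $I^H$-natural transformation with $i$-component $y\mapsto\phi_x^X\Psi_i(y)$ at $i\in I^H$. The routine checks are: (i) the image lies in $X^L_{xi}$, because for $l\in L$ one has $x^{-1}lx\in H$ and, since $\Psi$ intertwines the $G$-structures, $\phi_l^X\phi_x^X\Psi_i(y)=\phi_x^X\Psi_i(\phi_{x^{-1}lx}^K y)=\phi_x^X\Psi_i(y)$; (ii) independence of the representative $x$ of $xH$, by the same $H$-fixedness; (iii) naturality in $i$, from naturality of $\Psi$ and $\phi_x^X$; (iv) compatibility with a morphism $(a,b)\colon f\to f'$ in $Tw(\mathcal{O}_G^{op})$, which, choosing representatives $\alpha,\beta,\gamma$ of $a(H')$, $b(L)$, $f'(L')$, unpacks via the commutation $f(L)=\beta\gamma\alpha H$ and the $G$-equivariance of $\Psi$ into the identity $\phi_{\beta\gamma\alpha}^X\Psi_i=\phi_\beta^X\phi_\gamma^X\Psi_{\alpha i}\phi_\alpha^K$.

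For the inverse, given a compatible family $\{\Psi^f\}_f$ in the limit, I evaluate at $f=\id_{G/1}$ to obtain a natural transformation $\Psi=\Psi^{\id_{G/1}}\colon K\to X$ of underlying diagrams (using $I^1=I$ and $\id_!=\id$). The key step is verifying $G$-equivariance. For each $\alpha\in G$, the right multiplication $R_\alpha\colon G/1\to G/1$ is equivariant, and $(R_\alpha,R_{\alpha^{-1}})$ is an automorphism of $\id_{G/1}$ in $Tw(\mathcal{O}_G^{op})$, since the commutation reduces to $R_\alpha\circ R_{\alpha^{-1}}=\id$. The compatibility constraint imposed by this automorphism simplifies to $\Psi_i=\phi_{\alpha^{-1}}^X\Psi_{\alpha i}\phi_\alpha^K$, equivalently $\phi_\alpha^X\Psi_i=\Psi_{\alpha i}\phi_\alpha^K$, which is exactly the $G$-morphism condition.

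To see the two maps are mutually inverse, one direction is immediate (taking $x=e$ when $f=\id_{G/1}$). For the other, given a compatible family with $\Psi:=\Psi^{\id_{G/1}}$, I introduce the auxiliary object $r:=f\circ q_L\colon G/1\to G/H$, where $q_L\colon G/1\to G/L$ is the quotient, together with the morphisms $(r,\id_{G/1})\colon r\to\id_{G/1}$ and $(\id_{G/H},q_L)\colon r\to f$ in $Tw(\mathcal{O}_G^{op})$. The first compatibility identifies $\Psi^r_i=\Psi_{xi}\phi_x^K=\phi_x^X\Psi_i$, the second gives $\Psi^r_i=\Psi^f_i$, so $\Psi^f_i=\phi_x^X\Psi_i$, recovering the forward image. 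The ``in particular'' clause for $K=B(I/-)$ follows from the canonical identification $B(I/i)^H\cong B(I^H/i)$ for $i\in I^H$, which replaces $K^H$ by $B(I^H/-)$ and the right-hand side by $\holim_{I^H}f_!X^L$. The main obstacle is purely notational bookkeeping: one must keep track of sources and targets in $Tw(\mathcal{O}_G^{op})$, the induced functors $f^*$, $f_!$, $a^*$ between fixed-point categories, and the two $G$-structures simultaneously. Conceptually, however, the statement is a formal unpacking of the definition of a $G$-morphism, with $Tw(\mathcal{O}_G^{op})$ serving precisely as the indexing diagram that glues together the fixed-point data of $K$ and $X$.
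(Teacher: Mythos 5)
Your proof is correct, and the forward map you construct ($\Psi\mapsto\phi^X_{f(L)}\circ\Psi_i|_{K^H_i}$) is literally the same formula the paper uses. The difference is in how the hard part is handled. The paper first invokes the Elmendorf-type Quillen equivalence of \cite[2.28]{Gdiags}, whose counit being an isomorphism gives $\Hom_I(K,X)^G\cong\Hom_{\mathcal{O}^{op}_G\wr\overline{a}}\big(R(K),R(X)\big)$ for free, and then identifies the right-hand side with the twisted-arrow limit by evaluating a compatible family at each identity $\id_{G/H}$. You instead build the inverse directly: evaluate only at $\id_{G/1}$ to get a transformation of underlying diagrams, recover $G$-equivariance from the compatibility constraints attached to the automorphisms $(R_\alpha,R_{\alpha^{-1}})$ of $\id_{G/1}$ in $Tw(\mathcal{O}^{op}_G)$, and prove mutual inverseness via the zig-zag $\id_{G/1}\leftarrow f\circ q_L\rightarrow f$. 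This makes your argument self-contained — it replaces the citation of the external adjunction result by two explicit diagram chases in the twisted arrow category — at the cost of more bookkeeping; the paper's route buys brevity and situates the proposition inside the general model-categorical framework of \cite{Gdiags}. Both are valid, and your checks (i)--(iv), the $R_\alpha$ argument, and the auxiliary object $r=f\circ q_L$ all hold up under scrutiny.
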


\begin{proof}
In trying to dualize the argument of the proof of \ref{resmapF}, one encounters the problem that $G$-indexed products do not commute with fixed points. Instead, we express the fixed points of the mapping space as a mapping space on a more complicated category. The functor $a\colon G\to Cat$ which defines the $G$-action on $I$ induces a functor $\overline{a}\colon \mathcal{O}^{op}_G\to Cat$ that sends $G/H$ to the fixed points category $I^H$, and a $G$-map $G/H\stackrel{f}{\leftarrow} G/L$ to the functor $f^\ast\colon I^H\to I^L$ described above. Recall the isomorphism of categories $\Top^{I}_{a}\cong \Top^{G\ltimes_a I}$ of \ref{holimGrot}, where $G\ltimes_a I$ is the Grothendieck construction $G\wr a$. The canonical inclusion $G\to \mathcal{O}^{op}_G$ induces a functor $\Top^{\mathcal{O}^{op}_G\wr \overline{a}}\to \Top^{G\wr a}\cong\Top^{I}_{a}$. Theorem \cite[2.28]{Gdiags} shows that this functor is the left adjoint of a Quillen equivalence
\[L\colon \Top^{\mathcal{O}^{op}_G\wr \overline{a}}\rightleftarrows \Top^{G\wr a}\cong\Top^{I}_{a} \colon R\]
where the left-hand category has the projective model structure, and the right-hand category has a model structure where the equivalences are the equivalences of $G$-diagrams of \ref{defeqGdiag}. This is a sort of Elmendorf Theorem for the category of $G$-diagrams. The functor $R$ sends a $G$-diagram $X$ to the diagram with vertices
\[R(X)_{(G/H,i\in I^H)}=X^{H}_i\]
The counit of this adjunction is an isomorphism, giving a homeomorphism
\[\Hom(K,X)^G\cong \Hom_{\mathcal{O}^{op}_G\wr \overline{a}}\big(R(K),R(X)\big)\]
(see \cite[2.28]{Gdiags}). We finish the proof by defining a homeomorphism between the right-hand side and the limit of \ref{twisted}. An element of  $\Hom_{\mathcal{O}^{op}_G\wr \overline{a}}\big(R(K),R(X)\big)$ is the data of a map $\Phi_{(H,i)}\colon K^{H}_i\to X^{H}_i$ for every subgroup $H$ of $G$ and fixed object $i\in I^H$, subject to compatibility conditions corresponding to the morphisms of $\mathcal{O}^{op}_G\wr \overline{a}$. For every equivariant map $G/H\stackrel{f}{\leftarrow} G/L$ define a map
\[\Hom_{\mathcal{O}^{op}_G\wr \overline{a}}\big(R(K),R(X)\big)\longrightarrow \Hom_{I^H}(K^H,f_{!}X^L)\]
by sending a collection of maps $\Phi$ as above to
\[K^{H}_i\stackrel{\Phi_{(H,i)}}{\xrightarrow{\hspace{1cm}}} X^{H}_i\stackrel{f(L)}{\xrightarrow{\hspace{1cm}}} X^{L}_{f(L)\cdot i}=(f_! X^L)_i\] 
Naturality of $\Phi$ insures that these maps are compatible with the morphisms in $Tw(\mathcal{O}^{op}_G)^{op}$, and this defines a map
\[\Hom_{\mathcal{O}^{op}_G\wr \overline{a}}\big(R(K),R(X)\big)\longrightarrow \lim_{f\in Tw(\mathcal{O}^{op}_G)^{op}}\Hom_{I^H}(K^H,f_{!}X^L)\]
The inverse of this map sends a collection of natural transformations $\Psi_f\colon K^H\to f_{\ast}X^L$ in the limit to
\[\Psi_{\id_{G/H}}\colon K^{H}_i{\longrightarrow} X^{H}_i\]
\end{proof}

\begin{rem}
The limit over the Twisted arrow category of \ref{twisted} is in general not a homotopy limit, and hence it is not homotopy invariant. This prevents us from easily deduce homotopical properties of the equivariant mapping space $\Hom_I(K,X)$ from the homotopical properties of the fixed points diagrams. For example, proving that the $G$-homotopy limit functor preserves equivalences requires a considerable amount of work. The equivariant mapping space $\Hom_I(K,X)$ does however enjoy many equivariant homotopical properties, which are studied extensively in \cite{Gdiags} using a model-categorical approach.
\end{rem}

We turn our attention to the connectivity of the restriction map  $(\holim_{I}X)^G\rightarrow \holim_{I^G}X^G$ which will play a key role in the proof of the equivariant Blakers-Massey Theorem \ref{BM}.
Let $\iota\colon I^G\rightarrow I$ be the inclusion of the fixed points category. This is a $G$-equivariant functor for the trivial $G$-action on $I^G$. Therefore  the functor $B(\iota/-)\colon I\to\Top$ has a natural $G$-structure (see Example \ref{overcatGdiag}). There is a pointwise injective morphism $B(\iota/-)\rightarrow B(I/-)$ in $\Top_{a}^I$, and we denote its pointwise quotient $\bigslant{B(I/-)}{B(\iota/-)}$. This is indeed a $G$-diagram since $\Top_{a}^I$ has all colimits. Moreover it is canonically a diagram of based spaces, meaning that there is a canonical natural transformation $\ast\rightarrow \bigslant{B(I/-)}{B(\iota/-)}$.

\begin{prop}\label{resfixespts} Let $X\colon I\to \Top$ be a $G$-diagram of spaces and suppose that the simplicial set $NI/_i$ is finite dimensional for every object $i$ of $I$. Then the restriction map $(\holim_{I}X)^G\rightarrow \holim_{I^G}X^G$ is a fibration. Given a natural transformation $\ast\rightarrow X$ in $\Top_{a}^I$, the fiber of the restriction map over the associated constant natural transformation $B(I^G/-)\rightarrow \ast\stackrel{}{\rightarrow} X^G$ is the fixed points space
\[\Hom_\ast\Big(\bigslant{B(I/-)}{B(\iota/-)},X\Big)^G\]
of the space of natural transformations of pointed maps.
\end{prop}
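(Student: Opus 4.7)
Plan: My strategy is to identify the restriction map with precomposition along the pointwise injection of $G$-diagrams $\mu\colon B(\iota/-)\hookrightarrow B(I/-)$, and to deduce both the fibration property and the fiber description from properties of $\mu$.

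The key first step is a natural isomorphism
\[\Hom_I\bigl(B(\iota/-),X\bigr)^G\ \cong\ \Hom_{I^G}\bigl(B(I^G/-),X^G\bigr)=\holim_{I^G}X^G.\]
For $i\in I^G$, the $G$-action on $\iota/i$ fixes a pair $(j,\alpha)$ exactly when $\alpha\colon j\to i$ lies in $I^G$, so $(\iota/i)^G=I^G/i$. Restricting a $G$-equivariant $\Phi\colon B(\iota/-)\to X$ to $I^G$ and then to $G$-fixed points therefore produces a natural transformation $\Psi\colon B(I^G/-)\to X^G$. Conversely, since all morphisms of $\iota/i$ are themselves in $I^G$, every simplex of $N(\iota/i)$ has the form $\alpha_\ast\sigma'$ for some $\sigma'\in N(I^G/x_0)$ and some $\alpha\colon x_0\to i$ in $I$; naturality and equivariance then force the extension $\Phi_i(\alpha_\ast\sigma')=\alpha_\ast\Psi_{x_0}(\sigma')$, which is well-defined because $\Psi_{x_0}(\sigma')\in X_{x_0}^G$ is already $G$-fixed. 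Under this identification, the restriction map of the proposition becomes precomposition with $\mu$.

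For the fibration property, I observe that $\mu_i\colon B(\iota/i)\hookrightarrow B(I/i)$ is a cellular inclusion of CW complexes, with relative cells indexed by the simplices of $NI/_i$ that lie outside $N(\iota/i)$. The $G$-action on $I$ permutes these cells, so $\mu$ is a relative $G$-cell inclusion of $G$-diagrams; the finite-dimensionality hypothesis on $NI/_i$ bounds the dimensions of the cells. A $G$-equivariant homotopy extension argument, carried out one $G$-orbit of relative cells at a time, yields the homotopy lifting property for $\Hom_I(\mu,X)^G$.

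For the fiber, the constant transformation $B(I^G/-)\to\ast\to X^G$ corresponds under the isomorphism above to the $G$-equivariant constant $B(\iota/-)\to\ast\to X$. An element of the fiber over it is therefore a $G$-equivariant $\Phi\colon B(I/-)\to X$ whose precomposition with $\mu$ factors through the basepoint, which by the universal property of the pointwise quotient in the category of based $G$-diagrams is the same as an element of $\Hom_\ast(B(I/-)/B(\iota/-),X)^G$. The main obstacle is the equivariant HLP step: since $G$-fixed points do not generally preserve fibrations of $G$-spaces, one must carry out the homotopy extension at the $G$-diagram level and exploit that the cells of $\mu$ off the fixed subdiagram are free with respect to the $G$-action, so that the classical cellular HEP argument produces equivariant lifts.
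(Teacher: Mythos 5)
Your overall strategy coincides with the paper's: identify the restriction map with precomposition along the pointwise inclusion $\mu\colon B(\iota/-)\to B(I/-)$ of $G$-diagrams, prove that $\mu$ is an equivariant cofibration so that $\Hom_I(\mu,X)^G$ is a fibration, and read off the fiber as the pointed mapping space out of the pointwise quotient. Your identification $\Hom_I(B(\iota/-),X)^G\cong\Hom_{I^G}(B(I^G/-),X^G)$ and your description of the fiber are correct.

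The gap is in the cofibration step. The relative cells of $\mu$ --- the nondegenerate simplices of $N(I/i)$ not lying in $N(\iota/i)$ --- are \emph{not} free under the $G$-action. A chain $j_0\to\dots\to j_n\to i$ fails to lie in $N(\iota/i)$ as soon as some $j_k$ is not $G$-fixed, but it may still be fixed by a nontrivial proper subgroup of $G_i$: take $G=\mathbb{Z}/2\times\mathbb{Z}/2$ acting on $J=\mathbb{Z}/2$ through the first factor, $I=\mathcal{P}(J)$, $i=J$; the $0$-simplex $\{0\}\subseteq J$ of $N(I/J)$ is outside $N(\iota/J)$ but has stabilizer the second $\mathbb{Z}/2$. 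So the ``classical cellular HEP via free cells'' you invoke does not apply as stated. What is true, and what the paper proves, is that $\mu$ is a cofibration in the $G$-projective model structure on $\Top_{a}^I$: the relative latching maps $B(\iota/i)\coprod_{L_iB(\iota/-)}L_iB(I/-)\to B(I/i)$ are levelwise injective maps of simplicial $G_i$-sets, hence relative cell complexes built from orbit cells $G_i/H\times(\partial D^n\to D^n)$ with arbitrary stabilizers $H$. The homotopy lifting property still goes through for such cells, since the retraction of $D^n\times[0,1]$ onto $D^n\times\{0\}\cup\partial D^n\times[0,1]$ is equivariant for the trivial $H$-action and one simply extends inside $X_i^H$; so your argument is repairable, but the repair must abandon the freeness claim rather than exploit it. Relatedly, attaching ``one $G$-orbit of cells at a time'' must be organized compatibly with the diagram structure: a cell at the vertex $i$ can only be attached after the latching data coming from non-identity maps $j\to i$ is in place. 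This is where the finite-dimensionality of $N(I/i)$ actually enters --- it makes the degree function $\deg(i)=\dim N(I/i)$ finite and permits induction over the resulting filtration of $I$ --- and not merely as a bound on the dimensions of the cells.
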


\begin{proof} Since the $G$-action on $I^G$ is trivial, there is a natural homeomorphism
$\Hom(B(I^G/-),X^G)\cong \Hom(B(I^G/-),X|_{I^G})^G$ and the restriction map fits into a commutative diagram
\[\xymatrix@C=50pt{\Hom_I\big(B(I/-),X\big)^G\ar[r]^-{res}\ar[dr]&
\Hom_{I^G}\big(B(I^G/-),X|_{I^G}\big)^G\\
&\Hom_I\big(B(\iota/-),X\big)^G\ar[u]_{\cong}
}\]
The vertical map is the restriction on $G$-fixed points of an adjunction isomorphism of the kind $\Hom_D\big(B(F/-),Z\big)\cong \Hom_C\big(B(C/-),F^{\ast}Z\big)$ for functors $C\stackrel{F}{\rightarrow }D\stackrel{Z}{\rightarrow } \Top$ (see e.g. \cite[19.6.6]{hirsch}). We show that the diagonal map $\Hom_I\big(B(I/-),X\big)\rightarrow
\Hom_I\big(B(\iota/-),X\big)$ induced by the inclusion $B(\iota/-)\rightarrow B(I/-)$ is a fibration in $\Top^G$, and that the point-fiber is the space of natural transformations of \ref{resfixespts}.

The diagonal map is a fibration provided $B(\iota/-)\rightarrow B(I/-)$ is a cofibration in the $G$-projective model structure of $\Top_{a}^I$, as defined in \cite[2.6]{Gdiags}. The following argument is analogous to \cite[18.4.1]{hirsch}.
The map $B(\iota/-)\rightarrow B(I/-)$ is a cofibration if we can solve the lifting problem
\[\xymatrix@=20pt{B(\iota/-)\ar[r]\ar[d]&E\ar@{->>}[d]^{\simeq}\\
B(I/-)\ar[r]\ar@{-->}[ur]_-l&B
}\]
for every acyclic fibration $E\rightarrow B$ in $\Top_{a}^I$. This is a morphism of $G$-diagrams with the property that $E_i\rightarrow B_i$ is an acyclic fibration of $G_i$-spaces for every object $i$ of $I$. For every $G$-diagram $X$ in $\Top_{a}^I$ and every object $i$ of $I$ let $L_iX$ be the $G_i$-space
\[L_iX=\colim\limits_{(j\rightarrow i)\neq \id_i}X_j\]
If we can prove that the relative latching maps 
\[B(\iota/i)\coprod\limits_{L_iB(\iota/-)}
L_iB(I/-)\longrightarrow B(I/i)\]
are cofibrations of $G_i$-spaces for every $i$, one can build a lift $l$ inductively on the filtration of $I$ defined by the degree function $\deg=\dim N(I/-)\colon Ob I\rightarrow\mathbb{N}$. The construction of $l$  is completely analogous to \cite[A.6]{Gdiags}. Since the geometric realization functor preserves cofibrations and it commutes with colimits, it is enough to prove that
\[N(\iota/i)\coprod\limits_{L_iN(\iota/-)}
L_iN(I/-)\longrightarrow N(I/i)\]
is a cofibration of simplicial $G_i$-sets. A cofibration of simplicial $G_i$-sets is a $G_i$-equivariant map that is levelwise injective,
%\cite{Shipley notes}
that is a $G_i$-equivariant maps which is a cofibrations of underlying simplicial sets.
The maps $L_iN(I/-)\rightarrow N(I/i)$ and $L_iN(\iota/-)\rightarrow N(\iota/i)$ are cofibrations of simplicial sets as both $N(I/-)$ and $N(\iota/-)$ are (Reedy) cofibrant diagrams of simplicial sets (see e.g. \cite[14.8.5]{hirsch}).
It follows that the $G_i$-equivariant map from the pushout
\[\xymatrix@R=10pt@C=50pt{L_iN(\iota/-)\ \ar[d]\ar@{>->}[r]&N(\iota/i)\ar[d]\ar@/^1pc/[ddr]\\
L_iN(I/-)\ \ar@{>->}[r]\ar@{>->}@/_1pc/[drr]&
N(\iota/i)\coprod\limits_{L_iN(\iota/-)}\ar@{-->}[dr]
L_iN(I/-)\\
&& N(I/i)
}\]
is also a cofibration of simplicial sets.
%so are $L_iN(I/-)\rightarrow N(I/i)$ and $L_iN(\iota/-)\rightarrow N(\iota/-)\rightarrow N(I/-)$
%as both $N(I/-)$ and $N(\iota/-)$ are (Reedy) cofibrant (see \cite[14.8.5]{hirsch}). Thus it is enough to show that if an element $\big((i_0\rightarrow\dots\rightarrow i_n)\in N_nI^G,(\alpha\colon i_n\rightarrow i)\in \hom I \big)$ of $N_n\iota/_i$ and a pair $\big((\delta\colon j\rightarrow i)\neq\id_i\in \hom I, (j_0\rightarrow\dots\rightarrow j_n\stackrel{\beta}{\rightarrow} j)\in N_nI/_j\big)$ representing an element of $L_iN_nI/_{(-)}$ are sent to the same element of $NI/_i$, then they must come from $L_iN_n\iota/_{(-)}$. If they are sent to the same element of $NI/_i$ we must have
%\[(i_0\rightarrow\dots\rightarrow i_n)=(j_0\rightarrow\dots\rightarrow j_n)  \ \ \ \ \mbox{and}\ \ \ \ \alpha=\delta\circ\beta\]
%Therefore $\big(\delta\colon j\rightarrow i,(i_0\rightarrow\dots\rightarrow i_n),\beta\colon i_n\rightarrow j\big)$ represent an element of $L_iN_n\iota/_{(-)}$ that maps to both $\big((i_0\rightarrow\dots\rightarrow i_n),\alpha)$ and $\big(\delta, (j_0\rightarrow\dots\rightarrow j_n\stackrel{\beta}{\rightarrow} j)\big)$. 
This finishes the proof that the restriction map is a fibration.

We still need to describe the fiber of the restriction map. Since fibers commute with fixed points, equalizers and products, the fiber of the restriction map $\Hom_I(B(I/-),X)^G\rightarrow\Hom_I(B(\iota/-),X)^G$ over $ B(\iota/-)\rightarrow\ast\stackrel{x}{\rightarrow} X$ is the equalizer
\[\lim\bigg(\prod_{i\in \Ob I}\fib\left(
\vcenter{\hbox{\xymatrix{\map(B(I/i), X_i)\ar[d]\\ \map(B(\iota/i), X_i)}}}
\right)\rightrightarrows \prod_{\substack{\alpha\colon i\rightarrow j\\
\in \hom I}}\fib\left(
\vcenter{\hbox{\xymatrix{\map(B(I/i), X_j)\ar[d]\\ \map(B(\iota/i), X_j)}}}
\right)\bigg)^G\]
where the fibers are taken over the constant maps 
$B(\iota/i)\rightarrow\ast\stackrel{x_i}{\rightarrow} X_i$ and $B(\iota/i)\rightarrow\ast\stackrel{x_j}{\rightarrow} X_j$ respectively. These fibers are naturally isomorphic to the spaces of pointed maps from the quotients, giving a homeomorphism between the equalizer above and
\[\lim\bigg(\prod_{i\in \Ob I}\map_\ast\left(\bigslant{B(I/i)}{B(\iota/i)},X_i\right)\rightrightarrows \prod_{\substack{\alpha\colon i\rightarrow j\\
\in \hom I}}\map_\ast\left(\bigslant{B(I/i)}{B(\iota/i)},X_j\right)\bigg)^G\]
This is precisely the mapping space of the statement.
\end{proof}

\begin{cor}\label{connresmap}
Let $X\colon I\to \Top$ be a $G$-diagram of spaces. Suppose that the simplicial sets $NI/_i$ are finite dimensional, and that the composition of the canonical maps \[(\lim_{I}X)^G\to(\holim_{I}X)^G\to
\holim_{I^G}X^G\]
is surjective in $\pi_0$. Then the restriction map $(\holim_{I}X)^G\rightarrow \holim_{I^G}X^G$ is at least
\[\min_{i\in Ob I}\min_{\substack{H\leq G_i\\ \hom (\iota^H/{i})\subsetneqq \hom (I^H/{i})}
}\left(\conn X_{i}^H-\dim N(I^H/i)\right)+1\]
connected, where $\iota^H\colon I^G\rightarrow I^H$ is the inclusion.
\end{cor}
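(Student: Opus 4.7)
The plan is to combine Proposition \ref{resfixespts}, which identifies the restriction map as a fibration with an explicit pointed mapping space as fiber, with a cellular obstruction-theoretic computation of the connectivity of that fiber.

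By Proposition \ref{resfixespts} the restriction map is a fibration, so its connectivity equals one plus the connectivity of its fibers over basepoints in each component of the base. The hypothesis that $(\lim_I X)^G\to \holim_{I^G}X^G$ is surjective on $\pi_0$ guarantees that every component of the base contains a basepoint of the special form considered in that proposition: a point of $(\lim_I X)^G$ is precisely a morphism of $G$-diagrams $\ast\to X$, whose image in $\holim_{I^G}X^G$ is the constant natural transformation through that point. For such a basepoint the fiber is
\[F=\Hom_\ast\Big(\bigslant{B(I/-)}{B(\iota/-)},X\Big)^G,\]
and it suffices to show $\conn F\geq \min_{i,H}\big(\conn X_i^H-\dim N(I^H/i)\big)$, with the minimum taken over the pairs $(i,H)$ appearing in the statement.

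To bound $\conn F$, I would filter the pointed $G$-diagram $Q=B(I/-)/B(\iota/-)$ by sub-$G$-diagrams $Q^{(-1)}=\ast\subset Q^{(0)}\subset Q^{(1)}\subset\cdots$, where $(Q^{(d)})_i$ is the image in $B(I/i)/B(\iota/i)$ of the $d$-skeleton of $N(I/i)$; since each $N(I/i)$ is finite dimensional this filtration stabilizes pointwise. For each $i$ and each $H\leq G_i$, the $H$-fixed vertex $(Q^H)_i=B(I^H/i)/B(\iota^H/i)$ is a pointed CW complex of dimension at most $\dim N(I^H/i)$, and reduces to a point precisely when $\hom(\iota^H/i)=\hom(I^H/i)$. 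The successive quotients $Q^{(d)}/Q^{(d-1)}$ are pointed $G$-diagrams whose vertex at $i$ is a wedge of $d$-spheres indexed by the $G_i$-orbits of non-degenerate $d$-simplices of $N(I/i)$ not lying in $N(\iota/i)$; these orbits decompose by stabilizer type $H\leq G_i$, and only pairs satisfying $\hom(\iota^H/i)\subsetneqq \hom(I^H/i)$ contribute any cells. Applying $\Hom_\ast(-,X)^G$ to the cofiber sequences $Q^{(d-1)}\hookrightarrow Q^{(d)}\to Q^{(d)}/Q^{(d-1)}$ produces a tower of fibrations with limit $F$, starting from $\Hom_\ast(Q^{(-1)},X)^G=\ast$. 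Via the Elmendorf-type adjunction of \cite[2.28]{Gdiags} (already invoked in the proof of Proposition \ref{twisted}), a free $d$-cell of orbit type $(i,H)$ contributes a factor $\Omega^dX_i^H$ to the fiber of the corresponding step; this factor has connectivity $\conn X_i^H-d\geq \conn X_i^H-\dim N(I^H/i)$. Inducting along the tower using the long exact sequences of these fibrations yields the desired lower bound on $\conn F$ and hence on the restriction map.

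The main obstacle is the verification that mapping $G$-equivariantly out of a single free cell of orbit type $(i,H)$ in $Q$ yields $\Omega^dX_i^H$. This relies on the Elmendorf-type adjunction together with careful bookkeeping of the isotropy groups of the non-degenerate simplices of $N(I/i)$; once this identification is in place, the connectivity estimate is a standard cellular obstruction argument.
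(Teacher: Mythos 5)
Your first step coincides with the paper's: Proposition \ref{resfixespts} exhibits the restriction map as a fibration whose fibers (thanks to the $\pi_0$ hypothesis) are all of the form $\Hom_\ast\big(\bigslant{B(I/-)}{B(\iota/-)},X\big)^G$, and everything reduces to a connectivity estimate for this mapping space. For that estimate the paper simply invokes Proposition \ref{connfixedptsholim} from the appendix, which is proved by the dual of your argument: instead of filtering the source diagram $K=\bigslant{B(I/-)}{B(\iota/-)}$ by skeleta and running up a tower of fibrations, the paper extends a map $K\wedge S^k\to X$ over $K\wedge D^{k+1}$ cell by cell, inducting over the degree filtration of $I$ and over the relative cells of the latching maps. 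The two packagings are equivalent forms of cellular obstruction theory and both yield the stated bound; yours carries the small extra burden of a Mittag--Leffler argument (surjectivity of the tower on $\pi_{c+1}$, coming from the long exact sequences) to kill $\lim^1$ at the edge of the range when $NI$ itself is infinite-dimensional, a case the hypotheses allow.

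The one place where your bookkeeping is actually off is the identification of the layers. As a $G$-diagram, $B(I/-)$ is built from \emph{free} cells $\Hom_I(k_d,-)_+\wedge D^d$ indexed by the non-degenerate $d$-simplices $\sigma=(k_0\to\cdots\to k_d)$ of $NI$, based at the \emph{last} vertex $k_d$; the wedge summand of $(Q^{(d)}/Q^{(d-1)})_i$ labelled by a simplex $(\sigma, k_d\to i)$ of $N(I/i)$ is not a cell of the diagram at $i$ but the image of the free cell at $k_d$ under a structure map. Consequently $\Hom_\ast(Q^{(d)}/Q^{(d-1)},X)^G$ is a product over $G$-orbits of such $\sigma$ (not over $G_i$-orbits of simplices of $N(I/i)$) of factors $\Omega^d X_{k_d}^{G_\sigma}$, not $\Omega^d X_i^H$. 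This does not change the final bound---each factor still has connectivity $\conn X_{k_d}^{G_\sigma}-d\geq \conn X_{k_d}^{G_\sigma}-\dim N(I^{G_\sigma}/k_d)$, since $\sigma$ itself gives a non-degenerate $d$-simplex of $N(I^{G_\sigma}/k_d)$ lying outside $N(\iota^{G_\sigma}/k_d)$, so $(k_d,G_\sigma)$ is an admissible pair in the minimum---but it is exactly the ``careful bookkeeping'' you defer to the end, and as written it is not correct. With that correction, and the observation that the skeletal inclusions are projective cofibrations of $G$-diagrams (being pushouts of coproducts of free cells), your argument goes through and in effect reproves Proposition \ref{connfixedptsholim}, which the paper takes as ready-made input.
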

\begin{rem}
The $\pi_0$ hypothesis of Corollary \ref{connresmap} is satisfied if for every fixed object $i$ of $I^G$ the connectivity of $X^G_{i}$ is greater or equal to the dimension of $NI/_{i}^G$, since in this case $\holim_{I^G}X^G$ is connected (see \ref{connfixedptsholim}).
%\item A subgroup $H\leq G_i$ for which $\hom \iota^H/i\subsetneqq \hom I^H/i$ also satisfies $\hom I^G\subsetneqq \hom I^H$. One can simplify the formula of the corollary slightly, by replacing it with the smaller connectivity range
%\[\min_{i\in Ob I}\min_{\substack{H\leq G_i\\ \hom I^G\subsetneqq \hom I^H
%}}\left(\conn X_{i}^H-\dim NI^H/{i}\right)+1\]
%\end{enumerate}
\end{rem}

\begin{proof}[Proof of\ref{connresmap}]
Let us write $K$ for the quotient of $B(I/-)$ by the subdiagram $B(\iota/-)$. It is cofibrant as the inclusion is a cofibration (cf. \ref{resfixespts}). The $\pi_0$ assumption insures that all the homotopy fibers are a space of natural transformation from $K$ to $X$ for some basepoint $\ast\rightarrow X$, as in \ref{resfixespts}. We show in \ref{connfixedptsholim} that this space of natural transformations is at least
\[\min_{i\in Ob I}\min_{\substack{H\leq G_i\\ K^{H}_i\neq\ast}}\left(\conn X_{i}^H-\dim K_{i}^H\right)\]
connected. The space $K^{H}_i$ is a point precisely when $B(I^H/{i})=B(\iota^H/{i})$. This is the case if the equality is verified in nerve level one, that is when $\hom (I^H/{i})=\hom(\iota^H/i)$.
\end{proof}

\section{The equivariant Blakers-Massey Theorem and applications}

\subsection{The equivariant Blakers-Massey Theorem}\label{secBM}

Let $G$ be a discrete group and let $J$ be a finite $G$-set. We write $\mathcal{P}(J)$ for the poset category of subsets of $J$ ordered by inclusion. The $G$-action on $J$ induces a $G$-action on the category $\mathcal{P}(J)$, by sending a subset $U$ of $J$ to its image $g(U)$ under the map $g\colon J\to J$. This action restricts to the subposets 
\[\mathcal{P}_0(J)=\mathcal{P}(J)\backslash\emptyset\ \ \ \ \ \ \ \ \ \ \ \ \ \ \ \ \mathcal{P}_1(J)=\mathcal{P}(J)\backslash J\]
A $J$-cube of spaces is a $G$-diagram $X\colon \mathcal{P}(J)\to\Top$. The initial and final vertices $X_\emptyset$ and $X_J$ have a $G$-action induced by the $G$-structure of $X$, since $\emptyset$ and $J$ are fixed objects of $\mathcal{P}(J)$. Moreover the canonical maps
\[X_{\emptyset}\rightarrow \holim_{\mathcal{P}_0(J)}X
\ \ \ \ \ \ \ \ \ \ \ \ \ \ \ \
\hocolim_{\mathcal{P}_1(J)}X\rightarrow X_{J}\]
are $G$-equivariant with respect on the $G$-actions on the homotopy limit and on the homotopy colimit of \S\ref{secGdiags}.

\begin{defn}
Let $J$ be a finite $G$-set and let $X\in\Top^{\mathcal{P}(J)}_a$ be a $J$-cube. Given a function $\nu\colon \{H\leq G\}\rightarrow \mathbb{N}$ which is invariant on conjugacy classes, we say that $X$ is $\nu$-cartesian if for every subgroup $H$ of $G$ the map of spaces
\[X_{\emptyset}^H\longrightarrow (\holim_{\mathcal{P}_0(J)}X)^H\]
is $\nu(H)$-connected. We say that $X$ is homotopy cartesian if it is $\nu$-cartesian for every function $\nu$. Dually, we say that $X$ is $\nu$-cocartesian if
\[(\hocolim_{\mathcal{P}_1(J)}X)^H\longrightarrow X_{J}^H\]
is $\nu(H)$-connected for every subgroup $H$ of $G$, and that $X$ is homotopy cocartesian if it is $\nu$-cocartesian for every $\nu$.
\end{defn}

\begin{rem}
There is a natural isomorphism of categories $\mathcal{P}(J)^H\cong \mathcal{P}(J/H)$. Under this isomorphism the fixed points diagram $X^H\colon \mathcal{P}(J)^H\to \Top$ of a $J$-cube $X$ is a $J/H$-cube. By the fixed points description of the $G$-homotopy colimit of Proposition \ref{resmapF}, a $J$-cube $X$ is $\nu$-cocartesian precisely when the $J/H$-cube $X^H$ is $\nu(H)$-cocartesian for every subgroup $H$ of $G$. Because of the failure of $G$-homotopy limit to commute with fixed points, the analogous statement does not hold for cartesian $J$-cubes (cf. \ref{resfixespts}).
\end{rem}

Given a finite $G$-set $J$ and a subgroup $H$ of $G$ we denote by $\Pa_H(J)$ the set of partitions of $J$ by $H$-subsets. This is the set of coverings $\{\emptyset\neq T_\alpha\subset J\}_{\alpha}$ of $J$ by non-empty subsets to which the $H$-action on $J$ restricts, and such that $T_\alpha$ and $T_{\alpha'}$ are disjoint for $\alpha\neq \alpha'$. For a subset $U\subset J$ let $\Eff_H(U)$ be the set of subgroups $L\leq H$ ``that act effectively on $U$'', defined by
\[\Eff_H(U)=\left\{\begin{array}{ll}
\{L\leq H_U\}&\mbox{if } H_U\neq H\\
\\
\{L\leq H \ |\ U/L \neq U/H\}&\mbox{if } H_U= H
\end{array}\right.\]
Here $H_U$ denotes the stabilizer group in $H$ of the object $U$ of $\mathcal{P}(J)$. This is the largest subgroup of $H$ whose action on $J$ restricts to $U$.
The important point of this definition is that the elements of $\Eff_H(U)$ are all proper subgroups of $H$.

\begin{theorem}\label{BM}
Let $X\in Top^{\mathcal{P}(J)}_a$ be a $J$-cube, and suppose that for every subgroup $H$ of $G$ and every non-empty $H$-subset $U\subset J$ the restriction $X|_{\mathcal{P}(U)}$ is $\nu^U$-cocartesian for some function $\nu^U\colon \{K\leq H\}\rightarrow \mathbb{Z}$. Suppose moreover that these functions satisfy $\nu^U\leq \nu^V$ whenever $U\subset V$ are non-empty $H$-subsets of $J$. Then $X$ is $\nu$-cartesian, where $\nu$ is the function
\[\nu(H)=\!\min\left\{\!\min_{\{T_\alpha\}\in\Pa_H(J)}\{
\sum_{\alpha}\nu^{T_\alpha}(H)\}-|J/H|+1\ ,\ \min_{\emptyset\neq U\subset J}\min_{L\in \Eff_H(U)}\big\{\conn X^{L}_U-|U/L|+1\big\}
\right\}\]
\end{theorem}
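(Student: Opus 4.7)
My plan is to reduce the theorem to two ingredients already available: the classical non-equivariant Blakers--Massey theorem applied to each fixed-points cube $X^H$, and the connectivity estimate of Corollary \ref{connresmap} for the restriction map $(\holim_I X)^H\to \holim_{I^H} X^H$. Concretely, for each subgroup $H\leq G$ I factor the $H$-fixed points of the cartesian map as
\[X_\emptyset^H\xrightarrow{\ f\ }\left(\holim_{\mathcal{P}_0(J)}X\right)^H\xrightarrow{\ g\ }\holim_{\mathcal{P}_0(J)^H} X^H,\]
whose composite $gf$ is, under the natural isomorphism $\mathcal{P}(J)^H\cong\mathcal{P}(J/H)$, the canonical cartesian map of the $(J/H)$-cube $X^H$. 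If $gf$ is $k$-connected and $g$ is $l$-connected, then the long exact sequence associated to the fibre sequence $\hofib(f)\to \hofib(gf)\to \hofib(g)$ forces $f$ to be $\min(k,l-1)$-connected, and the theorem will follow by identifying $k$ with the first term of $\nu(H)$ and $l-1$ with the second.

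To bound the composite $gf$ I interpret the hypotheses via Proposition \ref{resmapF}: for every non-empty $H$-invariant $U\subseteq J$, the cocartesianness of $X|_{\mathcal{P}(U)}$ on $H$-fixed points is equivalent to the cocartesianness of the subcube $X^H|_{\mathcal{P}(U/H)}$ as a $(U/H)$-cube of spaces, at level $\nu^U(H)$. Non-empty subsets $T\subseteq J/H$ correspond to non-empty $H$-invariant subsets $U\subseteq J$ via $T\leftrightarrow U=\bigsqcup_{O\in T}O$, and partitions of $J/H$ correspond bijectively to elements of $\Pa_H(J)$. Hence the classical Blakers--Massey theorem \cite[2.5]{calcII} applied to the $(J/H)$-cube $X^H$ yields the bound $k=\min_{\{T_\alpha\}\in\Pa_H(J)}\sum_\alpha\nu^{T_\alpha}(H)-|J/H|+1$, which is exactly the first term of $\nu(H)$.

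To bound $g$ I apply Corollary \ref{connresmap} with $G$ replaced by $H$ and $I=\mathcal{P}_0(J)$, whose nerve is finite-dimensional. For $U\in\Ob\mathcal{P}_0(J)$ and $L\leq H_U$, the nerve $N(\mathcal{P}_0(J)^L/U)$ has dimension $|U/L|-1$, since the longest chain of non-empty $L$-invariant subsets of $U$ is refined by adding one $L$-orbit at a time. A short case analysis identifies the condition $\hom(\iota^L/U)\subsetneq\hom(\mathcal{P}_0(J)^L/U)$ --- which asks for a non-empty $L$-invariant $V\subseteq U$ that is not $H$-invariant --- with membership in $\Eff_H(U)$: when $H_U\neq H$ the subset $V=U$ itself witnesses the condition, while when $H_U=H$ such a $V$ exists if and only if $U/L\neq U/H$. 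Corollary \ref{connresmap} then gives $l=1+\min_U\min_{L\in\Eff_H(U)}(\conn X_U^L-|U/L|+1)$, so $l-1$ is precisely the second term of $\nu(H)$ and the combination above produces the desired bound. The main technical obstacle I foresee is verifying the $\pi_0$-surjectivity hypothesis required to invoke Corollary \ref{connresmap}; by the remark following that corollary this is automatic as soon as $\conn X_U^H\geq |U/H|-1$ for every $H$-invariant $U$, and the general case should be handled by an induction on $|J|$ that reduces to this connected range.
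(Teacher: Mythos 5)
Your argument is, in all but one step, exactly the paper's proof: you factor the map on $H$-fixed points as $X_\emptyset^H\xrightarrow{f}(\holim_{\mathcal{P}_0(J)}X)^H\xrightarrow{g}\holim_{\mathcal{P}_0(J)^H}X^H$, combine the two bounds via $\conn f\geq\min(\conn gf,\conn g-1)$, estimate $gf$ by the classical Blakers--Massey theorem applied to the $J/H$-cube $X^H$ (using Proposition \ref{resmapF} to translate the cocartesianity hypotheses and the correspondence between partitions of $J/H$ and $\Pa_H(J)$), and estimate $g$ by Corollary \ref{connresmap}, with $\dim N(\mathcal{P}_0(J)^L/U)=|U/L|-1$ and the identification of the condition $\hom(\iota^L/U)\subsetneqq\hom(\mathcal{P}_0(J)^L/U)$ with $L\in\Eff_H(U)$ carried out just as in the paper.

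The one genuine gap is your treatment of the $\pi_0$-surjectivity hypothesis of Corollary \ref{connresmap}, which you explicitly leave open. Your first route imposes $\conn X_U^H\geq|U/H|-1$ for all $H$-invariant $U$, which is a condition on the vertices that is not implied by the cocartesianity hypotheses of the theorem, so it only covers a special case; your second route, ``an induction on $|J|$ that reduces to this connected range,'' is not an argument: you do not say what the inductive statement is, and shrinking $J$ does not improve the connectivity of the vertices $X_U^H$, so it is unclear how any such induction could produce the required surjectivity. The actual resolution is much simpler and needs no extra hypothesis: if $\nu(H)\leq-1$ there is nothing to prove, and otherwise the first term of the minimum is $\geq 0$, so the classical Blakers--Massey bound you have already established shows that $\psi=gf$ is $0$-connected; since $\emptyset$ is an initial object of $\mathcal{P}(J)$ fixed by $H$, the map $\psi$ factors through $(\lim_{\mathcal{P}_0(J)}X)^H$, and surjectivity of $\pi_0\psi$ therefore gives precisely the surjectivity in $\pi_0$ of the composite $(\lim_{\mathcal{P}_0(J)}X)^H\to(\holim_{\mathcal{P}_0(J)}X)^H\to\holim_{\mathcal{P}_0(J)^H}X^H$ that Corollary \ref{connresmap} asks for. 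With that observation substituted for your last sentence, your proof coincides with the one in the paper.
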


\begin{rem}
The first term of the minimum is analogous to the formula of Goodwillie's Blakers-Massey Theorem \cite[2.5]{calcII} for $n$-cubes. The second term is purely equivariant: if the $G$-action on $J$ is trivial the set of subgroups $\Eff_H(U)$ is empty. If $J$ is the set with $n$ elements and trivial $G$-action, this is the standard Blakers-Massey Theorem for $n$-cubes of $G$-spaces.  
\end{rem}

The Blakers-Massey Theorem \ref{BM} has the following dual statement.

\begin{theorem}\label{dualBM}
Let $X\in Top^{\mathcal{P}(J)}_a$ be a $J$-cube, and suppose that for every subgroup $H$ of $G$ and for every non-empty $H$-subset $U\subset J$ the cube $X|_{\mathcal{P}(U)}$ is $\nu^U$-cartesian. Suppose moreover that the functions $\nu^U\colon \{K\leq H\}\rightarrow \mathbb{N}$ satisfy $\nu^U\leq \nu^V$ whenever $U\subset V$ are non-empty $H$-subsets of $J$. Then $X$ is $\nu$-cocartesian, for
\[\nu(H)\!=\!\!\!\min_{\{T_\alpha\}\in\Pa_H(J)}\left\{
|J/H|-1+\sum_{\alpha}\min\Big\{\nu^{T_\alpha}(H)\ , \min_{\emptyset\neq U\subset T_\alpha}\min_{L\in \Eff_H(U)}\{\conn X^{L}_U-|U/L|+1\}\Big\}
\!\right\}\]
\end{theorem}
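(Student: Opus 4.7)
The plan is to handle each subgroup $H\leq G$ separately by passing to $H$-fixed points, and there to apply the classical (non-equivariant) dual Blakers-Massey theorem to the fixed-point cube $X^H$, controlling the cartesian ranges of its subcubes by combining the hypothesis of the theorem with Corollary \ref{connresmap}. Fix $H\leq G$. Proposition \ref{resmapF} identifies the $H$-fixed points of the cocartesian map of $X$ with the cocartesian map of the $J/H$-cube of spaces $X^H\colon\mathcal{P}(J/H)\to\Top$, so it suffices to show that this map is $\nu(H)$-connected. Goodwillie's non-equivariant dual Blakers-Massey theorem \cite[2.6]{calcII} then reduces the problem to providing, for every $H$-invariant $T\subseteq J$, a lower bound $c_T$ on the cartesian range of the subcube $X^H|_{\mathcal{P}(T/H)}$; the resulting cocartesian range for $X^H$ is
\[
|J/H|-1+\min_{\{T_\alpha\}\in\Pa_H(J)}\sum_{\alpha}c_{T_\alpha}.
\]

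To produce $c_T$, I factor the defining map of the cartesian range as
\[
X_\emptyset^H\longrightarrow\bigl(\holim_{\mathcal{P}_0(T)}X|_{\mathcal{P}(T)}\bigr)^H\longrightarrow\holim_{\mathcal{P}_0(T)^H}X^H|_{\mathcal{P}_0(T)^H}.
\]
The first arrow is $\nu^T(H)$-connected by the hypothesis that $X|_{\mathcal{P}(T)}$ is $\nu^T$-cartesian. The second arrow is the restriction map of Corollary \ref{connresmap} for the $H$-diagram $X|_{\mathcal{P}_0(T)}$ over $I=\mathcal{P}_0(T)$. The identifications $\mathcal{P}_0(T)^L/U\cong\mathcal{P}_0(U)^L\cong\mathcal{P}_0(U/L)$ show that the over-category has nerve of dimension $|U/L|-1$, so Corollary \ref{connresmap} yields connectivity at least $\min_{\emptyset\neq U\subseteq T}\min_{L\in\Eff_H(U)}(\conn X_U^L-|U/L|+1)$. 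Setting $c_T$ to the minimum of the two bounds and substituting into Goodwillie's formula recovers precisely the expression for $\nu(H)$ in the statement.

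The main obstacle I anticipate is a careful identification of the effectiveness condition $\hom(\iota^L/U)\subsetneq\hom(\mathcal{P}_0(T)^L/U)$ appearing in Corollary \ref{connresmap} with the membership $L\in\Eff_H(U)$: this must be verified separately in the cases $H_U=H$ and $H_U\neq H$, and the two cases should correspond exactly to the two branches in the definition of $\Eff_H(U)$. Secondary technical issues are the $\pi_0$-hypothesis of Corollary \ref{connresmap}, which is automatic whenever the connectivity bounds exceed the relevant nerve dimensions (and otherwise renders the claim vacuous), and the use of the monotonicity $\nu^U\leq\nu^V$ together with the analogous monotonicity of $\conn X_U^L$ to make the per-piece minima assemble coherently across all $H$-partitions of $J$. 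A Goodwillie-style induction on $|J|$ using Theorem \ref{BM} in place of classical Blakers-Massey at each step is an alternative route, but the fixed-points reduction above seems more economical.
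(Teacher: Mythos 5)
Your proposal is correct and follows essentially the same route as the paper's own proof: pass to $H$-fixed points via Proposition \ref{resmapF}, apply the classical dual Blakers-Massey theorem \cite[2.6]{calcII} to the $J/H$-cube $X^H$, and bound the cartesian range of each subcube $X^H|_{\mathcal{P}(T/H)}$ by factoring through the restriction map and invoking Corollary \ref{connresmap}, with the identification of the effectiveness condition with $\Eff_H(U)$ handled exactly as in the proof of Theorem \ref{BM}. No substantive differences.
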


\begin{proof}[Proof of \ref{BM}]
For each subgroup $H$ of $G$, we need to calculate the connectivity of the canonical map $\phi\colon X^{H}_\emptyset\rightarrow (\holim_{\mathcal{P}_0(J)} X)^H$ at any basepoint in $X^{H}_\emptyset$. Since the empty set is initial in $\mathcal{P}(J)^H$, a basepoint of $X^{H}_\emptyset$ defines a basepoint $\ast\rightarrow X^H$ for the whole fixed points diagram. There is a commutative diagram
\[\xymatrix@R=15pt@C=40pt{X^{H}_\emptyset\ar[r]^-\phi\ar[dr]_-\psi&(
\displaystyle\holim_{\mathcal{P}_0(J)} X)^H\ar[d]^-r\\
&\displaystyle\holim_{\mathcal{P}_0(J)^H} X^H
}
\]
where $r$ is the restriction map of Proposition \ref{resfixespts}, and $\psi$ is the canonical map for the fixed points diagram $X^H$.
Our map is as connected as
\[\conn\phi=\min\{\conn \psi,\conn r-1\}\]
The connectivity of $\psi$ expresses how cartesian the $J/H$-cube $X^H$ is, and it is determined by the standard Blakers-Massey Theorem of \cite[2.5]{calcII}. In order to apply this theorem we need to compare the cocartesianity of the subcubes of $X^H$. Under the canonical isomorphism $\mathcal{P}(J/H)=\mathcal{P}(J)^H$ a non-empty subset $U\subset J/H$ corresponds to a non-empty $H$-subsets of $J$. By assumption the cube $X^H$ is $\nu^U(H)$-cocartesian, and for  subsets $U\subset V \subset J/H$ the inequality $\nu^U(H)\leq \nu^V(H)$ holds. By \cite[2.5]{calcII} the cube $X^H$ is
\[\min_{\{T_\alpha\}}\{
\sum_{\alpha}\nu^{T_\alpha}(H)\}-|J/H|+1\]
cartesian,
where $\{T_\alpha\}$ runs over the partitions of $J/H$, which correspond to the partitions of $J$ by $H$-subsets. This shows that $\psi$ is as connected as the first term of the minimum of the statement.

Now we calculate the connectivity of the restriction map $r$.
We can assume that $\psi$ is at least $0$-connected, otherwise the connectivity range of \ref{BM} gives us no information about the cartesianity of $X$. The $\pi_0$-hypothesis of \ref{connresmap} is satisfied, since there is a commutative diagram
\[\xymatrix@R=15pt{\pi_0X^{H}_\emptyset\ar[d]
\ar[drr]^-{\pi_0\psi}
\\
\displaystyle\pi_0(\lim_{\mathcal{P}_0(J)} X)^H\ar[r]&\displaystyle\pi_0(\holim_{\mathcal{P}_0(J)} X)^H\ar[r]_-r
&\displaystyle\pi_0\holim_{\mathcal{P}_0(J)^H} X^H
}
\]
and the surjectivity of $\pi_0\psi$ implies the surjectivity of the lower composite. By \ref{connresmap} the connectivity of $r$ is
\[\min_{\emptyset\neq U\subset J}\min_{\substack{L\leq H_U\\ \hom (\iota^L/U)\subsetneqq \hom \mathcal{P}(U)^L}
}\left(\conn X_{U}^L-|U/L|+1\right)+1\]
where $\iota^L\colon \mathcal{P}(J)^H\rightarrow  \mathcal{P}(J)^L$ is the inclusion. It remains to show that if a subgroup $L$ of $H_U$ satisfies the condition $\hom (\iota^L/U)\subsetneqq \hom \mathcal{P}(U)^L$ , then $L$ belongs to $\Eff_H(U)$ (in fact, the two conditions are equivalent). If $H_U$ is different from $H$ clearly $L$ belongs to $\Eff_H(U)$ since it is a proper subgroup of $H$.
If $H_U=H$, the category $\iota^L/U$ is the full subcategory of $\mathcal{P}(J)^H$ of objects that are subsets of $U$. This is the same as the full subcategory $\mathcal{P}(U)^H$ of $\mathcal{P}(U)^L$. These are different precisely when $\mathcal{P}(U/H)\neq \mathcal{P}(U/L)$, that is when $U/H\neq U/L$.
\end{proof}

\begin{proof}[Proof of \ref{dualBM}]
For every subgroup $H$ of $G$ there is a commutative diagram 
\[\xymatrix@R=18pt@C=40pt{\displaystyle(\hocolim_{\mathcal{P}_1(J)}X)^H\ar[dr]\ar[r]^-{\cong}& \displaystyle\hocolim_{\mathcal{P}_1(J)^H}X^H\ar[d]\\
&X^{H}_J}\]
where the homeomorphism is from Proposition \ref{resmapF}. Since $\mathcal{P}_1(J)^H$ is isomorphic to $\mathcal{P}_1(J/H)$, we need to determine how cocartesian the $J/H$-cube $X^H$ is. By the dual Blakers-Massey Theorem \cite[2.6]{calcII}, it is
\[\min_{\{T_\alpha\}\in\Pa_H(J)}\left\{
|J/H|-1+\sum_{\alpha}\omega^{T_\alpha}(H)\right\}\]
cocartesian, if each restriction $X^H|_{\mathcal{P}(T_\alpha/H)}$
is $\omega^{T_\alpha}(H)$-cartesian. To determine $\omega^{T_\alpha}(H)$, consider the commutative diagram
\[\xymatrix@R=18pt@C=40pt{
X_{\emptyset}^H\ar[dr]\ar[r]&\displaystyle(\holim_{\mathcal{P}_0(T_\alpha)}X)^H\ar[d]^-{r}\\
&\displaystyle\holim_{\mathcal{P}(T_\alpha/H)}X^H
}\]
where the vertical map is the restriction. By hypothesis the horizontal map is $\nu^{T_\alpha}(H)$-connected. The connectivity of $r$ is calculated using \ref{connresmap} just as we did in the proof of \ref{BM}, showing that
\[\omega^{T_\alpha}(H)=\min\Big\{\nu^{T_\alpha}(H)\ , \ \min_{\emptyset\neq U\subset T_\alpha}\min_{L\in \Eff_H(U)}\{\conn X^{L}_U-|U/L|+1\}\Big\}\]
\end{proof}

\subsection{The equivariant Freudenthal suspension Theorem}\label{secsusp}

Let $G$ be a discrete group and let $J$ be a finite $G$-set. We let $S^J$ be the permutation representation sphere of $J$, defined as the one-point compactification $\mathbb{R}[J]^+$. Given a pointed $G$-space $X$, we define its $J$-loop and $J$-suspension respectively as the space of pointed maps and the smash product
\[\Omega^J X=\map_\ast(S^J,X)\ \ \ \ \ \ \ \ \ \ \ \ \ \ \Sigma^JX=X\wedge S^J\]
with $G$-action by conjugation on $\Omega^ JX$ and diagonal on $\Sigma^ JX$. The pair of functors $\Sigma^J\colon \Top^{G}_\ast\leftrightarrows\Top^{G}_\ast\colon \Omega^J$ forms an adjunction. The equivariant Freudenthal suspension Theorem (\cite{Nam}, see also \cite{Lewis},\cite{Adams}) calculates the connectivity of the unit of this adjunction
\[X^G\longrightarrow (\Omega^J\Sigma^ JX)^G\]
restricted to the $G$-fixed points spaces. We show that his theorem is a direct consequence of the equivariant Blakers-Massey Theorem \ref{BM}. The relationship between the Blakers-Massey theorem and the Freudenthal suspension Theorem is a well-established fact when $J$ has the trivial $G$-action.

\begin{cor}[\cite{Nam}]\label{susp} Let $X$ be a pointed $G$-space. The unit of the $(\Sigma^{J},\Omega^{J})$-adjunction $\eta\colon X^G\to (\Omega^J\Sigma^{J}X)^G$ is
\[\min\left\{2\conn X^G+1,\min_{\substack{H\leq G\\ J/H\neq J/G}}\conn X^H\right\}\]
connected.
\end{cor}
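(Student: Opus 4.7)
The strategy is to apply the equivariant Blakers-Massey Theorem \ref{BM} to a carefully chosen equivariant cube. Let $J_+=J\sqcup\{*\}$ denote $J$ with a disjoint basepoint fixed by $G$. I would first construct a $J_+$-cube $Y\colon\mathcal{P}(J_+)\to\Top_*$ with $G$-structure having $Y_\emptyset=X$, $Y_{J_+}\simeq\Sigma^J X$, and every intermediate vertex $Y_U$ (for $\emptyset\neq U\neq J_+$) $G_U$-equivariantly contractible. A concrete construction takes $Y=X\wedge\bigwedge_{j\in J_+}C_j$ where each $C_j$ is the pointed $1$-cube $S^0\hookrightarrow I$ (with $I$ the pointed unit interval), extended so that the terminal vertex $Y_{J_+}$ is the homotopy colimit $\hocolim_{\mathcal{P}_1(J_+)}Y$; this equivariantly agrees with $\Sigma^J X$ because iterating $|J_+|-1=|J|$ cofiber operations on $X$ produces $X\wedge S^J$ with the permutation $G$-action. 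The $G$-action on $J_+$ permutes the smash factors, yielding the required $G$-structure.

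With this cube in hand, I would identify $\holim_{\mathcal{P}_0(J_+)}Y\simeq\Omega^J\Sigma^J X$ as $G$-spaces: since all middle vertices of $Y$ are equivariantly contractible, the homotopy limit reduces inductively to a $J$-fold iterated loop space of $Y_{J_+}=\Sigma^J X$, and the $G$-action matches the permutation action on $S^J$. The canonical map $\phi\colon X=Y_\emptyset\to\holim_{\mathcal{P}_0(J_+)}Y$ then corresponds, after restriction to $G$-fixed points, to the Freudenthal unit $\eta\colon X^G\to(\Omega^J\Sigma^J X)^G$.

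To apply \ref{BM} I would compute the cocartesianness functions. For each non-empty $H$-invariant subset $U\subsetneq J_+$, the sub-cube $Y|_{\mathcal{P}(U)}$ has initial vertex $X$ and all other vertices contractible, so on $L$-fixed points ($L\leq H$) it is $(\conn X^L+|U/L|)$-cocartesian by iterated suspension; for $U=J_+$ the cube is homotopy cocartesian by construction, so $\nu^{J_+}(L)=\infty$. Plugging $\nu^U(L)=\conn X^L+|U/L|$ into the first term of \ref{BM} and optimizing over $G$-partitions of $J_+$, the minimum is attained by the partition $\{J,\{*\}\}$, yielding $(\conn X^G+|J/G|)+(\conn X^G+1)-|J_+/G|+1=2\conn X^G+1$. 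For the second term, middle vertices $U\subsetneq J_+$ are contractible and contribute $\infty$, while $U=J_+$ gives $\conn(\Sigma^J X)^L-|J_+/L|+1=\conn X^L$, using $(\Sigma^J X)^L\simeq X^L\wedge S^{J/L}$ and $|J_+/L|=|J/L|+1$. Since $\Eff_G(J_+)$ consists exactly of proper subgroups $L\leq G$ with $J/L\neq J/G$, this yields $\min_{L:\,J/L\neq J/G}\conn X^L$, producing the stated Freudenthal range.

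The main obstacle will be the construction and analysis of the cube $Y$: ensuring that $Y_{J_+}$ carries precisely the permutation representation $G$-action so that the hocolim identification with $\Sigma^J X$ is $G$-equivariant, and checking that the minimum over partitions in \ref{BM} is truly achieved by $\{J,\{*\}\}$ and not by a finer orbit partition (which can be verified directly, since finer partitions only add more $(\conn X^G+1)$-terms without improving the bound). The identification $(\Sigma^J X)^L\simeq X^L\wedge S^{J/L}$ also deserves verification, resting on the general fact that the fixed points of the permutation sphere satisfy $(S^{\mathbb{R}[J]})^L=S^{\mathbb{R}[J/L]}=S^{J/L}$ together with the compatibility of smash products with fixed points for pointed $L$-CW spaces.
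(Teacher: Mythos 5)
Your proposal is correct and follows essentially the same route as the paper: both build the $J_+$-cube with $X$ initial, $\Sigma^J X$ terminal and $G_U$-contractible cones in between (your smash-with-intervals model is the same construction as the paper's iterated reduced cones $C^U X$), identify the punctured homotopy limit with $\Omega^J\Sigma^J X$, and feed the cocartesianness functions $\nu^U(H)=\conn X^H+|U/H|$ (with $\nu^{J_+}=\infty$) into Theorem \ref{BM}, extracting $2\conn X^G+1$ from the partition $\{J,+\}$ and the purely equivariant term from $\Eff_G(J_+)$ at $U=J_+$. No substantive differences to report.
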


\begin{proof}
The idea of the proof is to construct the loop space $\Omega^J\Sigma^JX$ as the $G$-homotopy limit of an equivariant cube. Let $J_+$ be the $G$-set $J$ with an added fixed basepoint.
Define a $J_+$-cube $\sigma^JX\colon \mathcal{P}(J_+)\to \Top_\ast$ with vertices \[(\sigma^JX)_U=\left\{\begin{array}{lll} X &, U=\emptyset\\
C^UX &, \emptyset\neq U\subsetneqq J_+\\
\Sigma^JX&, U=J_+
\end{array}\right.\]
where $C^UX$ is the $U$-fold reduced cone of $X$, defined as
\[C^UX=\hocolim_{\mathcal{P}(U)}\big(V\longmapsto\left\{\begin{array}{cl}X&, V=\emptyset\\
\ast&,\mbox{otherwise}
\end{array}\right.\big)\]
This homotopy colimit is performed in the category of pointed spaces. The functor $\sigma^JX$ has a natural $G$-structure. It is defined at the initial and final vertices by the $G$-actions on $X$ and $\Sigma^JX$ respectively, and at the other vertices by the canonical isomorphism
\[C^UX\longrightarrow C^{gU}X\]
induced by the functor $g\colon \mathcal{P}(U)\to\mathcal{P}(gU)$. For the group $G=\mathbb{Z}/2$ and the $G$-set $J=\mathbb{Z}/2$ this is the cube
\[\xymatrix@=12pt{X\ar[rr]\ar[dd]\ar[dr]
&&CX\ar@{<-->}[dl]\ar[dr]\ar@{|}[d]\\
&CX\ar[dd]\ar[rr]&\ar[d]&C^2X\ar[dd]\\
CX\ar@{-}[r]\ar[dr]&\ar[r]&C^2X\ar@{<-->}[dl]\ar[dr]\\
& C^2X\ar[rr]&&\Sigma^{2,1}X
}\]
The dashed maps denote the $G$-structure between the non-fixed vertices, and $\Sigma^{2,1}$ is the suspension by the regular representation of $\mathbb{Z}/2$.
The space $C^UX$ is $G_U$-contractible, since for every subgroup $H$ of $G_U$ Proposition \ref{resmapF} provides a homeomorphism
\[(C^UX)^H\cong C^{U/H}(X^H)\simeq\ast\]
This shows that the restriction of $\sigma^JX$ to $\mathcal{P}_0(J_+)$ is equivalent to the $G$-diagram $\omega^JX\colon\mathcal{P}_0(J_+)\to\Top$ defined by
\[(\omega^JX)_U=\left\{\begin{array}{ll} \ast&, U\neq J_+\\
\Sigma^JX &, U=J_+
\end{array}\right.\]
via the obvious pointed map $\omega^JX\to\sigma^JX|_{\mathcal{P}_0(J_+)}$.
The homotopy limit of $\omega^JX$ is $G$-homeomorphic to $\Omega^J\Sigma^JX$, by inspection on the definition of the Bousfield-Kan formula. This gives a commutative diagram of pointed $G$-spaces
\[\xymatrix@C=8pt{X=(\sigma^JX)_\emptyset\ar[rrrrr]\ar[drrrr]_\eta
&&&&&
\displaystyle\holim_{\mathcal{P}_0(J_+)}\sigma^JX\\
&&&&\Omega^J(\Sigma^JX)\ar@<-1ex>@{}[r]^-\cong& \displaystyle\holim_{\mathcal{P}_0(J_+)}\omega^JX\ar[u]_{\simeq}}\]
where the vertical map is a $G$-equivalence by homotopy invariance of $G$-homotopy limits. The unit map $\eta$ is then as connected as $\sigma^JX$ is cartesian.

We show that $\sigma^JX$ satisfies the condition of the Blakers-Massey Theorem \ref{BM}, and we show that its cartesianity estimate is the same as the range claimed in \ref{susp}.
For each $G$-subset $U$ of $J_+$, we need to find an estimate $\nu^U$ for the cocartesianity of the restriction $\sigma^JX|_{\mathcal{P}(U)}$. For every subgroup $H$ of $G_U$, there are isomorphisms
\[(\hocolim_{\mathcal{P}_1(U)}\sigma^JX)^H\cong \hocolim_{\mathcal{P}_1(U/H)}(\sigma^JX)^H\cong \hocolim_{\mathcal{P}_1(U/H)}\sigma^{J/H}X^H\]
by the fixed points description of \ref{resmapF}.
Since all the vertices of $\sigma^{J/H}X^H|_{\mathcal{P}_1(U/H)}$ are contractible except for the initial one, there is a natural equivalence 
\[(\hocolim_{\mathcal{P}_1(U)}\sigma^{J}X)^H\cong
\hocolim_{\mathcal{P}_1(U/H)}\sigma^{J/H}X^H\stackrel{\simeq}{\longrightarrow} \Sigma^{|U/H|-1}X^H\]
Hence the canonical map $(\hocolim_{{\mathcal{P}_1(U)}} \sigma^JX)^H\to (\sigma^JX)^{H}_U$ factors through the equivalence
\[(\hocolim_{\mathcal{P}_1(U/H)}\sigma^{J}X)^H\stackrel{\simeq}{\longrightarrow}\Sigma^{|U/H|-1}X^H\longrightarrow (\sigma^{J}X)_{U}^H=\left\{\begin{array}{ll}
C^{U/H}X^H&, U\neq J_+\\
\Sigma^{|J/H|}X^H &,U= J_+
\end{array}\right.\]   
This map is $(\conn X^H+|U/H|)$-connected for $U\neq J_+$ and it is an equivalence for $U=J_+$. It follows that $\sigma^JX|_{\mathcal{P}(U)}$ is $\nu^U$-cocartesian for the function
\[\nu^U(H)=\left\{\begin{array}{ll}
\conn X^H+|U/H|&, U\neq J_+\\
\infty &,U= J_+
\end{array}\right.\]
These functions satisfy $\nu^U\leq \nu^V$ for $H$-subsets $U\subset V$ of $J_+$, and the equivariant Blakers-Massey Theorem \ref{BM} applies. The first term of the minimum in the range of \ref{BM} for the group $H=G$ is
\[\min_{\{T_\alpha\}\in\Pa_G(J_+)}\{
\sum_{\alpha}\nu^{T_\alpha}(G)\}-|J_+/G|+1=\min_{\{J_+\}\neq\{T_\alpha\}_{\alpha\in A}}\{
\sum_{\alpha}(|T_\alpha/G|)+|A|\conn X^G\}-|J_+/G|+1\]
The trivial partition $\{J_+\}$ is removed from the minimum because $\nu^{J_+}=\infty$. For any partition $\{T_\alpha\}$ of $J_+$ by $G$-sets, the quotient $J_+/G$ decomposes as the disjoint union of the quotients $T_\alpha/G$. Therefore the sum in the formula above is equal to $|J_+/G|$. The minimum is thus realized when the size of the partition $|A|$ is minimal. This is the partition with two elements $\{J,+\}$, and the quantity above is
\[\min_{\{J_+\}\neq\{T_\alpha\}_{\alpha\in A}}\{|J_+/G|
+|A|\conn X^G\}-|J_+/G|+1=2\conn X^G+1\]
The second term of the minimum in the formula of Theorem \ref{BM} is 
\[\min_{\emptyset\neq U\subset J_+}\min_{H\in \Eff_G(U)}\conn (\sigma^JX)^{H}_U-|J_+/H|+1\]
All the vertices of $\sigma^JX$ have contractible fixed points except for the initial and the final one. The outer minimum is then realized when $U=J_+$, with value
\[\min_{H\in \Eff_G(J_+)}\conn (S^J\wedge X)^{H}-|J_+/H|+1=\min_{H\in \Eff_G(J_+)}\conn X^H\]
The set $\Eff_G(J_+)$ is by definition the set of subgroups $H$ of $G$ for which $J_+/H\neq J_+/G$, which is the same condition as $J/H\neq J/G$.
\end{proof}

\subsection{Equivariant intersections of submanifolds and configuration spaces}\label{secconf}

Let $M$ be a manifold and let $P_1,\dots,P_n$ be a collection of submanifold of $M$. For every integer $2\leq k\leq n$, we define inductively what it means for the collection $P_1,\dots,P_n$ to be $k$-transverse in $M$. Let us denote $\underline{n}=\{1,\dots n\}$.
\begin{enumerate}[label=\roman*)]
\item $P_1,\dots,P_n$ is $2$-transverse if $P_i$ intersects $P_j$ transversally for every $i\neq j$.
\item $P_1,\dots,P_n$ is $k$-transverse if it is $(k-1)$-transverse and if for every set $S\subset \underline{n}$ with $k$ elements and every $s\in S$ the submanifold $\bigcap_{t\in S\backslash s}P_t$ intersects $P_s$ transversally.
\end{enumerate}
Let $m$ be the dimension of $M$, and let $d_i$ be the dimension of $P_i$.
If the collection of submanifolds $P_1,\dots,P_n$ is $n$-transverse, the intersection $\bigcap_{s\in S}P_s$ is either a submanifold of $M$ of codimension $\sum_{s\in S}(m-d_s)$, or it is the empty submanifold.

Now suppose that $G$ is a discrete group, let $J$ be a finite $G$-set, and suppose that $G$ acts properly on a manifold $M$. Let $\{P_j\}_{j\in J}$ be a set of submanifolds indexed on $J$, and suppose that for every group element $g$ the corresponding automorphism of $M$ restricts to a map $g\colon P_j\to P_{gj}$. The $G$-action on $M$ defines a $G$-structure on the cube $M\backslash P_{\bullet}\colon\mathcal{P}(J)\to \Top$ which sends a subset $U\subset J$ to the complement $M\backslash\bigcup_{j\notin U}P_j$. The $G$-structure is defined by the maps
\[g\colon M\backslash\bigcup_{j\notin U}P_j\longrightarrow M\backslash \bigcup_{j\notin U}g(P_j)=M\backslash\bigcup_{j\notin U}P_{gj}=M\backslash\bigcup_{j\notin gU}P_{j}\]

\begin{theorem}\label{intsubman}
Let $M$ be a manifold with proper $G$-action, and let $\{P_j\}_{j\in J}$ be a set of closed submanifolds as above. Suppose moreover that for every $j\in J$ and every subgroup $H$ of $G$ the intersection $P_j\cap M^H$ is a submanifold of $M$, and that the collection of submanifolds $\{P_j\cap M^H\}_{j\in J}$ is $|J|$-transverse in $M^H$. Then
the $J$-cube $M\backslash P_{\bullet}\colon\mathcal{P}(J)\to \Top$ is $\nu$-cartesian, where $\nu$ is the function
\[\min\Big\{\sum_{j\in J}\big(m_H-d_j(H)\big)-2|J/H|+1\ ,
\min_{\substack{
L\leq H\\
J/L\neq J/H
}}\big\{\min\{\conn M^L+1,m_L-\max_{j\in J}d_j(L)\}-|J/L|\big\}
 \Big\}\]
Here $d_j(H)$ is the dimension of $P_j\cap M^H$, and $m_H$ is the dimension of $M^H$.
\end{theorem}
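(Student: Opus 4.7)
The plan is to apply the equivariant Blakers--Massey Theorem \ref{BM} to the $J$-cube $X = M \setminus P_\bullet$. I will (i) bound the cocartesianity $\nu^U$ of each restriction $X|_{\mathcal{P}(U)}$ for every non-empty $H$-invariant $U$, (ii) bound $\conn X^L_U$ at every vertex, and (iii) specialise the two minima in the conclusion of \ref{BM} to recover the formula for $\nu(H)$ stated in the theorem.

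The cornerstone of step (i) is the following transversality observation. For any subgroup $K \leq G$ and any $x \in P_j \cap M^K$ one has $x = kx \in k \cdot P_j = P_{kj}$ for every $k \in K$, so $P_j \cap M^K = P_{kj} \cap M^K$ whenever $j$ and $kj$ lie in the same $K$-orbit; by $|J|$-transversality, two distinct members of $\{P_j \cap M^K\}_{j \in J}$ cannot coincide unless both are empty, so $P_j \cap M^K = \emptyset$ whenever $j$ has a non-trivial $K$-orbit. Adopting the convention $d_j(K) = -\infty$ in that case, Proposition \ref{resmapF} identifies $(X|_{\mathcal{P}(U)})^K$ with the non-equivariant $(U/K)$-cube $V \mapsto M^K \setminus \bigcup_{j \in J \setminus V}(P_j \cap M^K)$, which by the above factors through the projection $\mathcal{P}(U/K) \to \mathcal{P}(U^K)$. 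This cube therefore has a constant direction (and is $\infty$-cocartesian) whenever $U \not\subseteq J^K$, while for $U \subseteq J^K$ it is the classical cube of complements of the $|U|$ transverse submanifolds $\{P_j \cap M^K\}_{j \in U}$ of $M^K$, whose cocartesianity is $\sum_{j \in U}(m_K - d_j(K)) - 1$ by the Munson--Volic theorem for complements of submanifolds. Both cases unify as $\nu^U(K) = \sum_{j \in U}(m_K - d_j(K)) - 1$, and the monotonicity $\nu^U \leq \nu^V$ for $U \subseteq V$ is immediate. For step (ii), the standard $(c-1)$-connectivity of $M^L \setminus N \hookrightarrow M^L$ for a closed union $N$ of submanifolds of minimum codimension $c$ gives $\conn X^L_U \geq \min\{\conn M^L,\; m_L - \max_{j \notin U} d_j(L) - 2\}$.

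For step (iii), in the first minimum of \ref{BM} the contribution $\nu^{T_\alpha}(H)$ equals $+\infty$ whenever $T_\alpha$ contains a non-singleton $H$-orbit, so a finite sum forces $J \subseteq J^H$; the minimum over partitions is then attained at the singleton partition and yields $\sum_{j \in J}(m_H - d_j(H)) - |J/H|$, and subtracting $|J/H| - 1$ produces the first term of $\nu(H)$, which is (consistently) $+\infty$ whenever $H$ acts non-trivially on $J$. For the second minimum, the two extremal choices $U = J$ and $U = J \setminus \{j^\ast\}$, with $j^\ast \in J^L$ realising $\max_j d_j(L)$ (automatically $L$-fixed by the transversality observation applied to $L$), give $\conn M^L + 1 - |J/L|$ and $\min\{\conn M^L + 2,\; m_L - \max_j d_j(L)\} - |J/L|$ respectively, using $|U/L| \leq |J/L| - 1$ for $U \subsetneq J$; a short check shows $L \in \Eff_H(U)$ for these $U$ whenever $J/L \neq J/H$, and any other $U \subsetneq J$ yields a weaker bound. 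Taking the minimum of these two and then over $L \leq H$ with $J/L \neq J/H$ collapses to $\min\{\conn M^L + 1,\; m_L - \max_j d_j(L)\} - |J/L|$, matching the second term of $\nu(H)$. The most delicate point is the transversality lemma above: it is the input that allows the naive orbit-indexed sum to be rewritten as the stronger element-indexed sum appearing in the first term of $\nu(H)$.
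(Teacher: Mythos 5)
Your proof is correct and follows the same strategy as the paper: feed the cube $M\setminus P_\bullet$ into Theorem \ref{BM}, with the cocartesianity $\nu^U(K)=\sum_{u\in U}(m_K-d_u(K))-1$ coming from the codimension of the transverse intersection $\bigcap_{u\in U}(P_u\cap M^K)$, and then specialize the two minima. The one place where your bookkeeping genuinely differs is the second minimum: you index the submanifolds removed at the vertex $X_U$ by $j\notin U$ and use the connectivity bound $\min\{\conn M^L,\,c-2\}$ for the complement of a codimension-$c$ union, which forces you to combine the two extremal cases $U=J$ and $U=J\setminus\{j^\ast\}$ (using $|U/L|\le |J/L|-1$ for proper $U$); the paper's write-up instead locates the minimum at the single value $U=J$ with a $c-1$ estimate, and the two computations compensate to land on the same formula. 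Your preliminary observation that $P_j\cap M^K=\emptyset$ whenever $j$ has a nontrivial $K$-orbit (forced by transversality of coinciding submanifolds) is left implicit in the paper and is a worthwhile clarification of how $d_j(H)$ and the first term of $\nu(H)$ are to be read when these intersections are empty.
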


\begin{proof}
In order to understand how cartesian $M\backslash P_{\bullet}$ is it is sufficient, by the Blakers-Massey theorem, to understand how cocartesian the subcubes of $M\backslash P_{\bullet}$ are. Let $H$ be a subgroup of $G$ and let $U$ be an $H$-invariant subset of $J$. The homotopy colimit of the restriction of $M\backslash P_{\bullet}$ to $\mathcal{P}_1(U)$ is $H$-equivalent to
\[\hocolim_{\mathcal{P}_1(U)}M\backslash P_{\bullet}\stackrel{\simeq}{\longrightarrow}\colim_{\mathcal{P}_1(U)}M\backslash P_{\bullet}
=\big(M\backslash\bigcup_{j\notin U}P_j\big)\backslash\bigcap_{u\in U}P_u\]
Hence the cube $(M\backslash P_{\bullet})|_{\mathcal{P}(U)}^{H}$ is $\nu^U(H)$-cocartesian, where $\nu^U(H)$ is the connectivity of the inclusion 
\[\nu^U(H)=\conn\Big(
\big(M\backslash\bigcup_{j\notin U}P_j\big)\backslash\bigcap_{u\in U}P_u\longrightarrow M\backslash\bigcup_{j\notin U}P_j
\Big)^H\]
of $H$-fixed points. This map is the inclusion of submanifolds
\[\big(M^H\backslash(\bigcup_{j\notin U}P_j)^H\big)\backslash(\bigcap_{u\in U}P_u)^H\longrightarrow M^H\backslash(\bigcup_{j\notin U}P_j)^H\]
which is as connected as the codimension of $(\bigcap_{u\in U}P_u)^H$ in $M^H$ minus one. By our transversality assumption $(\bigcap_{u\in U}P_u)^H=\bigcap_{u\in U}P_u\cap M^H$ is a submanifold of $M^H$ of codimension $\sum_{u\in U}\big(m_H-d_u(H)\big)$. The functions
\[\nu^U(H)=\sum_{u\in U}\big(m_H-d_u(H)\big)-1\]
satisfy $\nu^U\leq \nu^V$ when $U\subset V$ are both $H$-invariant, and Theorem \ref{BM} applies. The first term of the minimum of \ref{BM} is
\[\min_{\{T_\alpha\}_{\alpha\in A}\in\Pa_H(J)}\Big\{
\sum_{\alpha\in A}\sum_{u\in T_\alpha}\big(m_H-d_u(H)\big)-|A|\Big\}-|J/H|+1\]
The double sum is independent of the partition, and this minimum is realized for the finest partition of $J$ by $H$-invariant sets. This is the partition of $J$ in $H$-orbits, and the term above is
\[\sum_{j\in J}\big(m_H-d_j(H)\big)-2|J/H|+1\]
The second term of the minimum of the Blakers-Massey formula is
\[\min_{\emptyset\neq U\subset J}\min_{L\in \Eff_H(U)}\big\{\conn M^L\backslash (\bigcup_{u\in U}P_u)^L-|U/L|+1\big\}
\]
The connectivity of the complement of $(\bigcup_{u\in U}P_u)^L=\bigcup_{u\in U}(P_u\cap M^L)$ in $M^L$ is 
\[\conn M^L\backslash (\bigcup_{u\in U}P_u)^L=\min\Big\{\conn M^L,\min_{u\in U}\big(m_L-d_u(L)\big)-1\Big\}\]
The double minimum above is  thus realized when $U=J$, with value
\[\min_{L\in \Eff_H(J)}\big\{\min\Big\{\conn M^L,m_L-1-\max_{j\in J}d_j(L)\Big\}-|J/L|+1\big\}
\]
Moreover $\Eff_H(J)$ is the collection of proper subgroups $L$ of $H$ with $J/L\neq J/H$.
\end{proof}

This Theorem has interesting consequences for equivariant cubes of configuration spaces. What follows is an equivariant version of the multiple disjunction Theorem \cite{GK} in the easy situation where the submanifolds $P_j$ are points. The techniques of \cite{GK} can supposedly be extended to equivariant collections of higher dimensional submanifolds.
Given a finite $G$-set $J$ and a proper $G$-manifolds $M$, we let $\conf(J,M)$ be the space of ordered configurations of $|J|$-points in $M$, with $G$ acting by conjugation. Explicitly, a configuration is an injective map $\underline{x}\colon J\to M$, which is sent by the homeomorphism associated to $g\in G$ to the composite $g\underline{x}g^{-1}$.
The cube $\conf\big(J\backslash(-),M\big)\colon \mathcal{P}(J)\to Top$ that sends a subset $U$ of $J$ to the configurations of $J\backslash U$-points has a $G$-structure defined by the maps $g\colon \conf\big(J\backslash U,M\big)\to \conf\big(J\backslash gU,M\big)$ which send $\underline{x}\colon U\to M$ to
\[g\underline{x}\colon J\backslash g(U)\stackrel{g^{-1}}{\longrightarrow}J\backslash U\stackrel{\underline{x}}{\longrightarrow}M\stackrel{g}{\longrightarrow}M\]
\begin{cor}\label{conf}
Let $J$ be a finite $G$-set and let $J_+$ be the $G$-set $J$ with an added fixed basepoint. The $J_+$-cube $\conf(J_+\backslash (-),M)\colon \mathcal{P}(J_+)\to Top$ is $\nu$-cartesian, where $\nu$ is the function
\[\nu(H)=\min\Big\{|J|m_H-2|J/H|+1\ ,
\min_{\substack{
L\leq H\\
J/L\neq J/H
}}\big\{\min\{\conn M^L+1,m_L\}-|J/L|\big\}
 \Big\}\]
and $m_H$ is the dimension of the fixed points manifold $M^H$.
\end{cor}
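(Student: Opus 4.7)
The plan is to deduce the Corollary from Theorem \ref{intsubman} by reducing the $J_+$-cube of configurations to a $J$-cube of complements of points, via a homotopy-fiber reduction in the $+$-direction. Under the canonical decomposition $\mathcal{P}(J_+)\cong\mathcal{P}(J)\times\mathcal{P}(\{+\})$, the $J_+$-cube is equivalently the natural transformation of $J$-cubes
\[
\pi\colon \conf\bigl((J\backslash V)\cup\{+\},M\bigr)\longrightarrow \conf(J\backslash V,M),\qquad V\subset J,
\]
given pointwise by forgetting the $+$-coordinate. Each $\pi_V$ is a locally trivial fibration whose homotopy fiber over $\underline{y}\in \conf(J\backslash V,M)$ is the complement $M\backslash \underline{y}(J\backslash V)$.

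Fix a subgroup $H\le G$. If $\conf(J_+,M)^H=\emptyset$ the claim for $H$ is vacuous, so assume it is nonempty and pick an $H$-equivariant basepoint $\underline{y}\in \conf(J,M)^H$. By the classical hofiber principle for cubes—a $(J\cup\{+\})$-cube is $k$-cartesian iff its $J$-cube of $+$-direction homotopy fibers is $k$-cartesian—applied on each $H$-fixed-point stratum at the basepoint $\underline{y}$, the $H$-cartesianity of the configuration $J_+$-cube coincides with that of the $J$-cube
\[
Y\colon V\longmapsto M\backslash \underline{y}(J\backslash V).
\]
The cube $Y$ is precisely the cube $M\backslash P_\bullet$ of Theorem \ref{intsubman} for the collection of points $P_j:=\{y_j\}$. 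The equivariance $g\cdot P_j=P_{gj}$ is inherited from the equivariance of $\underline{y}$ (working with the group $H$ in place of $G$ when no global $G$-equivariant basepoint exists), and the transversality hypothesis holds automatically since the intersection of any two distinct points is empty.

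Applying Theorem \ref{intsubman} to $Y$ with the convention $d_j(L)=0$ for every $j$ (points have dimension zero on any stratum where they appear; when $y_j\notin M^L$ the intersection $P_j\cap M^L$ is empty and the corresponding sub-inclusion becomes the identity, so the uniform estimate $\nu^U(L)=|U|m_L-1$ from the proof of \ref{intsubman} remains a valid lower bound for cocartesianity), the first term of the resulting $\nu(H)$ evaluates to $\sum_{j\in J}m_H-2|J/H|+1=|J|m_H-2|J/H|+1$, and the second term evaluates to $\min_{L\le H,\,J/L\ne J/H}\bigl\{\min\{\conn M^L+1,\,m_L\}-|J/L|\bigr\}$, exactly matching the formula in the statement. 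The main obstacle is to make the equivariant fiber reduction rigorous when no global $G$-equivariant basepoint exists—this is handled by running the argument one subgroup $H$ at a time, and by carefully justifying the uniform convention $d_j(L)=0$ against the literal hypotheses of Theorem \ref{intsubman}, noting that the passage from the actual cocartesianities (which are infinite when intersections become empty) to the formula-friendly lower bounds preserves all the estimates used in the Blakers-Massey input to \ref{intsubman}.
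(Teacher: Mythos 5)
Your proposal is correct and follows essentially the same route as the paper: reduce the $J_+$-cube to the $J$-cube of homotopy fibers of the maps forgetting the $+$-coordinate (the paper organizes this by a chosen configuration $\underline{x}$ with its stabilizer $G_{\underline{x}}$, you organize it one subgroup $H$ at a time with an $H$-fixed basepoint, which amounts to the same thing), identify those fibers with complements of the point-images, and feed the resulting cube into Theorem \ref{intsubman} with $d_j=0$. Your extra remark about the degenerate case $y_j\notin M^L$ (empty intersection, identity inclusion, so the stated bound is still a valid lower bound) is a point the paper glosses over, and your accounting of both terms of $\nu(H)$ matches the paper's.
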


Because of the second term of the minimum the formula is meaningful only when the connectivity of $M$ is greater than the number of points in the configuration. Therefore one cannot expect to make $\nu$ diverge by increasing the number of orbits of $J$. If the action on $J$ is trivial, the second term is infinite and the range of \ref{conf} is the classical cartesianity range from embedding calculus (see \cite{GK}).
Before proving Corollary \ref{conf} we see an example where the range is sharp and it is determined by the second term of the minimum. 

\begin{ex}\label{exsharp}
Let us consider the group $G=\mathbb{Z}/2$ and the finite $G$-set $J=\mathbb{Z}/2$ with action by left multiplication. The space $\conf(\mathbb{Z}/2_+,M)$ of configurations in a manifold with involution $M$ is the space of triples of pairwise distinct elements $(x_+,x_0,x_1)$ in $M$. Such a triple is sent by the non-trivial element of $\mathbb{Z}/2$ to $(\tau x_{+},\tau x_{1},\tau x_{0})$, where $\tau$ denotes the involution of $M$. The $\mathbb{Z}/2$-fixed points of this space is described by a natural homeomorphism
\[\conf(\mathbb{Z}/2_+,M)^{\mathbb{Z}/2}\cong M^{\mathbb{Z}/2}\times (M\backslash M^{\mathbb{Z}/2})\]
The equivariant cube $\conf(\mathbb{Z}/2_+\backslash(-),M)$ is the $\mathbb{Z}/2_+$-cube
\[\xymatrix@=8pt{\conf(\mathbb{Z}/2_+,M)\ar[rr]\ar[dd]\ar[dr]
&&\conf(0_+,M)\ar@{<-->}[dl]_-{\tau}\ar[dr]\ar@{|}[d]\\
&\conf(1_+,M)\ar[dd]\ar[rr]&\ar[d]&M\ar[dd]\\
\conf(\mathbb{Z}/2,M)\ar@{-}[r]\ar[dr]&\ar[r]&M\ar@{<-->}[dl]_-{\tau}\ar[dr]\\
& M\ar[rr]&&\ast
}\]
where $0$ and $1$ are the elements of $J=\mathbb{Z}/2$. The dashed maps denote the $G$-structure on the non-fixed objects of $\mathcal{P}(\mathbb{Z}/2_+)$. By inspection, a point in the homotopy limit of this cube with the initial vertex removed is the data of three paths $\gamma_+,\gamma_0,\gamma_1\colon I\to M$, subject to the conditions $\gamma_+(0)\neq \gamma_0(0)$, $\gamma_0(1)\neq \gamma_1(1)$ and $\gamma_1(0)\neq \gamma_+(1)$. The non-trivial element of $\mathbb{Z}/2$ sends $(\gamma_+,\gamma_0,\gamma_1)$ to $(\tau\overline{\gamma}_+,\tau\gamma_1,\tau\gamma_0)$, where the bar denotes the backwards path.
Such a triple of paths is fixed by the action precisely when $\gamma_+$ is a loop in the fixed points manifold $M^{\mathbb{Z}/2}$, and $\gamma_1=\overline{\gamma}_0$.
The fixed points of this homotopy limit is therefore homeomorphic to the space
\[\big(\holim_{\mathcal{P}_0(\mathbb{Z}/2_+)}\conf(\mathbb{Z}/2_+\backslash(-),M)
\big)^{\mathbb{Z}/2}\cong \Big\{\big(\sigma\in\Lambda M^{\mathbb{Z}/2},\gamma\colon I\to M\big) \ |\ \gamma(0)\notin M^{\mathbb{Z}/2}, \gamma(1)\neq\sigma(\ast)\Big\}\]
Here $\Lambda M$ is the space of free loops in $M$.
The canonical map from $\conf(\mathbb{Z}/2_+,M)^{\mathbb{Z}/2}$ to the fixed points of the homotopy limit sends a pair $(x,y)\in M^{\mathbb{Z}/2}\times (M\backslash M^{\mathbb{Z}/2})$ to the corresponding constant paths. By Corollary \ref{conf} it is at least $\nu(\mathbb{Z}/2)$-connected.

Let us choose $M$ to be the torus $S^1\times S^1$ with the involution that swaps the two product factors. The fixed points of the action is a copy of $S^1$ embedded diagonally, and the range of Corollary \ref{conf} is
\[\nu(\mathbb{Z}/2)=\min\left\{1\cdot 2-2\cdot 1+1
\ , \ \min\{1,2\}-2\right\}=\min\big\{
1\ , -1\big\}=-1\]
Let us show that the cube is not $0$-cartesian. The fixed points of the configuration space is
\[\conf(\mathbb{Z}/2_+,S^1\times S^1)^{\mathbb{Z}/2}\cong S^1\times \big((S^1\times S^1)\backslash \Delta\big)\simeq S^1\times (S^1\vee S^1)\]
which is $0$-connected. We need to show that the fixed points space of the homotopy limit has non-trivial $\pi_0$. There is projection map \[\big(\holim_{\mathcal{P}_0(\mathbb{Z}/2_+)}\conf(\mathbb{Z}/2_+\backslash(-),M)
\big)^{\mathbb{Z}/2}\longrightarrow \Lambda M\]
which is split by any choice of path in $M\backslash M^{\mathbb{Z}/2}$. Hence $\pi_0$ of the fixed points of the homotopy limit contains a copy of $\pi_0\Lambda M$, which is non-trivial as $M=S^1\times S^1$ has non-trivial $\pi_1$. 
\end{ex}

\begin{proof}[Proof of \ref{conf}]
Let $\underline{x}\colon J_+\to M$ be a configuration. Its restrictions $\underline{x}_{J_+\backslash U}\in \conf(J_+\backslash U,M)$ for the subsets $U$ of $J_+$ define a basepoint of the diagram $\conf(J_+\backslash (-),M)$. Let $\mathcal{F}_{\underline{x}}(M)\colon \mathcal{P}(J)\to \Top$ be the $J$-cube defined by the homotopy fibers of the maps that forget the basepoint
\[\mathcal{F}_{\underline{x}}(M)_U=\hof\big(\conf(J_+\backslash U,M)\longrightarrow \conf(J\backslash U,M)\big)\]
over the points $\underline{x}_{J\backslash U}$, with the induced canonical maps. The cube $\mathcal{F}_{\underline{x}}(M)$ has a $G_{\underline{x}}$-structure, where $G_{\underline{x}}$ is the stabilizer group of the configuration $\underline{x}$, defined by the canonical maps
\[\xymatrix{
\mathcal{F}_{\underline{x}}(M)_U\ar@{-->}[d]\ar[r]&\conf(J_+\backslash U,M)
\ar[r]\ar[d]^{g}&
\conf(J\backslash U,M)\ni\underline{x}_{J\backslash U} \ar[d]^{g}\ar@{|->}@<8ex>[d]\\
\mathcal{F}_{\underline{x}}(M)_{gU}\ar[r]&\conf(J_+\backslash gU,M)
\ar[r]&
\conf(J\backslash gU,M)\ni\underline{x}_{J\backslash gU}
}\]
The homotopy fiber of the map $\conf(J_+,M)\to \holim_{\mathcal{P}_0(J_+)}\conf(J_+\backslash (-),M)$ over a configuration $\underline{x}$ is $G_{\underline{x}}$-equivalent to the homotopy fiber of the map $\mathcal{F}_{\underline{x}}(M)_\emptyset\to \holim_{\mathcal{P}_0(J)}\mathcal{F}_{\underline{x}}(M)$ over the canonical basepoint defined by $\underline{x}$. Hence $\nu(H)$ is the minimum of the cartesianities of $\mathcal{F}_{\underline{x}}(M)$, for $\underline{x}$ running through the configurations in $\conf(J_+\backslash U,M)$ which are fixed by $H$.
The restriction map $\conf(J_+\backslash U,M)\to \conf(J\backslash U,M)$ is a fibration of $G_U$-spaces, whose fiber over $\underline{x}_{J\backslash U}\colon J\backslash U\to M$ is the manifold $M$ with the image of $\underline{x}_{J\backslash U}$ removed. Hence the cube $\mathcal{F}_{\underline{x}}(M)$ is equivalent as a $G_{\underline{x}}$-diagram to the $J$-cube
\[F_{\underline{x}}(M)_U=M\backslash \bigcup_{j\in J\backslash U}x_j\]
The collection of $0$-dimensional submanifolds $\{x_j\}_{j\in J}$ satisfies the transversality conditions of Theorem \ref{intsubman}, and the cube $F_{\underline{x}}(M)=M\backslash \underline{x}_{\bullet}$ is then
$\nu(H)$-cartesian.

\end{proof}

\section{The equivariant Quillen Theorem $B$}

Quillen's Theorem $B$ of \cite{Quillen} shows that under certain conditions the homotopy fiber of the geometric realization of a functor $F\colon C\to D$ over an object $d$ in $D$ is weakly equivalent to the geometric realization of the over category $F/d$. It is not immediately clear how this result can be generalized to an equivariant context. The analogous statement for an equivariant functor between categories with $G$-actions can easily be reduced to Theorem $B$ by taking fixed points.
The generalization presented in this paper gives a categorical model for the total homotopy fiber of the geometric realization of a $J$-cube of categories. When $J$ is the set with one element, one recovers Quillen's Theorem $B$. 

The key to our equivariant Theorem $B$ is a categorical model for the $G$-homotopy limit of the nerve of a suitable $G$-diagram of categories. This result is not limited to diagrams of cubical shape, but it applies more generally to categories which satisfy a suitable finiteness condition (see \S\ref{secBI}). Even when $G$ is the trivial group our categorical model for the homotopy limit generalizes Theorem $B$ considerably, from homotopy fibers to all finite homotopy limits. Since this result might be of interest to non-equivariant homotopy theorists we prove it as a separate statement.

The section is organized as follows. In \S\ref{secmodel} we define a categorical model for the homotopy limit of the nerve of a diagram $X\colon I\to Cat$, as well as a quasi-fibrancy condition analogous to the hypothesis of Quillen's Theorem $B$. In \S\ref{secBI} we prove that the classifying space of the category constructed in \S\ref{secmodel} models indeed the homotopy limit of $BX$, if $X$ is quasi-fibrant. We call this result ``Theorem $B^I$''. In Corollary \ref{higherQ} we use this result to give a categorical model for the total homotopy fiber of a cube of categories. We think of this Corollary as a Theorem $B$ for cubes. Section \ref{secBGI} contains the generalization of Theorem $B^I$ to $G$-diagrams, and the equivariant Quillen Theorem $B$.

\subsection{Reedy quasi-fibrant diagrams of categories}\label{secmodel}

Let $I$ be a small category and let  $K,X\colon I\to Cat$ be diagrams of small categories. The natural transformations from $K$ to $X$ form a category $\Hom(K,X)$. An object of this category is a natural transformation $\Phi\colon K\to X$, and a morphism $\Lambda\colon \Phi\to \Phi'$ is a pair of natural transformations $\Lambda_1\colon K\to K$ and $\Lambda_2\colon X\to X$ such that the square 
\[\xymatrix@=15pt{K\ar[r]^-{\Lambda_1}\ar[d]_-{\Phi}&K\ar[d]^-{\Phi'}
\\ X\ar[r]_-{\Lambda_2}&X
}\]
commutes.

\begin{rem} The category $\Hom(K,X)$ was introduced in \cite{Lydakis} where the author shows, among other homotopical properties of this construction, that its nerve is isomorphic to the simplicial mapping space of natural transformations $\Hom(NK,NX)$. In particular when $K$ is the functor $K=I/_{(-)}\colon I\to Cat$ the nerve of $\Hom\big(I/_{(-)},X\big)$ is isomorphic to the Bousfield-Kan formula (\cite{BK}) for the homotopy limit of $NX$. This construction computes the homotopy limit of $NX$ in the standard model structure of simplicial sets only when $NX$ is pointwise fibrant, that is only in the rare situation when the vertices $X_i$ are groupoids. The goal of Theorem $B^I$ is to find a condition on $X\colon I\to Cat$, weaker than assuming that $X$ is valued in groupoids, for which the nerve of $\Hom\big(I/_{(-)},X\big)$ is equivalent to the homotopy limit of $NX$.
\end{rem}

\begin{ex}\label{modelhpb}
Let $I$ be the poset $\bullet\to\bullet\leftarrow\bullet$. A diagram indexed over this poset is a pullback diagram of categories $C\stackrel{f}{\to} D\stackrel{g}{\leftarrow} E$. There is an isomorphism of categories  
\[\Hom\Big((\bullet\to\bullet\leftarrow\bullet)/_{(-)}\ ,\ C\stackrel{f}{\to} D\stackrel{g}{\leftarrow} E\Big)\cong f\!\downarrow\! g\]
where $f\!\!\downarrow\!\!g$ is the model for the homotopy pullback of Barwick and Kan \cite{ClarkKan}. The objects of $f\!\!\downarrow\!\!g$ are triples $(c,d,\gamma)$ consisting of objects $c\in C$ and $e\in E$, and a zig-zag of morphisms $\gamma=(f(c)\to d \leftarrow g(e))$ in the category $D$. This can also be described as the Grothendieck construction of the functor $f/_{(-)}\times g/_{(-)}\colon D\longrightarrow Cat$.
\end{ex}

Let $i\!<\!I$ be the full subcategory of the under category $i/I$ of non-identity maps with source $i$. Given a diagram $X\colon I\to Cat$ we define $X_{i<}: (i\unders I)\to Cat$ to be the restriction of $X$ along the projection functor $i\unders I\to I$ that sends $i\to j$ to $j$. For every object $i$ of $I$, there is a functor 
\[m_i\colon X_i\longrightarrow \Hom\big((i\unders I)/_{(-)},X_{i<}\big)\]
that sends an object $x$ of $X_i$ to the natural transformation $m_i(x)\colon (i\unders I)/_{(-)}\to X_{i<}$ consisting of the constant functors $m_i(x)_\alpha\colon (i\unders I)/_{\alpha}\to X_{j}$ that send every object to $\alpha_\ast x$. For the purpose of this paper, we say that a functor of small categories is a weak equivalence if its nerve is a weak equivalence of simplicial sets.
\begin{defn}\label{quasiReedy}
A diagram $X\colon I\to Cat$ is Reedy quasi-fibrant if for every object $i$ of $I$ the functor
\[m_i/_{(-)}\colon \Hom\big((i\unders I)/_{(-)},X_{i<}\big)\longrightarrow Cat\]
sends every morphism in the category of natural transformations to a weak equivalence of categories.
\end{defn}

This condition is reminiscent of the Reedy fibrancy conditions of diagrams indexed over a Reedy category. We explain the relationship between the two conditions more closely. The functor $m_i$ factors through the categorical limit
\[m_i\colon X_i\longrightarrow\lim\limits_{i\stackrel{\neq\id}{\to}j}X_j=\Hom(\ast,X_{i<})\longrightarrow \Hom\big((i\unders I)/_{(-)},X_{i<}\big)\]
where the second map is induced by the projection $(i\unders I)/_{(-)}\to\ast$. The first functor is the $i$-matching functor of $X$. The nerves of the $m_i$'s are thickenings of the matching maps of $NX$. The diagram $NX$ would be Reedy fibrant if the matching maps were Kan fibrations. If $X$ is Reedy quasi-fibrant then, by Quillen's Lemma \cite[p.98]{Quillen} and Thomason's Theorem \cite{Thomason}, the replacement of $m_i$ by the Grothendieck construction
\[NX_i\simeq N\big(\Hom\mathlarger{\mathlarger{\wr}} m_i/_{(-)}\big)\longrightarrow N\Hom\big((i\unders I)/_{(-)},X_{i<}\big)\]
is a quasi-fibration. In this sense the condition of Theorem $B^I$ is a Reedy quasi-fibrancy condition, hence the terminology.

\begin{ex}\label{Reedyhpb}
If we return to the diagram of categories $C\stackrel{f}{\to} D\stackrel{g}{\leftarrow} E$ of Example \ref{modelhpb} we see that this is Reedy quasi-fibrant precisely when the functors $f/_{(-)},g/_{(-)}\colon D\to Cat$ send every morphism to a weak equivalence.
\end{ex}

\subsection{Finite homotopy limits of categories and higher Quillen's Theorem $B$}\label{secBI}

Let $I$ be a small category, and suppose that the nerves of the under categories $N(i/I)$ are finite dimensional simplicial sets, for every object $i$ of $I$. We call a category $I$ with this property left-finite. Such a category has a canonical degree function $Ob I\to\mathbb{N}$ that sends $i$ to the dimension of $N(i/I)$. This is the length of the longest sequence of non-identity morphisms starting at $i$. 
The degree function induces a filtration 
\[I_{\leq 0}\subset I_{\leq 1}\subset \dots \subset I\]
where $I_{\leq n}$ is the full subcategory of $I$ of objects of degree less than or equal to $n$. This filtration is finite precisely when $NI$ is itself finite dimensional.

Let  $X\colon I\to Cat$ be a diagram of categories. We choose the space
\[\holim_INX:=\Hom\big(N(I/_{(-)}),FNX\big)\]
as a model for the homotopy limit of $NX$, where $NX\stackrel{\simeq}{\to} FNX$ is a pointwise fibrant replacement of the diagram $NX$. The fibrant replacement induces a comparison map $N\Hom\big(I/_{(-)},X\big)\to \holim_INX$.

\begin{theoremBI}
Let $I$ be a left-finite category, and let $X\colon I\to Cat$ be a Reedy quasi-fibrant diagram of categories (see Definition \ref{quasiReedy}). There is a weak equivalence
\[\holim_{n\in \mathbb{N}^{op}}N\Hom\big((I_{\leq n})/_{(-)},X_{\leq n}\big)\stackrel{\simeq}{\longrightarrow}\holim_I NX\]
where $X_{\leq n}$ is the restriction of $X$ to $I_{\leq n}$.
In particular if the nerve of $I$ is finite dimensional the map
\[N\Hom\big(I/_{(-)},X\big)\stackrel{\simeq}{\longrightarrow}\holim_I NX\]
is a weak equivalence of simplicial sets.
\end{theoremBI}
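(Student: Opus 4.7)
The strategy is induction on the filtration $I_{\leq 0}\subset I_{\leq 1}\subset\cdots$ given by the degree function $\deg(i)=\dim N(i/I)$, reducing the comparison at each stage to Quillen's Theorem $B$ applied to the matching-type functors $m_i\colon X_i\to\Hom\big((i\unders I)/_{(-)},X_{i<}\big)$, whose hypothesis is exactly the Reedy quasi-fibrancy condition of Definition \ref{quasiReedy}. The combinatorial fact that makes the induction work is that if $\deg(i)=n+1$ and $\alpha\colon i\to j$ is a non-identity morphism, then $\deg(j)\leq n$; in particular $i\unders I=i\unders I_{\leq n}$ and extending a natural transformation $\Psi\colon I_{\leq n}/_{(-)}\to X_{\leq n}$ across the objects of degree $n+1$ amounts, via the universal-element trick $x_i:=\Phi_i(\id_i)\in X_i$, to a choice for each such $i$ of an object of the slice $m_i/_{\Psi|_{i\unders I}}$.

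Concretely, I first exhibit a cartesian square of categories
\[\xymatrix@R=15pt@C=18pt{
\Hom\big(I_{\leq n+1}/_{(-)},X_{\leq n+1}\big) \ar[r] \ar[d] & \displaystyle\prod_{\deg i=n+1} X_i \ar[d]^-{\prod m_i}\\
\Hom\big(I_{\leq n}/_{(-)},X_{\leq n}\big) \ar[r] & \displaystyle\prod_{\deg i=n+1} \Hom\big((i\unders I)/_{(-)},X_{i<}\big)
}\]
whose left vertical restricts along $I_{\leq n}\hookrightarrow I_{\leq n+1}$ and whose bottom sends $\Psi$ to its restrictions $(\Psi|_{i\unders I})_i$. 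Reedy quasi-fibrancy then allows Quillen's Theorem $B$ to be applied to each $m_i$: by Quillen's Lemma and Thomason's Theorem, the Grothendieck replacement $\Hom\big((i\unders I)/_{(-)},X_{i<}\big)\wr (m_i/_{(-)})\to\Hom\big((i\unders I)/_{(-)},X_{i<}\big)$ is equivalent to $m_i$ and has nerve a quasi-fibration. Pulling back along the bottom arrow converts the categorical pullback into a homotopy pullback on nerves.

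The base case $n=0$ is immediate since $I_{\leq 0}$ is discrete and both sides of the comparison reduce to $\prod_{\deg i=0}NX_i$. For the inductive step, I combine the homotopy pullback above with the analogous Reedy-type decomposition
\[\holim_{I_{\leq n+1}}NX_{\leq n+1}\simeq \holim_{I_{\leq n}}NX_{\leq n}\times^h_{\prod_i\holim_{i\unders I}NX_{i<}}\prod_i NX_i\]
of the Bousfield--Kan formula along the attachment $I_{\leq n}\hookrightarrow I_{\leq n+1}$. Applying the inductive hypothesis to $I_{\leq n}$ and, for each degree-$(n+1)$ object $i$, to the category $i\unders I$ (whose degrees are bounded by $n$) identifies three of the four corners of the two squares; the induced comparison at the fourth corner is then a weak equivalence at level $n+1$. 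Since $N(I/_i)=\colim_n N(I_{\leq n}/_i)$, the space $\holim_I NX$ is the homotopy inverse limit of the tower of $\holim_{I_{\leq n}}NX_{\leq n}$, proving the first statement; when $NI$ is finite-dimensional the tower stabilizes, yielding the second.

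The main obstacle is verifying that the categorical pullback above becomes a homotopy pullback on nerves. This rests on the Thomason--Quillen quasi-fibration replacement of the right vertical being stable under pullback along the bottom arrow, which in turn depends on Reedy quasi-fibrancy holding as a strict condition on morphisms rather than merely up to homotopy. A secondary technical point is matching the resulting homotopy pullback with the Bousfield--Kan decomposition of $\holim_{I_{\leq n+1}}$ along $I_{\leq n}\hookrightarrow I_{\leq n+1}$; this should follow from standard manipulations of over-categories together with the fact that $I_{\leq n+1}/_i$ is the cone on $i\unders I$ whenever $\deg(i)=n+1$.
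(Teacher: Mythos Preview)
Your overall strategy matches the paper's: induct on the degree filtration, and at each stage use the Reedy quasi-fibrancy hypothesis to invoke Quillen's Theorem $B$ on the matching functors $m_i$. You even state the correct inductive fact in your first paragraph: extending $\Psi$ across a degree-$(n{+}1)$ object $i$ amounts to choosing an object of the \emph{slice} $m_i/_{\Psi|_{i\unders I}}$.

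The concrete error is that the square you display is \emph{not} cartesian, and this contradicts what you just said. The strict fibre of $\prod_i m_i$ over $(\Psi|_{i\unders I})_i$ is $\prod_i m_i^{-1}(\Psi|_{i\unders I})$, which forces each $\Psi|_{i\unders I}$ to be the constant transformation $\alpha\mapsto\alpha_\ast x_i$ --- a condition almost never satisfied. By contrast, the fibre of your left vertical over $\Psi$ is genuinely $\prod_i m_i/_{\Psi|_{i\unders I}}$: when you extend $\Phi_j$ from $(I_{\leq n})/_j$ to $(I_{\leq n+1})/_j$, you must also specify $\Phi_j$ on the new morphisms $(k\to j)\to(l\to j)$ with $\deg k=n{+}1$, $\deg l\leq n$, and this extra datum is exactly a morphism $m_k(x_k)\to\Psi|_{k\unders I}$. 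So the top-left corner is not a pullback but a Grothendieck construction
\[
\Hom\big(I_{\leq n+1}/_{(-)},X_{\leq n+1}\big)\ \cong\ \Hom\big(I_{\leq n}/_{(-)},X_{\leq n}\big)\ \mathlarger{\mathlarger{\wr}}\ \Big(\Psi\mapsto \prod_{\deg i=n+1} m_i/_{\Psi|_{i\unders I}}\Big),
\]
which is precisely the content of the paper's Lemma~\ref{indgrot}. Once you have this, Quillen's Lemma applied directly to the Grothendieck projection (not to a pulled-back replacement of $\prod m_i$) gives the fibre sequence on nerves, and the comparison with the $\holim$ fibre sequence proceeds as you outline.

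A secondary point: your decomposition $\holim_{I_{\leq n+1}}NX\simeq\holim_{I_{\leq n}}NX\times^h_{\prod_i\holim_{i\unders I}NX_{i<}}\prod_i NX_i$ is more delicate than it looks, because for $\deg j\leq n$ the over-category $(I_{\leq n+1})/_j$ is strictly larger than $(I_{\leq n})/_j$, so restriction also shrinks the domains at the old vertices. The paper avoids this bookkeeping by running the induction over the auxiliary categories $U\!\leq\! I$ and $U\unders I$ (for $U\subset I_n$) rather than over $I_{\leq n}$ directly; for those categories the fibre of the restriction on homotopy limits is transparently $\prod_{u}\hof(NX_u\to\holim NX_{u<})$, and the inductive hypothesis applies since $V\unders I=U\!\leq\! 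I$ for a subset $U$ of one degree lower.
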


\begin{rem}\label{reduceB}
Let us see how Theorem $B^I$ relates to Quillen's Theorem $B$. Let us consider a diagram of categories $C\stackrel{f}{\to} D\stackrel{g}{\leftarrow} E$ on the poset $I=(\bullet\to\bullet\leftarrow\bullet)$. Theorem $B^I$ tells us that the nerve of the category
\[\Hom\Big((\bullet\to\bullet\leftarrow\bullet)/_{(-)}\ ,\ C\stackrel{f}{\to} D\stackrel{g}{\leftarrow} E\Big)\cong f\!\downarrow\! g\]
is equivalent to the homotopy pullback of $Nf$ and $Ng$, if both functors $f/_{(-)},g/_{(-)}\colon D\to Cat$ send morphisms to equivalences (see \ref{modelhpb} for the definition of $f\!\downarrow\! g$). Barwick and Kan arrive to the same conclusion by assuming that just one among the functors $f/_{(-)}$ and $g/_{(-)}$ satisfies this property (see \cite{ClarkKan}). In particular when $E=\ast$ is the trivial category, their result is equivalent to the original formulation of Quillen's Theorem $B$, from \cite{Quillen}.
The fact that one is able to weaken the quasi-fibrancy condition in the pullback case to only one of the functors $f/_{(-)}$ and $g/_{(-)}$ is a special feature of squares. It is completely analogous to the fact that the pullback along a fibration is homotopy invariant, even though the pullback diagram itself is injectively fibrant only when both maps are fibrations (see e.g. \cite[13.3.9]{hirsch}).
\end{rem}

The proof of Theorem $B^I$ is by induction on the filtration $I_{\leq 0}\subset I_{\leq 1}\subset \dots \subset I$, by exploiting the fact that the complements $I_n=I_{\leq n}\backslash I_{\leq n-1}$ are discrete categories. The inductive step is based on a Lemma that describes the interaction between natural transformations and Grothendieck constructions, which requires us to set up some notation.
For any set of degree $n$ objects $U\subset I_n$, let $U\!\leq\! I$ be the union of the under categories $u/I$ for $u\in U$. Explicitly, its set of objects is
\[Ob\ (U\!\leq\! I)=\{(u\in U,\alpha \colon u\to i)\}\]
The set of morphisms $(u,\alpha)\to (v,\beta)$ is empty if $u$ and $v$ are different, and it is the set of morphisms $(u,\alpha)\to (u,\beta)$ in $u/I$ otherwise.
Define $U\unders I$ to be the full subcategory of $U\!\leq\! I$ whose objects are non-identity maps.
Given a diagram of categories $X\colon I\to Cat$, we denote the corresponding restrictions by
\[X_{U\leq}\colon U\!\leq\! I\to I\stackrel{X}{\to} Cat \ \ \ \ \ \ \ \ \ \ \ X_{U<}\colon U\unders I\to I\stackrel{X}{\to} Cat\] where $U\!\leq\! I\to I$ and  $U\unders I\to I$ project onto the target. We recall from \cite{Thomason} that the Grothendieck construction of a functor $F\colon C\to Cat$ is the category $C\wr F$ with objects pairs $(c\in C,x\in \Ob F(c))$, and where a morphism $(c,x)\to (d,y)$ is a morphism $\alpha\colon c\to d$ in $C$ together with a morphism $\delta\colon \alpha_\ast x\to y$ in the category $F(d)$.

\begin{lemma}\label{indgrot}
Let $X\colon I\to Cat$ be a diagram of categories, and suppose that $I$ is left-finite. For every subset $U\subset I_n$, there is a natural isomorphism of categories
\[\Hom\big((U\!\leq\! I)/_{(-)},X_{U\leq}\big)\cong \Big(\Hom\big((U\unders I)/_{(-)},X_{U<}\big)\mathlarger{\mathlarger{\wr}}F_U\Big)\]
where $F_U\colon \Hom\big((U\unders I)/_{(-)},X_{U<}\big)\to Cat$ is the functor that sends a natural transformation $\Phi$ to the category
\[F_U(\Phi)=\prod_{u\in U}(m_u)/_{(\Phi|_{u<I})}\]
\end{lemma}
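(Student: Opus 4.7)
The plan is to establish the isomorphism by constructing a direct functor with explicit inverse, after reducing to a simpler case. Since morphisms in $U\!\leq\! I$ between pairs $(u,\alpha)$ and $(v,\beta)$ with $u \neq v$ are empty, the category $U\!\leq\! I$ decomposes as a disjoint union $\coprod_{u\in U} (u/I)$, and analogously $U\unders I = \coprod_{u\in U}(u\unders I)$. Natural transformations out of a coproduct category split as a product of natural transformations, and the Grothendieck construction of a product diagram is the product of Grothendieck constructions (both on objects and on morphisms, by inspection). Hence both sides of the asserted isomorphism split as products over $u \in U$, reducing the problem to the case $U = \{u\}$ for a single object $u$ of $I_n$.

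In the singleton case, the functor $\Theta$ from left to right is defined as follows. Given $\Psi\colon (u/I)/_{(-)} \to X_{u\leq}$, its base component is the natural transformation $\Phi\colon (u\unders I)/_{(-)} \to X_{u<}$ obtained by restricting $\Psi$ along $u\unders I \hookrightarrow u/I$, using the inclusion $(u\unders I)/_{(u,\alpha)} \hookrightarrow (u/I)/_{(u,\alpha)}$ on each slice. The fibre component $(x_u,\eta_u) \in m_u/_{\Phi}$ is extracted from the extra data at the identity object: set $x_u := \Psi_{(u,\id_u)}(u,\id_u) \in X_u$, and construct $\eta_u\colon m_u(x_u) \to \Phi$ in $\Hom((u\unders I)/_{(-)},X_{u<})$ from the values of $\Psi_{(u,\alpha)}$ at the object $(u,\id_u) \in (u/I)/_{(u,\alpha)}$ together with the morphisms out of $(u,\id_u)$ in each slice. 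Naturality of $\Psi$ at the canonical morphism $\alpha\colon (u,\id_u) \to (u,\alpha)$ in $u/I$ forces $\Psi_{(u,\alpha)}(u,\id_u) = \alpha_\ast x_u$, which matches the value of the constant natural transformation $m_u(x_u)$ at $\alpha$, so that $\eta_u$ is well-defined.

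The inverse functor reassembles $\Psi$ from $(\Phi,(x_u,\eta_u))$: on the subcategory $(u\unders I)/_{(u,\alpha)} \subset (u/I)/_{(u,\alpha)}$, each $\Psi_{(u,\alpha)}$ agrees with $\Phi_{(u,\alpha)}$; the value at the additional object $(u,\id_u)$ and the morphisms emanating from it are dictated by $\eta_u$; and $\Psi_{(u,\id_u)}$ is reconstructed from the same data, since every object of $(u/I)/_{(u,\id_u)}$ is obtained from $(u,\id_u)$ by a morphism in $u\unders I$ equipped with a retraction. On morphisms, a pair $(\Lambda_1,\Lambda_2)$ constituting $\Psi \to \Psi'$ in $\Hom((u/I)/_{(-)},X_{u\leq})$ restricts to a morphism $\mu = (\mu_1,\mu_2)\colon \Phi \to \Phi'$ in $\Hom((u\unders I)/_{(-)},X_{u<})$, and the residual components at the identity object provide exactly the fibre morphism $\mu_\ast(x_u,\eta_u) \to (x_u',\eta_u')$ in $m_u/_{\Phi'}$ required for a morphism in the Grothendieck construction. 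I expect the main obstacle to be careful bookkeeping of how the components of $\Lambda_1$ and $\Lambda_2$ at $(u,\id_u)$ are repackaged into the fibre morphism; once the matching of morphism data is pinned down, checking that $\Theta$ and its inverse are mutually inverse and preserve composition is a routine verification.
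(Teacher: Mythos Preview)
Your approach is essentially the same as the paper's: decompose an object $\Psi$ of the left-hand side into its restriction $\Phi$ along $u\unders I\hookrightarrow u/I$, the value $x_u=\Psi_{\id_u}(\id_u)$, and the morphisms $\gamma_\alpha$ coming from $\Psi_\alpha$ applied to the canonical map $(\id_u)\to(\alpha)$ in $(u/I)/_\alpha$; then observe this data is exactly an object of the Grothendieck construction. The paper carries this out for all $u\in U$ simultaneously while you reduce first to $U=\{u\}$ via the coproduct splitting, which is a valid and harmless reorganization.

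One small correction: your remark that ``every object of $(u/I)/_{(u,\id_u)}$ is obtained from $(u,\id_u)$ by a morphism in $u\unders I$ equipped with a retraction'' overcomplicates matters. Left-finiteness forces $(u/I)/_{(u,\id_u)}$ to be the terminal category $\{\id_u\}$: any factorization $u\to k\to u$ of $\id_u$ must be trivial, since a non-identity $u\to k$ would give $\deg k<\deg u$ and then the non-identity $k\to u$ would give $\deg u<\deg k$. So reconstructing $\Psi_{(u,\id_u)}$ amounts only to specifying $x_u$.
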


\begin{proof}
An object in the Grothendieck construction is a collection of functors $\{\Phi_{\alpha}\colon (U\unders I)/_{\alpha}\to X_{i}\}_{\alpha}$ natural in the maps $\alpha\colon u\to i$ ranging over the objects of $U\unders i$, together with  objects $x_u\in X_u$ for every $u\in U$, and compatible natural transformations for every $\alpha\colon u\to i$
\[\gamma_{\alpha}\colon \alpha_\ast x_u\longrightarrow \Phi_\alpha\]
 Here $\alpha_{\ast}x_u\colon (U\unders I)/_{\alpha}\to X_{i}$ is the constant functor with value $\alpha_{\ast}x_u$. Given such an object $(\Phi,\underline{x},\underline{\gamma})$, define a natural transformation $\Psi\colon (U\!\leq\! I)/_{(-)}\to X_{U\leq}$ as follows.

An object of $(U\!\leq\! I)/_{\alpha}$ is a factorization $\vcenter{\hbox{\xymatrix@=5pt{u\ar[dr]\ar[rr]^{\alpha}&&i\\
&k\ar[ur]}}}$, and an object of $(U\unders I)/_{\alpha}$ is a similar factorization where the map $u\to k$ is not an identity. The functor $\Psi_\alpha\colon (U\!\leq\! I)/_{\alpha}\to X_{i}$ is defined on objects by
\[\Psi_\alpha(\vcenter{\hbox{\xymatrix@=5pt{u\ar[dr]
\ar[rr]^{\alpha}&&i\\
&k\ar[ur]}}})=\left\{\begin{array}{ll}\alpha_{\ast}x_u&\ , \mbox{if } (u\to k)=\id_u\\
\Phi_\alpha(\vcenter{\hbox{\xymatrix@=5pt{u\ar[dr]
\ar[rr]^{\alpha}&&i\\
&k\ar[ur]}}})&\ , \mbox{if } (u\to k)\neq \id_u
\end{array}\right.\]
The point here is that $\Phi_\alpha(\vcenter{\hbox{\xymatrix@=5pt{u\ar[dr]
\ar[rr]^{\alpha}&&i\\
&k\ar[ur]}}})$ is defined precisely when $u\to k$ is not an identity.
\vspace{-.4cm}
A morphism $\vcenter{\hbox{\xymatrix@=5pt{u\ar[dr]
\ar[rr]^{\alpha}&&i\\
&k\ar[ur]}}}\to \vcenter{\hbox{\xymatrix@=5pt{u\ar[dr]
\ar[rr]^{\alpha}&&i\\
&l\ar[ur]}}}$ in $(U\!\leq\! I)/_\alpha$ is a map $k\to l$ such that the two relevant triangles commute. Such a morphism is sent to
\[\Psi_\alpha\Big(\!\!\vcenter{\hbox{
\xymatrix@C=5pt@R=8pt{u\ar[ddr]\ar[dr]
\ar[rr]^{\alpha}&&i\\
&k\ar[d]\ar[ur]\\
&l\ar[uur]}}}\Big)=\left\{\begin{array}{ll}
\id_{\alpha_{\ast}x_u}&, \mbox{if } (u\to l)=\id_u
\\
\gamma_{\alpha}\colon \alpha_\ast x_u\rightarrow \Phi_\alpha(\vcenter{\hbox{\xymatrix@=5pt{u\ar[dr]
\ar[rr]^{\alpha}&&i\\
&l\ar[ur]}}})&, \mbox{if } (u\to l)\neq \id_u  ,\ (u\to k)= \id_u
\\
\Phi_\alpha\big(\vcenter{\hbox{\xymatrix@=8pt{k\ar[d]\\ l}}}\big)\colon \Phi_\alpha\big(\vcenter{\hbox{\xymatrix@=5pt{u\ar[dr]
\ar[rr]^{\alpha}&&i\\
&k\ar[ur]}}}\big)\to\Phi_\alpha\big(\vcenter{\hbox{\xymatrix@=6pt{u\ar[dr]
\ar[rr]^{\alpha}&&i\\
&l\ar[ur]}}}\big)
&, \mbox{if } (u\to l)\neq \id_u ,\ (u\to k)\neq \id_u
\end{array}\right.\]
Notice that if $u\to l$ is the identity map on $u$, by degree reasons both $u\to k$ and $k\to l$ must be identities.
This procedure defines a functor 
\[\Big(\Hom\big((U\unders I)/_{(-)},X_{U<}\big)\mathlarger{\mathlarger{\wr}}F_U\Big)\longrightarrow \Hom\big((U\!\leq\! I)/_{(-)},X_{U\leq}\big)\]
on objects. Extend this on morphisms as follows. Unraveling the definitions of the Grothendieck construction and of the natural transformations category, we see that a morphism $(\Phi,\underline{x},\underline{\gamma})\to (\Phi',\underline{x}',\underline{\gamma}')$ in the left-hand category is a collection of compatible natural transformations $\lambda_{\alpha}\colon \Phi_\alpha\to \Phi_{\alpha}'$, for every non-identity map $\alpha\colon u\to i$ with $u\in U$, together with morphisms $f_u\colon x_u\to x_u'$ in $X_u$ for every $u\in U$, which make the squares
\[\xymatrix{\alpha_\ast x_u\ar[r]^{\alpha_\ast f_u}\ar[d]_{\gamma_{\alpha}}&\alpha_\ast x'_u\ar[d]^{\gamma'_{\alpha}}\\
\Phi_\alpha\ar[r]_{\lambda_{\alpha}}&\Phi'_{\alpha}
}\]
commutative. Such a pair $(\lambda,\underline{f})$ induces a morphism $\Psi\to \Psi'$ between the associated natural transformations in $\Hom\big((U\!\leq\! I)/_{(-)},X_{U\leq}\big)$, defined at a non-identity morphism $\alpha\colon u\to i$ by
\[\Psi_\alpha=\Phi_\alpha\stackrel{\lambda_\alpha}{\longrightarrow}\Phi'_\alpha=\Psi'_\alpha\]
and at an identity map $\id_u$ by $f_u\colon \Psi_{\id_u}=\alpha_\ast x_u\to\alpha_\ast x'_u=\Psi'_{\id_u}$.
The resulting functor is an isomorphism of categories. Its inverse sends a natural transformation $\{\Psi_\alpha\colon (U\!\leq\! I)/_{\alpha}\to X_i\}_{\alpha\colon u\to i}$ to the triple $(\Phi,\underline{x},\underline{\gamma})$ consisting of the restrictions $\Phi_\alpha\colon (U\unders I)/_{\alpha}\to (U\!\leq\! I)/_\alpha\stackrel{\Psi_\alpha}{\to}X_i$ for each $(\alpha\colon u\to i)\in U\unders I$, the objects $x_u=(\Psi_u\colon\ast=(U\!\leq\! I)/_{\id_u}\to X_u)$, and the natural transformations $\gamma_{\alpha}$ defined at an object $\vcenter{\hbox{\xymatrix@=5pt{u\ar[dr]\ar[rr]^{\alpha}&&i\\
&k\ar[ur]}}}$ of $(U\!\leq\! I)/_\alpha$ by the morphism in $X_i$
 \[\alpha_\ast x_u=\alpha_\ast\Psi_{\id_u}(\vcenter{\hbox{\xymatrix@=4pt{u\ar@{=}[dr]\ar@{=}[rr]&&u\\
&u\ar@{=}[ur]}}})
=\Psi_\alpha(\vcenter{\hbox{\xymatrix@=5pt{u\ar@{=}[dr]\ar[rr]^{\alpha}&&i\\
&u\ar[ur]_{\alpha}}}})
\stackrel{}{\longrightarrow}
\Psi_\alpha(\vcenter{\hbox{\xymatrix@=5pt{u\ar[dr]\ar[rr]^{\alpha}&&i\\
&k\ar[ur]}}})=\Phi_\alpha(\vcenter{\hbox{\xymatrix@=5pt{u\ar[dr]\ar[rr]^{\alpha}&&i\\
&k\ar[ur]}}})\]
Here the second equality holds by naturality of $\Psi$, and the arrow is $\Psi_{\alpha}$ applied to the morphism $\vcenter{\hbox{\xymatrix@C=9pt@R=7pt{u\ar[ddr]\ar@{=}[dr]
\ar[rr]_{\alpha}&&i\\
&u\ar[d]\ar[ur]\\
&k\ar[uur]}}}$
of $(U\!\leq\! I)/_{\alpha}$ induced by the factorization $\vcenter{\hbox{\xymatrix@=5pt{u\ar[dr]\ar[rr]^{\alpha}&&i\\
&k\ar[ur]}}}$. The inverse can be extended similarly to morphisms.
\end{proof}

\begin{proof}[Proof of Theorem $B^I$]
Let $NX\to FNX$ be a pointwise fibrant replacement of the diagram $NX$.
We prove just below, by induction on $n$, that for every subset $U\subset I_n$ the map
\begin{equation}\label{induction} N\Hom\big((U\!\leq\! I)/_{(-)},X_{U\leq}\big)\cong \Hom\big(N(U\!\leq\! I)/_{(-)},NX_{U\leq}\big)\stackrel{\simeq}{\longrightarrow}\holim_{U\leq I}NX_{U\leq}
\end{equation}
is a weak equivalence. In particular by choosing $U=I_n$ the category $I_n/I$ is $I_{\leq n}$, and this is an equivalence 
\[N\Hom\big((I_{\leq n})/_{(-)},X_{\leq n}\big)\stackrel{\simeq}{\longrightarrow} \displaystyle\holim_{I_{\leq n}} NX_{\leq n}\]
If $NI$ is finite dimensional, then $I=I_{\leq d}$ for some integer $d$, and the map $N\Hom(I/_{(-)},X)\to \displaystyle\holim_{I} NX$ is an equivalence. When $I$ is infinite, taking the homotopy limit over the maps induced by the filtration gives an equivalence
\[\holim_{n\in \mathbb{N}^{op}}N\Hom\big((I_{\leq n})/_{(-)},X_{\leq n}\big)\stackrel{\simeq}{\longrightarrow}\holim_{n\in \mathbb{N}^{op}}\holim_{I_{\leq n}} NX_{\leq n}\]
The structure maps $\holim_{I_{\leq n}} NX_{\leq n}\to \holim_{I_{\leq n-1}} NX_{\leq n-1}$ in the right-hand tower are Kan fibrations. Indeed, they are induced by mapping the cofibrations of diagrams of simplicial sets $\iota_{n}/_{(-)}\to I_{\leq n}$, where $\iota_n\colon I_{\leq n-1}\to I_{\leq n}$ is the inclusion, into the fibrant diagram $FNX_{\leq n}$. Hence the right-hand homotopy limit is equivalent to the categorical limit. Now each $\Hom\big(N(I_{\leq n})/_{(-)},FNX_{\leq n}\big)$ is isomorphic to $\Hom\big(Nj_n/_{(-)},FNX\big)$, where $j_n\colon I_{\leq n}\to I$ is the inclusion. The right-hand limit is then
\[\lim_{n\in \mathbb{N}^{op}}\Hom\big(N(I_{\leq n})/_{(-)},FNX_{\leq n}\big)\cong \Hom\big(\colim_{n}Nj_n/_{(-)},FNX\big)\cong\holim_INX\]
The last isomorphism holds as the category $j_{n}/_{i}$ includes in $j_{n+1}/_{i}$ for every object $i$ of $I$, with union $\bigcup_{n\in\mathbb{N}}j_{n}/_{i}=I/_i$. This will finish the proof of Theorem $B^I$.

We are left with proving the inductive statement (\ref{induction}) above.
The base induction step $n=0$, relies on the fact that for a subset $U\subset I_0$, the category $(U\!\leq\! I)$ is discrete, with objects the identity maps $\id_u$, for $u\in U$. Therefore the category
 $(U\!\leq\! I)/_{\id_u}=\{\id_u\}$ is the one point category. It follows that $\Hom\big((U\!\leq\! I)/_{(-)},X_{U\leq}\big)$ is the product category
\[\Hom\big((U\!\leq\! I)/_{(-)},X_{U\leq}\big)=\prod_{u\in U}X_u\]
and the homotopy limit of $NX_{U\leq}$ is the product
\[\holim NX_{U\leq}=\prod_{u\in U}FNX_u\]
Since the product of simplicial sets preserve all equivalences (not only between fibrant objects), the map $N\prod_{u\in U}X_u\to \prod_{u\in U}FNX_u$ is an equivalence.

Now suppose that $N\Hom\big((U\!\leq\! I)/_{(-)},X_{U\leq}\big)\to\holim NX_{U\leq}$ is an equivalence for every subset $U\subset I_n$, and let $V$ be a subset of $I_{n+1}$. Let $\Phi\colon (V\unders I)/_{(-)}\to X_{V<}$ be a natural transformation, and consider the commutative diagram
\[\xymatrix{
NF_{V}(\Phi)\ar[rr]\ar[d]&&\prod\limits_{v\in V}\hof_{\Phi|_{v<I}}\big( NX_v\to \holim NX_{v<}\big)\ar[d]\\
N \big(\Hom\mbox{\scalebox{1.3}{$\wr$}} F_V\big)\ar[r]^-{\cong}\ar[dr]&N\Hom\big((V\!\leq\! I)/_{(-)},X_{V\leq}\big)\ar[d]\ar[r]&\holim NX_{V\leq}\ar[d]\\
&N\Hom\big((V\unders I)/_{(-)},X_{V<}\big)\ar[r]^-\simeq&\holim NX_{V<}
}\]
The bottom map is an equivalence by the inductive hypothesis, since $V\unders I=U\!\leq\! I$ for the subset of objects $U:=(V\unders I)\cap I_{n}$ of $I_n$. The isomorphism is from Lemma \ref{indgrot}. By our assumption on the diagram $X$, the functor $F_V$ sends every morphism to a weak equivalence. Thus by Quillen's Lemma \cite[p.98]{Quillen} and Thomason's Theorem \cite{Thomason}, the left-hand vertical sequence is a fiber sequence. Let us turn to the right-hand vertical sequence. Let $\iota \colon V\unders I\to V\!\leq\! I$ be the inclusion. The map induced by $\iota$ on homotopy limits is the restriction
\[\holim_{V\!\leq\! I}NX_{V\leq}\longrightarrow \Hom\big(N\iota/_{(-)},FNX_{V\leq}\big)\cong \holim_{V\unders I} NX_{V<}\]
along the inclusion $\iota/_{(-)}\to (V\!\leq\! I)/_{(-)}$. Since this is a cofibration of diagrams of simplicial sets and $FNX_{V\leq}$ is fibrant, the restriction map is a Kan fibration.
Its point fiber over $\Phi$ is the product of total homotopy fibers
\[\prod_{v\in V}\hof_{\Phi|_{v<I}}\big( NX_v\to \holim NX_{v<}\big)\]
and therefore the right-hand vertical sequence in the diagram above is also a fiber sequence.
For finishing our inductive proof, is then enough to show that the map on homotopy fibers
 \[NF_V(\Phi)=\prod_{v\in V}N\big( X_v\stackrel{m_v}{\to} \Hom\big((v\unders I)/_{(-)},X_{v<}\big)\big)/_{\Phi|_{v\unders I}}\longrightarrow \prod_{v\in V}\hof_{\Phi|_{v<I}}\big( NX_v\to \holim NX_{v<}\big)\]
is an equivalence. The product components of this map factor as
\[\xymatrix{N\big(X_v\stackrel{m_v}{\to}\Hom\big((v\unders I)/_{(-)},X_{v<}\big)\big)/_{\Phi|_{v<I}}\ar[r]\ar[dr]&\hof_{\Phi|_{v<I}}\big( NX_v\to N\Hom\big((v\unders I)/_{(-)},X_{v<}\big)\big)\ar[d]\\
&\hof_{\Phi|_{v<I}}\big( NX_v\to \holim NX_{v<}\big)}\]
Our assumption on $X$ says that the functor $m_v/_{(-)}$ sends every map to a weak equivalence. Hence by Quillen's Theorem $B$ the horizontal map at the top of the triangle is an equivalence. The vertical map is also an equivalence, as by hypothesis of induction for the set $U:=(v\unders I)\cap I_{n}$ the map $N\Hom\big((v\unders I)/_{(-)},X_{v<}\big)\big)\to \holim NX_{v<}$ is an equivalence.
\end{proof}

The following result can be thought of as a generalization of Quillen's Theorem $B$ to higher dimensional cubes, with a higher Quillen Theorem $A$ as a consequence.

\begin{cor}\label{higherQ}
Let $n\geq 1$ be an integer, let $X\colon\mathcal{P}(n)\to Cat$ be a Reedy quasi-fibrant cube of categories, and let $\Phi\colon \mathcal{P}_0(-)\to X_{\emptyset <}$ be a natural transformation. The total homotopy fiber of $NX$ over $N\Phi$ is equivalent to the nerve of the over category $m_{\emptyset}/_{\Phi}$. In particular if the categories $m_{\emptyset}/_{\Phi}$ are contractible $NX$ is homotopy cartesian.
\end{cor}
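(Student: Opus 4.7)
The plan is to chain two equivalences: Theorem $B^I$ for the homotopy limit of the punctured cube, and the classical Quillen's Theorem $B$ applied to the map $m_\emptyset$ itself.

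First I would observe that the total homotopy fiber of $NX$ over $N\Phi$ is by definition the homotopy fiber of
\[ NX_\emptyset \longrightarrow \holim_{\mathcal{P}_0(n)} NX|_{\mathcal{P}_0(n)} \]
over $N\Phi$. The restriction $X|_{\mathcal{P}_0(n)}$ is a diagram on a category with finite-dimensional nerve, and it is still Reedy quasi-fibrant: for every $U \in \mathcal{P}_0(n)$ the under category $(U\unders \mathcal{P}_0(n))$ coincides with $(U\unders \mathcal{P}(n))$ since $\emptyset$ sits below $U$, so the slice condition at $U$ for the restriction is exactly the slice condition at $U$ for $X$. Hence Theorem $B^I$ applies and yields a weak equivalence
\[ N\Hom\big(\mathcal{P}_0(-),X|_{\mathcal{P}_0(n)}\big)\stackrel{\simeq}{\longrightarrow}\holim_{\mathcal{P}_0(n)} NX|_{\mathcal{P}_0(n)}, \]
where we use that the slice $(\emptyset\unders \mathcal{P}(n))/_U$ is precisely $\mathcal{P}_0(U)$.

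Next I would factor the canonical map through $m_\emptyset$:
\[ NX_\emptyset \stackrel{Nm_\emptyset}{\xrightarrow{\hspace{1.1cm}}} N\Hom\big(\mathcal{P}_0(-),X|_{\mathcal{P}_0(n)}\big) \stackrel{\simeq}{\longrightarrow} \holim_{\mathcal{P}_0(n)} NX|_{\mathcal{P}_0(n)}. \]
Since the second map is an equivalence, the homotopy fiber of the composite over $N\Phi$ is equivalent to the homotopy fiber of $Nm_\emptyset$ over $\Phi$. At this point I would invoke Quillen's Theorem $B$ for the functor $m_\emptyset$: its hypothesis is exactly that $m_\emptyset/_{(-)}$ sends every morphism to a weak equivalence, and this is precisely the Reedy quasi-fibrancy condition imposed on $X$ at the object $\emptyset$. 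Quillen's Theorem $B$ then identifies the homotopy fiber of $Nm_\emptyset$ over $\Phi$ with the nerve of $m_\emptyset/_\Phi$, which is the desired conclusion.

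The second assertion is immediate: if all the over categories $m_\emptyset/_\Phi$ are contractible, then every total homotopy fiber of $NX$ is contractible, so $NX$ is homotopy cartesian. The only non-routine step is the verification that the Reedy quasi-fibrancy of $X$ on $\mathcal{P}(n)$ specializes correctly both to the restriction on $\mathcal{P}_0(n)$ (to feed into Theorem $B^I$) and to the condition at $\emptyset$ (to feed into the classical Theorem $B$). Both specializations are essentially tautological once one notes that the under category of a non-initial $U\in\mathcal{P}(n)$ inside $\mathcal{P}_0(n)$ agrees with the one inside $\mathcal{P}(n)$, so there is no real obstacle and the proof is a short bookkeeping argument on top of Theorem $B^I$.
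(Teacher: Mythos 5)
Your proposal is correct and follows essentially the same route as the paper's proof: restrict to the punctured cube, check Reedy quasi-fibrancy passes to the restriction, apply Theorem $B^I$ to replace the homotopy limit by $N\Hom\big(\mathcal{P}_0(-),X_{\emptyset<}\big)$, and then apply classical Quillen's Theorem $B$ to $m_\emptyset$, whose hypothesis is the quasi-fibrancy condition at $\emptyset$. The only difference is that you spell out the (tautological) verification of the fibrancy conditions that the paper dismisses with ``clearly''.
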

\begin{proof}
Let us recall that the total homotopy fiber of $NX$ over $N\Phi$ is the homotopy fiber
\[\hofib_\Phi\big(NX_{\emptyset}\longrightarrow \holim_{\mathcal{P}_0(n+1)}NX_{\emptyset <}\big)\]
over the element in the homotopy limit defined by $N\Phi$.
Clearly the restriction $X_{\emptyset <}$ of $X$ to $\mathcal{P}_0(n+1)$ is also Reedy quasi-fibrant, and by Theorem $B^I$ the total homotopy fiber is equivalent the homotopy fiber of
\[\hofib_\Phi\big(NX_{\emptyset}\stackrel{Nm_{\emptyset}}{\longrightarrow} N\Hom\big(\mathcal{P}_0(-),X_{\emptyset <}\big)\big)\]
Since $m_{\emptyset}/_{(-)}$ also sends all maps to equivalences, this is equivalent to $Nm_{\emptyset}/_{\Phi}$ by Quillen's Theorem $B$.
\end{proof}

\subsection{The equivariant Quillen Theorem $B$}\label{secBGI}

Let $G$ be a discrete group acting on a category $I$. We generalize the results of the previous section to $G$-homotopy limits of $G$-diagrams of categories $X\colon I\to Cat$. As in \S\ref{secBI} it is going to be convenient to work with diagrams of simplicial sets instead of diagrams of topological spaces. The only difference is that one needs to perform the suitable fibrant replacements. We say that a morphism of $G$-diagrams of simplicial sets $f\colon Z\to Y$ is an equivalence if is an equivalence of $G$-diagrams of topological spaces (as defined in \ref{defeqGdiag}) after taking geometric realizations.

\begin{defn}\label{defHholimsset}
The $G$-homotopy limit of a $G$-diagram of simplicial sets $Y\colon I\to sSet$ is the $G$-simplicial set of natural transformations
\[\holim_IY=\Hom\big(N(I/_{(-)}),FY\big)\]
where $FY$ is a $G$-diagram of simplicial sets with an equivalence $Y\stackrel{\simeq}{\to}FY$, with the property that $(FY)_i$ is a fibrant $G_i$-simplicial set for every object $i$ in $I$.
\end{defn}

It is proved in \cite[2.6]{Gdiags} that such a replacement $Y\stackrel{\simeq}{\to}FY$ always exists, and that the $G$-homotopy limit functor preserves equivalences of $G$-diagrams of simplicial sets. As simplicial mapping spaces with fibrant target commute with geometric realizations, there is a $G$-equivalence $|\holim_IY|\simeq\holim_I|Y|$ relating this construction with the $G$-homotopy limit of \S\ref{secGdiags}.

If $K,X\colon I\to Cat$ are two $G$-diagrams of categories, the category of natural transformations $\Hom(K,X)$ has an induced $G$-action by conjugation, and the isomorphism of simplicial sets $N\Hom(K,X)\cong \Hom(NK,NX)$ is $G$-equivariant. Composing this isomorphism with a fibrant replacement of $X$ leads to a $G$-equivariant map
\[N\Hom\big(I/_{(-)},X\big)\longrightarrow \holim_I NX\]
We extend the quasi-fibrancy condition \ref{quasiReedy} to the equivariant setting, and we show that this map is an equivalence.
Suppose that $I$ is left-finite. The fixed point categories $I^H$ are automatically left-finite for every subgroup $H$ of $G$. For every object $i$ of $I^H$, the under category $i/I$ has an action of $H$, that restricts to the subcategory $i\unders I$.
The restriction of a $G$-diagram $X\colon I\to Cat$ to $i\unders I$ has a canonical structure of $H$-diagram, and the functor $m_i\colon X_i\rightarrow \Hom\big((i\unders I)/_{(-)},X_{i<}\big)$ of Definition \ref{quasiReedy} is $H$-equivariant. Let
\[m^{H}_i\colon X^{H}_i\longrightarrow \Hom\big((i\unders I)/_{(-)},X_{i<}\big)^{H}\] be its restriction to the categories of fixed points.

\begin{defn}\label{defGreedy}
A $G$-diagram of categories $X\colon I\to Cat$ is $G$-Reedy quasi-fibrant if for every subgroup $H$ of $G$ and every object $i$ of $I^H$ the functor $m^{H}_i/_{(-)}$
sends every morphism to a weak equivalence of categories.
\end{defn}

\begin{theoremBIG}
Let $I$ be a left finite category with $G$-action, and let $X\colon I\to Cat$ be a $G$-diagram of categories. If $X$ is $G$-Reedy quasi-fibrant, there is a weak $G$-equivalence
\[\holim_{n\in \mathbb{N}^{op}}N\Hom\big((I_{\leq n})/_{(-)},X_{\leq n}\big)\stackrel{\simeq}{\longrightarrow}\holim_I NX\]
In particular if the nerve of $I$ is finite dimensional the map $N\Hom\big(I/_{(-)},X\big)\stackrel{\simeq}{\longrightarrow}\holim_I NX$
is a weak $G$-equivalence of simplicial $G$-sets.
\end{theoremBIG}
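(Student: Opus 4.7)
The plan is to run the proof of Theorem $B^I$ equivariantly, mimicking every step with care for the $G$-action. Since a $G$-equivalence of simplicial $G$-sets is detected on $H$-fixed points for each subgroup $H$ of $G$, I aim to verify that the comparison map is a weak equivalence on every such fixed point subspace. Rather than trying to reduce the problem to non-equivariant Theorem $B^I$ applied to $X^H\colon I^H\to Cat$---whose Reedy quasi-fibrancy is not automatically implied by Definition \ref{defGreedy}, because the latter constrains the slices of $m_i^H$ rather than of the matching functor of $X^H$---I re-run the induction of Theorem $B^I$ inside the category of simplicial $G$-sets. The degree filtration $I_{\leq 0}\subset I_{\leq 1}\subset\dots$ is already $G$-invariant, and the reduction from the infinite- to the finite-dimensional case transfers verbatim: the inclusions $\iota_n/_{(-)}\to I_{\leq n}/_{(-)}$ of diagrams are $G$-cofibrations in the $G$-projective model structure of \cite{Gdiags}, so the tower on the homotopy limit side consists of $G$-equivariant Kan fibrations whose $G$-homotopy limit coincides with the categorical limit, and hence with $\holim_I NX$.

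It suffices then to prove, by induction on $n$, that for every $G$-invariant subset $U$ of $I_n$ the map
\[N\Hom\big((U\!\leq\! I)/_{(-)},X_{U\leq}\big)\longrightarrow \holim_{U\leq I}NX_{U\leq}\]
is a $G$-equivalence. The base case $n=0$ reduces to the observation that $U\!\leq\! I$ is a discrete category on the $G$-set $U$, so both sides become $G$-indexed products $\prod_{u\in U}NX_u$ and $\prod_{u\in U}FNX_u$; $H$-fixed points of such products distribute over the $H$-orbits of $U$ as in Proposition \ref{resmapF}, so they send pointwise $G_u$-equivalences to $H$-equivalences.

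For the inductive step with $V\subset I_{n+1}$ a $G$-invariant subset, Lemma \ref{indgrot} upgrades to a $G$-equivariant isomorphism of categories in which $G$ permutes the factors of $F_V(\Phi)=\prod_{v\in V}m_v/_{\Phi|_{v\unders I}}$ in agreement with its action on $V$ and the $G$-structures on $\Phi$ and $X$. After nerving, the comparison map sits in a morphism between two $G$-equivariant Kan fibration sequences whose base terms involve the subdiagram $V\!<\! I$. Fixing a subgroup $H$ of $G$ and an $H$-fixed basepoint $\Phi$, the base map on $H$-fixed points is an equivalence by the inductive hypothesis applied to the $H$-invariant subset $U':=(V\!<\! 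I)\cap I_n$ of $I_n$ (since $V\!<\! I=U'\!\leq\! I$), and the problem reduces to comparing $H$-fixed fibers.

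The main obstacle is exactly this fiber comparison, and it is the place where the $G$-Reedy quasi-fibrancy hypothesis enters. The $H$-fixed fiber of the left-hand fibration over $\Phi$ is
\[NF_V(\Phi)^H \;\cong\; \prod_{[v]\in V/H} N\big(m_v^{H_v}/_{\Phi|_{v\unders I}}\big),\]
where $[v]$ runs over $H$-orbits of $V$ with representatives of stabilizer $H_v\leq H$, using that fixed points commute with both nerves and $G$-indexed products; the $H$-fixed fiber of the right-hand fibration is a product of total homotopy fibers indexed in the same way. For each orbit representative $v$, the comparison factors as
\[N\big(m_v^{H_v}/_{\Phi|_{v\unders I}}\big)\longrightarrow \hof_{\Phi|_{v\unders I}}\!\big(NX_v^{H_v}\to N\Hom\big((v\!<\!I)/_{(-)},X_{v<}\big)^{H_v}\big)\longrightarrow \hof_{\Phi|_{v\unders I}}\!\big(NX_v\to\holim NX_{v<}\big)^{H_v}.\]
The first map is an equivalence by applying the non-equivariant Quillen Theorem $B$ to the functor $m_v^{H_v}$, whose slice hypothesis is exactly Definition \ref{defGreedy}; the second is an equivalence by the inductive hypothesis on $H_v$-fixed points applied to the diagram on $v\!<\! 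I$. Composing these completes the inductive step and hence the proof.
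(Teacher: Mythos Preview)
Your argument follows the paper's approach closely and is largely correct, but there is one genuine gap in how you set up the induction. You state the inductive hypothesis only for \emph{$G$-invariant} subsets $U\subset I_n$, concluding a $G$-equivalence. This suffices for the base-map comparison, since $U'=(V\!<\!I)\cap I_n$ is indeed $G$-invariant when $V$ is. However, in the fiber comparison you invoke ``the inductive hypothesis on $H_v$-fixed points applied to the diagram on $v\!<\!I$'' for a single orbit representative $v$. The category $v\!<\!I$ corresponds to a subset that is only $G_v$-invariant, not $G$-invariant, so your stated hypothesis does not apply to it directly.

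The paper avoids this by formulating the inductive claim for \emph{arbitrary} subsets $U\subset I_n$, with the conclusion being a weak $G_U$-equivalence; then for any $v\in V$ one has $G_v\leq G_U$ for the relevant $U$, and restriction gives the needed $G_v$-equivalence on $v\!<\!I$. You can patch your version either by adopting this stronger inductive statement, or by noting that $V\!<\!I=\coprod_{v\in V} v\!<\!I$ and hence the comparison map for $V\!<\!I$ is a $G$-indexed product of the maps for the individual $v\!<\!I$; the $H$-fixed points of this product decompose as $\prod_{[v]\in V/H}(\text{map}_v)^{H_v}$, and since all factors are non-empty (they contain the restriction of $\Phi$), the product being a weak equivalence forces each factor to be one. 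Either fix is routine, but as written the appeal to the inductive hypothesis at that step is not justified.
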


Let $J$ be a finite $G$-set. The following result is our equivariant generalization of Quillen Theorems' $A$ and $B$.

\begin{cor}\label{GQuillen}
Let $X\colon \mathcal{P}(J)\to Cat$ be a $G$-Reedy quasi-fibrant $J$-cube of categories. For every natural transformation $\Phi\colon \mathcal{P}_0(-)\to X_{\emptyset<}$ the nerve of the category $m_{\emptyset}/_{\Phi}$ is $G_{\Phi}$-equivalent to the total homotopy fiber of the cube $NX$ over $N\Phi$. In particular if all the categories $m_{\emptyset}/_{\Phi}$ are $G_{\Phi}$-contractible, $BX$ is a homotopy cartesian $J$-cube of spaces.
\end{cor}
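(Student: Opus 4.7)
The plan is to mimic the proof of Corollary \ref{higherQ} equivariantly, using Theorem $B_G^I$ in place of Theorem $B^I$ and a $G_\Phi$-equivariant Quillen Theorem $B$ in place of the classical one. First I would restrict attention to the stabiliser $G_\Phi$. Since $\emptyset\in\mathcal{P}(J)$ is $G$-fixed, $X_{\emptyset<}$ is a $G$-diagram on $\mathcal{P}_0(J)$, and the $G$-Reedy quasi-fibrancy of $X$ restricts to any subgroup; in particular $X_{\emptyset<}$ is $G_\Phi$-Reedy quasi-fibrant. Since $J$ is finite, the nerve of $\mathcal{P}_0(J)$ is finite-dimensional, so Theorem $B_G^I$ provides a $G_\Phi$-equivalence
\[
N\Hom\bigl(\mathcal{P}_0(-),X_{\emptyset<}\bigr)\stackrel{\simeq}{\longrightarrow}\holim_{\mathcal{P}_0(J)}NX_{\emptyset<}
\]
through which $Nm_\emptyset$ factors as the canonical comparison used to define the total homotopy fibre of $NX$. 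Because the $G_\Phi$-homotopy fibre at $N\Phi$ is invariant under $G_\Phi$-equivalences between $G_\Phi$-spaces over $N\Phi$, the total homotopy fibre of $NX$ over $N\Phi$ is $G_\Phi$-equivalent to $\hofib_{N\Phi}(Nm_\emptyset)$.

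Second I would establish a $G_\Phi$-equivariant Quillen Theorem $B$ for $m_\emptyset\colon X_\emptyset\to\Hom(\mathcal{P}_0(-),X_{\emptyset<})$ at the object $\Phi$, identifying the remaining fibre with $N(m_\emptyset/_\Phi)$. I would check the canonical comparison $N(m_\emptyset/_\Phi)\to \hofib_{N\Phi}(Nm_\emptyset)$ on $H$-fixed points for every $H\leq G_\Phi$. On the source, fixed points commute with the over-category construction and with nerves, so $(m_\emptyset/_\Phi)^H=m_\emptyset^H/_\Phi$. On the target, after an appropriate fibrant replacement of $\Hom(\mathcal{P}_0(-),X_{\emptyset<})$ as a $G$-simplicial set, $H$-fixed points commute with homotopy fibres, producing $\hofib_\Phi(Nm_\emptyset^H)$. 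The $G$-Reedy quasi-fibrancy hypothesis says exactly that $m_\emptyset^H/_{(-)}$ sends morphisms to weak equivalences, so the ordinary Quillen Theorem $B$ applies to $m_\emptyset^H$ and identifies the two sides. Composing this $G_\Phi$-equivalence with the one from the previous paragraph gives the first claim.

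For the final assertion, a $G$-map of $G$-spaces is a $G$-equivalence iff every homotopy fibre over every $H$-fixed basepoint (for every $H\leq G$) is contractible. Any $H$-fixed basepoint of $\holim_{\mathcal{P}_0(J)}BX$ can, via Proposition \ref{twisted} and the fibrancy setup, be represented by a natural transformation $\Phi\colon\mathcal{P}_0(-)\to X_{\emptyset<}$ with $H\leq G_\Phi$, and its homotopy fibre is the total homotopy fibre of $BX$ over $B\Phi$, which by the first claim is $H$-equivalent to $B(m_\emptyset/_\Phi)^H$. Under the hypothesis that every $m_\emptyset/_\Phi$ is $G_\Phi$-contractible, these are all contractible, so $BX_\emptyset\to\holim_{\mathcal{P}_0(J)}BX$ is a $G$-equivalence and $BX$ is homotopy cartesian.

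The main obstacle will be the fibrant-replacement bookkeeping in the second step: the identity $(\hofib f)^H=\hofib(f^H)$ is not strictly true but requires replacing the target by a fibrant $G$-simplicial set in a way compatible with the over-category description of the source. The cleanest route is probably to realise $Nm_\emptyset$ as a restriction along a cofibration of $G$-diagrams of simplicial sets into a pointwise fibrant $G$-diagram, in the spirit of Proposition \ref{resfixespts} and the Kan-fibration arguments used inside the proof of Theorem $B^I$, so that the induced map on sections is a genuine $G$-Kan fibration whose point fibres commute with fixed points.
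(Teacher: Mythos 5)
Your proposal is correct and follows essentially the same route as the paper, whose proof simply invokes Theorem $B_G^I$ together with the argument of Corollary \ref{higherQ}; the fixed-point reduction of the equivariant Quillen Theorem $B$ step to the classical one is exactly the argument already carried out inside the proof of Theorem $B_G^I$. The fibrant-replacement bookkeeping you flag is handled there in the way you anticipate, by exhibiting the relevant restriction maps as $G$-Kan fibrations whose point fibres commute with fixed points.
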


\begin{proof}
It is immediate from Theorem $B^{I}_G$, using the argument of \ref{higherQ}.
\end{proof}

The proof of Theorem $B_{G}^I$ is based on the same inductive argument in the proof of Theorem $B^I$. The key ingredient for the induction step is an equivariant analogue of Lemma \ref{indgrot}. If $Y\colon I\to Cat$ is a $G$-diagram of categories, recall from \ref{GThom} that its Grothendieck construction $I\wr Y$ has an induced $G$-action.
Given a subset $U\subset I_n$, the $G$-action on $I$ induces a $G_U$-action on the categories $U\leq I$ and $U<I$, where $G_U$ is the subgroup of $g$ of elements that send $U$ to itself. The functor $F_U\colon \Hom\big((U\unders I)/_{(-)},X_{U<}\big)\to Cat$ from Lemma \ref{indgrot} that sends $\Phi\colon (U\unders I)/_{(-)}\to X_{U<}$ to 
\[F_U(\Phi)=\prod_{u\in U}(m_u)/_{(\Phi|_{u<I})}\]
has a canonical $G_U$-structure. It is defined by conjugating the $G_U$-action on $U$ indexing the product with the functors
\[\xymatrix{(m_u)/_{(\Phi|_{u<I})}\ar[r]\ar@{-->}[d]&X_u\ar[r]\ar[d]^{g}&\Hom\big((u\unders I)/_{(-)},X_{u<}\big)\ar[d]^g\\
(m_u)/_{(g\Phi|_{u<I})}\ar[r]&X_u\ar[r]&\Hom\big((u\unders I)/_{(-)},X_{u<}\big)
}\]
Hence the Grothendieck construction of $F_U$ inherits a $G_U$-action. The following is immediate.
\begin{lemma}
For every subset $U\subset I_n$, the isomorphism of categories
\[\Hom\big((U\!\leq\! I)/_{(-)},X_{U\leq}\big)\cong \Big(\Hom\big((U\unders I)/_{(-)},X_{U<}\big)\mathlarger{\mathlarger{\wr}} F_U\Big)\]
of Lemma \ref{indgrot} is $G_U$-equivariant.
\end{lemma}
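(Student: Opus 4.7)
The plan is simply to verify that the bijection between objects constructed in Lemma \ref{indgrot}, and its extension to morphisms, intertwines the two $G_U$-actions. I would begin by unpacking the two actions explicitly. On the left-hand side, an element $g \in G_U$ sends a natural transformation $\Psi\colon (U\!\leq\! I)/_{(-)}\to X_{U\leq}$ to the transformation whose component at $g\alpha\colon gu\to gi$ is the composite using $\phi_g\colon X_i\to X_{gi}$, the restriction $\Psi_\alpha$, and the functor $g_\ast\colon (U\!\leq\! I)/_{g\alpha}\to (U\!\leq\! I)/_\alpha$. On the right-hand side, the $G_U$-action on the Grothendieck construction is the composite of the conjugation action on $\Hom\big((U\unders I)/_{(-)},X_{U<}\big)$ and the structure maps of $F_U$ as displayed in the diagram preceding the lemma, which in particular permute the factors of $\prod_{u\in U}(m_u)/_{\Phi|_{u<I}}$ along the $G_U$-action on $U$.

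Next I would check equivariance on the three pieces of data that the bijection extracts from $\Psi$. The natural transformation $\Phi = \Psi|_{U< I}$ is obtained by restriction along the $G_U$-stable inclusion $U\unders I\subset U\!\leq\! I$, so it commutes with conjugation by $g$. The family of objects $x_u = \Psi_{\id_u}$ transforms as $x_u \mapsto \phi_g(x_u) = (g\Psi)_{\id_{gu}}$, which matches the product-with-permutation action of $G_U$ on $\prod_{u\in U}X_u$ (and hence on the fiber of $F_U$). The comparison morphisms $\gamma_\alpha\colon \alpha_\ast x_u \to \Phi_\alpha$ are defined as $\Psi_\alpha$ applied to a canonical morphism in $(U\!\leq\! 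I)/_\alpha$ built only from identity maps and factorizations in $I$; since these data are $G$-natural and $\phi_g$ commutes with $\alpha_\ast$ (by the axioms of a $G$-structure), the image of $\gamma_\alpha$ under the left action coincides with the composite obtained by first transporting $\Phi$ to $g\Phi$ and then reading off the comparison map for $g\alpha$.

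The extension of the bijection to morphisms, which is given by a pair $(\lambda,\underline{f})$ of compatible transformations, is handled in exactly the same manner: both components are defined via restriction and evaluation at identity morphisms, each of which is manifestly $G_U$-equivariant. The only item that requires any care is the bookkeeping of the permutation of the product $\prod_{u\in U}$ under the action of $G_U$ on $U$, but this is precisely built into the definition of the $G_U$-structure on $F_U$. With this verified, the isomorphism and its inverse both commute with $g$ for every $g\in G_U$, which is the claim. I do not expect any genuine obstacle; the statement is a direct unwinding of definitions, which is why the excerpt labels it as immediate.
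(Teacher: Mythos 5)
Your verification is correct and is precisely the argument the paper intends: the paper offers no written proof beyond declaring the lemma immediate, and your explicit check that the object-level bijection on the data $(\Phi,\underline{x},\underline{\gamma})$ (and its extension to morphisms) intertwines the conjugation action on natural transformations with the permuted-product $G_U$-structure on $F_U$ is exactly that unwinding of definitions. The only cosmetic point is that the functor you call $g_\ast\colon (U\!\leq\! I)/_{g\alpha}\to (U\!\leq\! I)/_{\alpha}$ is the structure map $\phi_{g^{-1}}$ of the $G$-structure on the over-category diagram, but your direction and usage are correct.
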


\begin{proof}[Proof of Theorem $B_{G}^I$]
For every group element $g$ of $G$, the automorphism $g$ of $I$ induces an isomorphism of categories $g\colon i/I\to gi/I$. It follows that the nerves $N(i/I)$ and $N(gi/I)$ have the same dimension, and that the degree function $\deg\colon Ob I\to\mathbb{N}$ is $G$-invariant. Hence the $G$-action restricts to the filtration
\[I_{\leq 0}\subset I_{\leq 1}\subset\dots\subset I_{\leq n}\subset\dots\subset I\]
and the $G$-structure on $X\colon I\to Cat$ restricts to a $G$-structure on $X_{\leq n}\colon I_{\leq n}\to Cat$.
Let $NX\stackrel{\simeq}{\to} FNX$ be a pointwise fibrant replacement of $NX$, like in Definition \ref{defHholimsset}. We prove by induction on $n$ that for every subset $U\subset I_n$ the map
\[N\Hom\big((U\!\leq\! I)/_{(-)},X_{U\leq}\big)\cong \Hom\big(N(U\!\leq\! I)/_{(-)},NX_{U\leq}\big)\longrightarrow\holim_{U\leq I}(FNX)_{U\leq}\]
is a weak $G_U$-equivalence. Once this is established, the same argument in the proof of Theorem $B^I$ finishes the proof of $B_{G}^I$.

For $n=0$, the category $U\leq I$ is discrete and the map above is the map of indexed products
\[\prod\limits_{u\in U}NX_u\longrightarrow \prod\limits_{u\in U}FNX_u\]
The fixed points of this map by a subgroup $H\leq G_U$ is isomorphic to the map
\[\prod\limits_{[u]\in U/H}NX_{u}^{H_{u}}\longrightarrow \prod\limits_{[u]\in U/H}FNX_{u}^{H_{u}}\]
for a choice of representatives in each $H$-orbit of $U$, where $H_u$ is the stabilizer group of $u$ in $H$. Each map $NX_{u}^{H_{u}}\to FNX_{u}^{H_{u}}$ is an equivalence of simplicial sets by assumption, and the map above is an equivalence.

Now suppose that the claim is true for $n$, and let $V$ be a subset of $I_{n+1}$. The sequence
\[NF_{V}(\Phi)\longrightarrow N\big(\Hom\mathlarger{\mathlarger{\wr}} F_V\big)\cong N\Hom\big((V\!\leq\! I)/_{(-)},X_{V\leq}\big)\longrightarrow
N\Hom\big((V\!<\! I)/_{(-)},X_{V<}\big)\]
induced by the restriction map is a fiber sequence of simplicial $G_V$-sets. This is because its restriction on fixed points of a subgroup $H\leq G_V$ is the sequence
\[NF_{V}(\Phi)^H\longrightarrow N\big(\Hom\mathlarger{\mathlarger{\wr}} F_V\big)^ H\cong N \big(\Hom^H\mathlarger{\mathlarger{\wr}} F^{H}_V\big)\longrightarrow
N\Hom\big((V\!<\! I)/_{(-)},X_{V<}\big)^H\]
where the functor $F^{H}_V\colon N\Hom\big((V\!<\! I)/_{(-)},X_{V<}\big)^H\to Cat$ sends an $H$-equivariant natural transformation $\Phi$ to
\[F^{H}_V(\Phi)=\Big(\prod_{v\in V}(m_v)/_{(\Phi|_{v<I})}\Big)^H\cong \prod\limits_{[v]\in V/H}m^{H_v}_v/_{(\Phi|_{v<I})}\]
By assumption  $m^{H_v}_v/_{(-)}$ sends every morphism to a weak equivalence, and thus so does $F^{H}_V$. It follows by Lemma \cite[p.98]{Quillen} and \cite{Thomason} that $NF^{H}_V$ is indeed the homotopy fiber of the restriction map. The restriction map
\[\holim NX_{V\leq}\longrightarrow \holim NX_{V<}
\]
is a fibration of simplicial $G$-sets by an argument analogous to the one in the proof of Theorem $B^I$. Its fiber is the product of homotopy fibers $\prod\limits_{v\in V}\hof_{\Phi|_{v<I}}\big( NX_v\to \holim NX_{v<}\big)$. Therefore it is sufficient to show that the map on homotopy fibers
\[NF_V(\Phi)\longrightarrow \prod\limits_{v\in V}\hof_{\Phi|_{v<I}}\big( NX_v\to \holim NX_{v<}\big)\]
is a $G_V$-equivalence. By taking fixed points, this is the case if for every $v\in V$ the map
\[Nm_v/_{(\Phi|_{v<I})}\longrightarrow\hof_{\Phi|_{v<I}}\big( NX_v\to \holim NX_{v<}\big)\]
is a $G_v$-equivalence. This map factors as
\[Nm_v/_{(\Phi|_{v<I})}\to\hof_{\Phi|_{v<I}}\Big( NX_v\to N\Hom\big((v\!<\!I)/_{(-)},X_{v<}\big)\Big)\to \hof_{\Phi|_{v<I}}\big( NX_v\to \holim NX_{v<}\big)\]
The first map is a $G_v$-equivalence, since $m^{H}_v/_{(-)}$ sends every morphism to a weak equivalence of categories for every subgroup $H$ of $G_v$. The second map is also a $G_v$ equivalence, as the map $N\Hom\big((v<I)/_{(-)},X_{v<}\big)\to\holim NX_{v<}$ is a $G_v$-equivalence by the inductive hypothesis.
\end{proof}

\appendix
\section{Appendix}
\subsection{Connectivity of homotopy limits}

We prove a result about the connectivity of the space of natural transformations between two diagrams of spaces. This result was used in Proposition \ref{resfixespts} to calculate the connectivity of the restriction map on $G$-homotopy limits. Let $G$ be a discrete group acting on a small category $I$.

\begin{prop}\label{connfixedptsholim}
Let $K\colon I\to \Top_\ast$ be a $G$-diagram of pointed spaces, cofibrant in the model structure of \cite[2.6]{Gdiags}. Suppose that for every object $i$ of $I$ the simplicial set $NI/_i$ is finite dimensional, and that $K_i$ is a $G_i$-$CW$-complex. Then for every $G$-diagram of pointed spaces $X\colon I\to \Top_\ast$, the $G$-fixed points of the space of natural transformations $\Hom_\ast(K,X)^G$ is
\[\min_{i\in \Ob I}\min_{\substack{H\leq G_i\\ K^{H}_i\neq\ast}}\big(\conn X^{H}_i-\dim K_{i}^H\big)\]
connected.
\end{prop}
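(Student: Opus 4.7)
The plan is to mount an obstruction-theoretic argument based on a cell filtration of $K$ as a cofibrant object in the $G$-projective model structure on $\Top_{a,\ast}^I$ from \cite[2.6]{Gdiags}. Use the degree function $\deg i=\dim N(I/_i)$ to filter $I$ by the subcategories $I_{\leq d}\subset I$ of objects of degree at most $d$, and correspondingly filter $K$ by sub-$G$-diagrams $K^{\leq d}$ generated by the vertices of degree at most $d$. The inclusion $K^{\leq d-1}\hookrightarrow K^{\leq d}$ is built by attaching, for each representative $i$ of a $G$-orbit of degree-$d$ objects, the relative latching map $L_i K\hookrightarrow K_i$. Because $K$ is cofibrant this latching map is a cofibration of pointed $G_i$-spaces, and since $K_i$ is a $G_i$-$CW$-complex the pair $(K_i,L_i K)$ admits a relative $G_i$-cellular decomposition built out of cells of the form $(G_i/H)_+\wedge D^n_+$ attached along $(G_i/H)_+\wedge S^{n-1}_+$, with $H\leq G_i$ and $0\leq n\leq \dim K_i^H$.

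Applying $\Hom_\ast(-,X)^G$ to this filtration turns each step into a fiber sequence of spaces. The crucial ingredient is that, by the left Kan extension adjunction, $\Hom_{\ast,I}\big(i_! A,X\big)^G\cong \map_\ast(A,X_i)^{G_i}$ for any pointed $G_i$-space $A$, together with the identity
\[\map_\ast\big((G_i/H)_+\wedge S^n,X_i\big)^{G_i}\cong \Omega^n X_i^H,\]
whose connectivity is $\conn X_i^H-n\geq \conn X_i^H-\dim K_i^H$. Iterating the fiber-sequence estimate through the $G_i$-cellular filtration of the pair $(K_i,L_i K)$ shows that the fiber of $\Hom_\ast(K^{\leq d},X)^G\to \Hom_\ast(K^{\leq d-1},X)^G$ coming from a single orbit representative $i$ is at least $\min_{H\leq G_i,\,K_i^H\neq \ast}\big(\conn X_i^H-\dim K_i^H\big)$ connected; only subgroups $H$ with $K_i^H\neq\ast$ can appear, since cells whose $H$-fixed part collapses to the basepoint contribute no obstruction to that fixed-point space.

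Assembling the fiber sequences over all $d$ yields the claimed bound, because the connectivity of an iterated homotopy fiber is at least the minimum of the connectivities of its individual fibers. The main technical obstacle is bookkeeping: one must check that at stage $d$ the fiber really decomposes as a product of contributions indexed by $G$-orbit representatives of degree-$d$ objects, which amounts to verifying that pushouts of $G$-diagrams of the form $i_!(-)$ interact with $\Hom_\ast(-,X)^G$ exactly through the $G_i$-equivariant mapping-space adjunction above, and that summing these contributions over the $G$-orbit of $i$ produces the desired product decomposition. Once this is in place, taking the double minimum over all degrees and all relevant subgroups reduces to the expression in the statement.
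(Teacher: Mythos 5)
Your argument is essentially the paper's own proof in dual packaging: the paper uses the same degree filtration $\deg i=\dim N(I/_i)$, the same reduction to latching cofibrations $L_iK\to K_i$ of pointed $G_i$-spaces at orbit representatives, the same $G_i$-cell decomposition with the computation $\map_\ast\big((G_i/H)_+\wedge S^n,X_i\big)^{G_i}\cong\Omega^nX_i^H$ and the dimension count $n\leq\dim K_i^H$, but phrases everything as extending a given map $S^k\to\Hom_\ast(K,X)^G$ over $D^{k+1}$ cell by cell rather than as a tower of fibrations obtained by applying $\Hom_\ast(-,X)^G$ to a filtration of $K$. The one point your packaging leaves implicit is that the degree function on $I$ need not be bounded, so your tower can be infinite and you should note that the $c$-connectivity of the fibers makes the tower $\{\pi_{c+1}\}$ Mittag-Leffler, killing the ${\lim}^1$ term in the Milnor sequence --- a step the paper's direct extension argument sidesteps, since there one only assembles a compatible family of extensions over the $I_{\leq d}$.
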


\begin{cor}\label{connholim}
Let $X\colon I\to \Top_\ast$ be a $G$-diagram of spaces and suppose that $NI/_i$ is finite dimensional for every object $i$ in $I$. Then the fixed points space $(\holim_IX)^G$ is
\[\min_{i\in\Ob I}\min_{H\leq G_i}\big(\conn X^{H}_i-\dim NI/_{i}^H\big)\]
connected. 
\end{cor}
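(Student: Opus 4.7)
The plan is to deduce this from Proposition \ref{connfixedptsholim} applied to the pointed $G$-diagram $K = B(I/-)_+$ obtained by adjoining a disjoint basepoint at each vertex of the $G$-diagram $B(I/-)$ of Example \ref{overcatGdiag}.

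First I would identify
\[(\holim_I X)^G = \Hom_I\big(B(I/-),X\big)^G \cong \Hom_\ast\big(B(I/-)_+, X\big)^G,\]
since a pointed natural transformation out of a disjoint-basepoint diagram is the same data as an unpointed one, and this identification is $G$-equivariant. This recasts the problem as a connectivity estimate on the fixed points of a pointed mapping space, which is exactly the setting of Proposition \ref{connfixedptsholim}.

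Next I would verify the hypotheses of that proposition for $K = B(I/-)_+$. The simplicial sets $NI/_i$ are finite dimensional by assumption, and each vertex $B(I/i)_+$ is a $G_i$-CW-complex since the $G_i$-action on $N(I/i)$ is simplicial. Cofibrancy of $B(I/-)_+$ as a pointed $G$-diagram reduces to the cofibrancy of $B(I/-)$ in the unpointed $G$-projective model structure of \cite[2.6]{Gdiags}: this is precisely what the relative latching map argument in the proof of Proposition \ref{resfixespts} establishes, and adjoining disjoint basepoints preserves it.

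Finally I would compute the data entering the minimum. Since geometric realization commutes with taking $H$-fixed points on simplicial sets, and since $(I/i)^H = I^H/i$ whenever $i \in I^H$, one has
\[K_i^H = \big(B(I/i)^H\big)_+ = B(I^H/i)_+,\]
which always strictly contains the basepoint because $\id_i$ is an object of $I^H/i$. Thus the side condition $K_i^H \neq \ast$ in Proposition \ref{connfixedptsholim} is automatic for every $H \leq G_i$, and $\dim K_i^H = \dim NI^H/_i$. Substituting these values into the connectivity bound of Proposition \ref{connfixedptsholim} produces exactly the formula claimed. The only subtle point is the cofibrancy verification, which is fortunately already the content of the latching argument in the proof of Proposition \ref{resfixespts}; everything else is bookkeeping.
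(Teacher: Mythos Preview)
Your proposal is correct and follows essentially the same approach as the paper: apply Proposition \ref{connfixedptsholim} to $K=B(I/_{(-)})_+$, using the identification $\Hom_\ast(B(I/_{(-)})_+,X)\cong\Hom(B(I/_{(-)}),X)=\holim_IX$ and the computation $K_i^H=B(I^H/_i)_+$. The only minor difference is that the paper cites \cite[2.19]{Gdiags} directly for cofibrancy of $B(I/_{(-)})$, whereas you invoke the latching argument from the proof of Proposition \ref{resfixespts}; note that the latter literally treats the relative map $B(\iota/_{(-)})\to B(I/_{(-)})$, so to get cofibrancy of $B(I/_{(-)})$ itself you should run that argument with $\iota/_{(-)}$ replaced by the empty diagram, which goes through for the same reason.
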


\begin{rem}
For the trivial group $G=1$, this Corollary shows that the homotopy limit of a diagram of pointed spaces $X\colon I\to \Top_\ast$ is
\[\min_{i\in\Ob I}\big(\conn X_i-\dim NI/_{i}\big)\]
connected.
A similar statement can easily be deduced for diagrams of unpointed spaces, provided the homotopy limit is non-empty. This result seems to be well known by the experts, but the author was not able to find a proof in the literature.
\end{rem}

\begin{proof}[Proof of \ref{connholim}]
The $G$-diagram $NI/_{(-)}$ is cofibrant in $\Top_{a}^{I}$ by \cite[2.19]{Gdiags}. Therefore the $G$-diagram of pointed spaces $(NI/_{(-)})_+$ is cofibrant in $(\Top_{\ast})_{a}^{I}$, and Proposition \ref{connfixedptsholim} for the space of natural transformations of pointed functors
\[\Hom_\ast\big((NI/_{(-)})_+,X\big)^G=\Hom\big(NI/_{(-)},X\big)^G=(\holim_IX)^G\]
gives the formula of the statement.
\end{proof}

\begin{proof}[Proof of \ref{connfixedptsholim}]
For any pointed map $g\colon S^{k}\rightarrow \Hom_\ast(K,X)^G$, with $k$ smaller than the range of the statement, we need to build an extension of $g$ to the $(k+1)$-disc. By the adjunction between pointed mapping spaces and smash products, this is the same as solving the extension problem
\[\xymatrix{K\wedge D^{k+1}\ar@{-->}[r]^-{\widetilde{f}}& X\\
K\wedge S^{k}\ar[u]\ar[ur]_-{\widetilde{g}}
}\]
in the category of $G$-diagrams $(Top_{\ast})_{a}^I$. We define the extension $\widetilde{f}$ by induction on a filtration of the objects of $I$ induced by the degree function $\deg\colon ObI\rightarrow \mathbb{N}$ defined as the dimension of the over categories
\[\deg i=\dim NI/_i\]
This degree function is dual to the one used in Theorem $B^I$.
It is easy to verify that for any non-identity map $i\rightarrow j$ the inequality $\deg(i)<\deg(j)$ holds, and that the degree function is constant on $G$-orbits. This is an equivariant version of a directed Reedy category. For every positive integer $d$, define $I_{\leq d}$ to be the full subcategory of $I$ on objects of degree less than or equal to $d$, and $I_d$ the full subcategory of objects of degree $d$.
Notice that the $G$-action restricts to these categories, and that $I_d$ is a discrete category.

For $i$ of degree $-1$, the category $I_{\leq -1}$ is empty and $\widetilde{f}$ is the empty map.
Now suppose that $\widetilde{f}$ is defined as a natural transformation from the category $I_{\leq d-1}$. We start by defining $\widetilde{f}_i$ on representatives of the orbits of the $G$-action on $I_d$. Let $s\colon I_d/G\rightarrow I_d$ be a section for the quotient map, and let $z$ be an orbit in $I_d/G$.
By degree reasons, the only morphisms of $I_{\leq d}$ involving $s(z)$ are maps $j\rightarrow s(z)$ with $j$ in $I_{\leq d-1}$. In order to be compatible with $I_{\leq d-1}$ and to extend $g$, the map $\widetilde{f}_{s(z)}$ needs to satisfy the following extension problem in $Top_\ast$
\[\xymatrix{ K_{s(z)}\wedge S^{k}\ar[r]\ar[dr]_{\widetilde{g}_{s(z)}}&K_{s(z)}\wedge D^{k+1}\ar@{-->}[d]^-{\widetilde{f}_{s(z)}}&L_{s(z)}(K)\wedge D^{k+1}\ar[l]\ar[d]^-{\widetilde{f}|_{I_{\leq d-1}}}\\
&X_{s(z)}&L_{s(z)}(X)\ar[l]
}\]
Here $L_i(Z)$ is the $i$-latching space of a $G$-diagram $Z\in (Top_{\ast})_a^I$, with vertices $L_i(Z)=\colim_{j\stackrel{\neq\id}{\rightarrow} i}Z_j$.
The right-hand square expresses the compatibility of $\widetilde{f}_{s(z)}$ with the extension already defined on $I_{\leq d-1}$. The stabilizer group $G_{s(z)}$ acts on all the spaces of the diagram, and both horizontal maps are cofibrations of $G_i$-spaces by cofibrancy of $K$.
The extension problem above is equivalent to the extension problem of $G_{s(z)}$-spaces
\[\xymatrix{ L_{s(z)}(K)\wedge D^{k+1}\coprod\limits_{L_{s(z)}(K)\wedge S^k}K_{s(z)}\wedge S^k\ar[d]\ar[r]^-{}&X_{s(z)}\\
K_{s(z)}\wedge D^{k+1}\ar@{-->}[ur]_-{\widetilde{f}_{s(z)}}
}\]
and the vertical map is also a cofibration of pointed $G_i$-spaces. The extension $\widetilde{f}_{s(z)}$ can be defined inductively on the relative cells of the cofibration, provided that for any $G_i/H_+\wedge D^{n+1}$-cell the composition of $\widetilde{g}_{s(z)}$ with the attaching map $G_i/H_+\wedge S^{n}\rightarrow X_i$ is $G_i$-equivariantly null-homotopic. If $K_i\wedge D^{k+1}$ has a $G_i/H_+\wedge D^{n+1}$-cell, its fixed points space $K^{H}_i\wedge D^{k+1}$ has an $(n+1)$-cell, and by dimension reasons we must have
\[n+1\leq \dim K^{H}_i\wedge D^{k+1}=\dim K^{H}_i+k+1\leq \conn X_{i}^H+1\]
The last inequality holds as $k$ is smaller than the range of the statement. Thus $\pi_n X^{H}_i$ is trivial, and any map $(G_i/_H)_+\wedge S^{n}\rightarrow X_i$ is null-homotopic.

Now that $\widetilde{f}_{s(z)}$ is defined on the representatives of the $G$-orbits of $I_d$, we extend it to the rest of $I_d$ by defining
\[\widetilde{f}_i\colon K_i\wedge D^{k+1}\stackrel{g^{-1}}{\longrightarrow}K_{s[i]}\wedge D^{k+1}\stackrel{\widetilde{f}_{s[i]}}{\longrightarrow}X_{s[i]}\stackrel{g}{\longrightarrow}X_{i}\]
for a choice of $g$ in $G$ such that $gs[i]=i$. The map $\widetilde{f}_i$ does not depend on the choice of $g$ because $\widetilde{f}_{s[i]}$ is $G_i$-equivariant. Moreover the compatibility of $\widetilde{f}_{s[i]}$ with the maps $j\rightarrow s(z)$ insures that $\widetilde{f}$ is natural on $I_{\leq d}$. It is easy to verify that $\widetilde{f}$ extends $\widetilde{g}$, and that it is compatible with the $G$-structure.
\end{proof}

\bibliographystyle{amsalpha}
\bibliography{Gdiagsofspaces}

\end{document}